\newcommand{\kom}[1]{}
\renewcommand{\kom}[1]{{\bf [#1]}}
\newcommand\blfootnote[1]{%
  \begingroup
  \renewcommand\thefootnote{}\footnote{#1}%
  \addtocounter{footnote}{-1}%
  \endgroup
}
 \def\1{\raisebox{2pt}{\rm{$\chi$}}}
\newtheorem{theorem}{Theorem}[section]
\newtheorem{corollary}[theorem]{Corollary}
\newtheorem{lemma}[theorem]{Lemma}
\newtheorem{proposition}[theorem]{Proposition}
\newtheorem{definition}[theorem]{Definition}
\newtheorem{remark}[theorem]{Remark}
 \def\1{\raisebox{2pt}{\rm{$\chi$}}}
\def\vint_#1{\mathchoice%
          {\mathop{\kern 0.2em\vrule width 0.6em height 0.69678ex depth -0.58065ex
                  \kern -0.8em \intop}\nolimits_{\kern -0.4em#1}}%
          {\mathop{\kern 0.1em\vrule width 0.5em height 0.69678ex depth -0.60387ex
                  \kern -0.6em \intop}\nolimits_{#1}}%
          {\mathop{\kern 0.1em\vrule width 0.5em height 0.69678ex
              depth -0.60387ex
                  \kern -0.6em \intop}\nolimits_{#1}}%
          {\mathop{\kern 0.1em\vrule width 0.5em height 0.69678ex depth -0.60387ex
                  \kern -0.6em \intop}\nolimits_{#1}}}
\def\vintslides_#1{\mathchoice%
          {\mathop{\kern 0.1em\vrule width 0.5em height 0.697ex depth -0.581ex
                  \kern -0.6em \intop}\nolimits_{\kern -0.4em#1}}%
          {\mathop{\kern 0.1em\vrule width 0.3em height 0.697ex depth -0.604ex
                  \kern -0.4em \intop}\nolimits_{#1}}%
          {\mathop{\kern 0.1em\vrule width 0.3em height 0.697ex depth -0.604ex
                  \kern -0.4em \intop}\nolimits_{#1}}%
          {\mathop{\kern 0.1em\vrule width 0.3em height 0.697ex depth -0.604ex
                  \kern -0.4em \intop}\nolimits_{#1}}}
\newcommand{\kint}{\vint}
\newcommand{\aveint}[2]{\mathchoice%
          {\mathop{\kern 0.2em\vrule width 0.6em height 0.69678ex depth -0.58065ex
                  \kern -0.8em \intop}\nolimits_{\kern -0.45em#1}^{#2}}%
          {\mathop{\kern 0.1em\vrule width 0.5em height 0.69678ex depth -0.60387ex
                  \kern -0.6em \intop}\nolimits_{#1}^{#2}}%
          {\mathop{\kern 0.1em\vrule width 0.5em height 0.69678ex depth -0.60387ex
                  \kern -0.6em \intop}\nolimits_{#1}^{#2}}%
          {\mathop{\kern 0.1em\vrule width 0.5em height 0.69678ex depth -0.60387ex
                  \kern -0.6em \intop}\nolimits_{#1}^{#2}}}
\newcommand{\esssup}{\operatornamewithlimits{esssup}}
\newcommand{\diam}{\operatorname{diam}}
\numberwithin{equation}{section}
\definecolor{color1}{rgb}{0.309, 0.43,0.258}
\definecolor{color2}{rgb}{0.741, 0.502,0.743}
\definecolor{color3}{rgb}{0.580, 0.163,0.107}
\definecolor{color0}{rgb}{0.088, 0.717, 0.018}
\begin{document}

\title[]{$L^{p}$-estimates for the Hessians of solutions to fully nonlinear parabolic equations with oblique boundary conditions}

\author[Byun]{Sun-Sig Byun}
\address{Department of Mathematical Sciences, Seoul National University 
1, Gwanak-ro, Gwanak-gu, Seoul, Republic of Korea}
\email{byun@snu.ac.kr}

\author[Han]{Jeongmin Han}
\address{Department of Mathematical Sciences, Seoul National University 
1, Gwanak-ro, Gwanak-gu, Seoul, Republic of Korea}
\email{hanjm9114@snu.ac.kr}

\blfootnote{S.-S. Byun was supported by NRF-2017R1A2B2003877. J. Han was supported by NRF-2019R1C1C1003844.}

\subjclass[2010]{Primary: 35K55; Secondary: 35K10, 35K20.}
 \keywords{Parabolic equations; Fully nonlinear equations; Oblique derivative problems; $ W^{2,p}$-regularity}

\begin{abstract}
We study  fully nonlinear parabolic equations in nondivergence form with oblique boundary conditions. 
An optimal and global Calder\'{o}n-Zygmund estimate is obtained by proving that the Hessian of the viscosity solution  
to the oblique boundary problem is as integrable as the nonhomogeneous term in $L^{p}$ spaces 
under minimal regularity requirement on the nonlinear operator, the boundary data and the boundary of the domain.

\end{abstract}

\maketitle

\section{Introduction} 
In this paper, we study the following parabolic oblique boundary value problem
\begin{align} \label{ob_eqoripa}
\left\{ \begin{array}{ll}
F(D^{2}u, Du, u,  x, t) - u_{t} = f & \textrm{in $\Omega \times (0,T)$,}\\
\beta \cdot Du = 0 & \textrm{on $\partial \Omega \times (0,T)$,}\\
u(\cdot , 0) = 0 & \textrm{in $\Omega$,}
\end{array} \right. 
\end{align}
where  $\Omega \subset \mathbb{R}^{n}$ is a bounded domain with $n \ge 2$ and 
$T>0 $.  
Here, 
$ F:S(n) \times \mathbb{R}^{n} \times \mathbb{R} \times \Omega \times (0,T) \to  \mathbb{R}  $ is uniformly elliptic with constants $\lambda $ and $\Lambda $,
i.e. 
$$ \lambda ||N|| \le F(M+N,q,r,x,t)-F(M,q,r,x,t) \le \Lambda ||N||$$
for any $ M, N \in S(n)$, $N \ge 0 $, $q \in \mathbb{R}^{n}$, $r \in \mathbb{R} $ and $(x,t) \in \Omega \times (0,T)$,
and $ \beta :\partial \Omega \times (0,T) \to \mathbb{R}^{n}$ is a given function with $||\beta||_{L^{\infty}(\partial \Omega \times (0,T)) } \le 1$.
Under regularity assumptions on $\Omega,F$ and $\beta$, we prove global $W^{2,p}$-estimates for viscosity solutions of \eqref{ob_eqoripa}. 

This work is a natural outgrowth of the recent paper \cite{MR4046185} where $W^{2,p}$-regularity is proved for the fully nonlinear elliptic problem with oblique boundary conditions.
We point out that oblique boundary conditions give some information
but not specific value of the solution $u$ at the lateral boundary $\partial \Omega \times (0,T)$.
Thus, we have to solve the equation not only inside, but also at the boundary of the domain $ \Omega \times (0,T)$.
This is the main difference between our problem and the Dirichlet problem. 

To sketch the proof, first we need to establish $W^{2,p}$-regularity theory in a domain with flat boundary.
At the beginning, we first obtain a $W^{2,p}$-estimate in the case of $F=F(X,x,t)$ with a small BMO assumption.
And next we generalize this result to the case $F=F(X,q,r,x,t)$. 
Here we only consider $W^{1,p}$-regularity for $u$ since this regularity and the structure condition \eqref{paob_sc} for $F$ yield the desired regularity result in the flat domain. 
Then we can get the desired results by employing boundary regularity results of Dirichlet problems.
Finally, we can obtain global regularity results by using flattening and covering argument.
Throughout our proof, regularity results for limiting PDE \eqref{paromoeq}, which can be found in \cite{cm2019oblique}, play an essential role.
We compare our problem \eqref{paob_eq} with \eqref{paromoeq} that enjoys $C^{1,1}$-regularity up to the boundary as in \cite{cm2019oblique}, by employing the small BMO assumption in order to derive the desired comparison estimates.
In the process, we need to impose our boundary condition that $\partial \Omega \in C^{3} $, ensuring that $\beta \in C^{2}$. (See Remark \ref{rembeta})

The notion of viscosity solutions introduced by Evans \cite{MR597451} and named by Lions and Crandall \cite{MR690039} presents a new paradigm for studying PDEs.
In particular, this has led to a significant progress in researches on PDEs in nondivergence form.
The fundamental theory of viscosity solutions for fully nonlinear nondivergence equations can be found in \cite{MR1005611, MR1069735, MR1118699, MR1329831}, particularly, \cite{MR1135923,MR1139064,MR1151267} in the parabolic case.
Indeed, there have been many various results regarding nondivergence equations, like as in \cite{MR2735102,MR2738329,MR3013294,MR3124896,MR3169795,MR3437597}, etc.

Researches on the oblique boundary value problems have been actively conducted over the last four decades.  
Accordingly, regularity theory for these problems has also been developed.
In elliptic case, we refer to Lieberman's works \cite{MR923448, MR953664} for linear problems.
For nonlinear equations,  \cite{MR833695,MR1335754,MR1994804} have discovered regularity theory. 
Meanwhile, in parabolic case, there are several results such as \cite{MR765964,MR1046706,MR1111473,MR1192119}.
Besides, various results for oblique boundary problems can be found in articles such as \cite{MR1388600,MR1650200,MR1796316,MR2070626}, etc. 
On the other hand, to derive $W^{2,p}$-estimates for oblique problems,
it is essential to look at the regularity for limiting problems.
Milakis and Silvestre derived $C^{2,\alpha}$-regularity for elliptic problems with Neumann boundary data in \cite{MR2254613},
and Li and Zhang extended this to the general oblique problems in \cite{MR3780142}.
For parabolic problems, Chatzigeorgiou and Milakis have proved such regularity in \cite{cm2019oblique}.

The rest of the paper is organized as follows.
We introduce notations and some background about Hessian estimates for viscosity solutions of parabolic equations.
In Section 3, we collect several lemmas employed in the proof of our main result, Theorem \ref{paob_main}.
Finally, we derive the desired $W^{2,p}$-estimates for \eqref{ob_eqoripa} in Sections 4 and 5.

\section{Preliminaries}
\subsection{Notations}
We introduce some notations which will be used throughout this paper.
\begin{itemize} 
\item $x=(x_{1}, x_{2}, \dots, x_{n} )= (x',x_{n})  \in \mathbb{R}^{n}$, $(x,t) = (x',x_{n},t)$ for $t \in \mathbb{R}$. 
\item $\mathbb{R}_{+}^{n} : = \{ x \in \mathbb{R}^{n} : x_{n} > 0 \} $.
\item $B_{r}(x_{0}):=\{ x \in  \mathbb{R}^{n} :|x-x_{0}|<r \}  $ for $x_{0} \in  \mathbb{R}^{n} $, $r>0$, and $B_{r}^{+}(x_{0}):=B_{r}(x_{0}) \cap \mathbb{R}_{+}^{n}$. $B_{r}=B_{r}(0)$ and $B_{r}^{+}=B_{r}^{+}(0)$. 
\item $ B_{r,h} =B_{r}(-(R-h)e_{n})$, where $R$ satisfies $(R-h)^{2}+r^{2}=R^{2}$. 
$ B_{r,h}^{+}= B_{r,h}  \cap \mathbb{R}^{+}$.
\item $ T_{r}(x_{0}) :=   \{ x_{0}+x : x \in B_{r} \cap (\mathbb{R}^{n-1} \times \{ 0 \} ) \}$, $T_{r}= T_{r}(0) $.
\item $Q_{r}(x_{0},t_{0}):= B_{r}(x_{0}) \times (t_{0}-r^{2},t_{0})$ and $Q_{r}^{+}(x_{0},t_{0}):=B_{r}^{+}(x_{0})  \times (t_{0}-r^{2},t_{0})$ for $(x_{0},t_{0}) \in  \mathbb{R}^{n} \times \mathbb{R} $ and $r>0$. $Q_{r}=Q_{r}(0,0)$ and $Q_{r}^{+}=Q_{r}^{+}(0,0)$. 
\item $Q_{r,\delta}^{+}(x_{0},t_{0}):= B_{r-\delta}^{+}(x_{0}) \times (t_{0}-r^{2}+\delta^{2},t_{0})$, $Q_{r,\delta}^{+}=Q_{r,\delta}^{+}(0,0)$.
\item $V_{r,h}^{+}(x_{0},t_{0}):= B_{r,h}^{+}(x_{0}) \times (t_{0}-r^{2},t_{0})$,  $V_{r,h}^{+}=V_{r,h}^{+}(0,0)$.
\item $V_{r,h,\delta}^{+}(x_{0},t_{0}):= B_{r-\delta,h-\delta h/r}^{+}(x_{0}) \times (t_{0}-r^{2}+\delta^{2},t_{0})$, $V_{r,h,\delta}^{+}=V_{r,h,\delta}^{+}(0,0)$.
\item $Q_{r}^{\ast}(x_{0},t_{0}) = T_{r}(x_{0}) \times (t_{0}-r^{2},t_{0}) $, $Q_{r}^{\ast} := Q_{r}^{\ast} (0,0) $.
\item $K_{r}^{d}=(-r/2, r/2)^{d}$ for $r>0$ and $d= n-1$ or $n$, $K_{r}^{d}(x_{0})=K_{r}^{d} + x_{0}$. 
\item For $|\nu| \le r$, $Q_{1}^{\nu}= Q_{r}(0, 0) \cap (\{ x_{n} > - \nu \} \times \mathbb{R}) $ and $ Q_{r}^{\nu}(x_{0},t_{0})=Q_{r}^{\nu}+(x_{0},t_{0})$.
\item $Q_{r,\delta}^{\nu}= Q_{r, \delta}(0',\nu, 0) \cap (\mathbb{R}_{+}^{n} \times \mathbb{R})  $,   $ Q_{r, \delta}^{\nu}(x_{0},t_{0})=Q_{r, \delta}^{\nu}+(x_{0},t_{0})$.
\item For $\Omega \subset \mathbb{R}^{n}\times \mathbb{R}$, $\partial_{p} \Omega$ is the parabolic boundary of $\Omega$.
\item For $\Omega \subset \mathbb{R}^{n}\times \mathbb{R}$, $  r\Omega := \{ (rx, r^{2}t) \in \mathbb{R}^{n} \times \mathbb{R} : (x,t) \in \Omega \}$ and
$  r\Omega  (x,t):= r\Omega + (x,t)$.
\item We denote the time derivative, gradient and Hessian of $u$ by $u_{t}$, $ Du=(D_{1}u, \cdots , D_{n}u)$, and $D^{2}u = (D_{ij}u) $, respectively, where $ D_{i}u=\frac{\partial u}{\partial x_{i}} $ and $ D_{ij}u=\frac{\partial^{2} u}{\partial x_{i}\partial x_{j}} $ for $ 1 \le i,j \le n$.
\item Let $A$ be a measurable set in $ \mathbb{R}^{n} \times \mathbb{R}$ with $|A| \neq 0 $, and $f$ be a measurable function on $A$. Then we write
$$ \kint_{A} f dxdt = \frac{1}{|A|} \int_{A} f dxdt. $$
\item Let $\Omega \subset \mathbb{R}^{n}\times \mathbb{R}$. If a function $u$ is continuous in $ \Omega $, we write $u \in C(\Omega)$. The $C$-norm of $u$ is given by $$||u||_{C(\Omega)} := \sup_{(x,t) \in \Omega}|u(x,t)|.$$
If $Du $ ($D^{2}u$ and $u_{t}$) is continuous in $\Omega$, we write  $ C^{1}(\Omega)$ ($ C^{2}(\Omega)$, respectively). We define the $C^{1}$,$C^{2}$-norm by
 $$||u||_{C^{1}(\Omega)} := ||u||_{C(\Omega)} +||Du||_{C(\Omega)} ,$$
  $$||u||_{C^{2}(\Omega)}: = ||u||_{C^{1}(\Omega)}+||u_{t}||_{C(\Omega)}+ ||D^{2}u||_{C(\Omega)} .$$
\item   If a function $u$ satisfies $$|u(x,t)-u(y,s)| \le C(|x-y|^{\alpha}+|t-s|^{\alpha/2})$$ for any $(x,t),(y,s) \in \Omega$ and some $0<\alpha \le 1$ and $C>0$, we write $u \in  C^{0,\alpha}(\Omega)$. (i.e, $u$ is $(\alpha/2)$-H\"{o}lder continuous in $t$ and $\alpha$-H\"{o}lder continuous in $x$) The $ C^{0,\alpha}$-norm of $u$ is given by 
\begin{align*}||u||_{ C^{0,\alpha}( \Omega )}& := ||u||_{C( \Omega )}+ \sup_{\substack{(x,t) ,(y,s) \in  \Omega \\ (x,t) \neq (y,s)}}\frac{|u(x,t)-u(y,s)|}{ |x-y|^{\alpha}+|t-s|^{\alpha/2} }\\ &  = : ||u||_{C( \Omega )} + [u]_{C^{0, \alpha}( \Omega )}.
\end{align*}
\item If a function $u$ is $((1+\alpha)/2)$-H\"{o}lder continuous in $t$ and $Du$ is $\alpha$-H\"{o}lder continuous in $x$, we write $u \in C^{1,\alpha}( \Omega )$.  
\begin{align*}||u||_{C^{1,\alpha}( \Omega )} &:= ||u||_{C^{1}( \Omega )}  + \sup_{\substack{(x,t) ,(x,s) \in  \Omega \\ t \neq s}}\frac{|u(x,t)-u(x,s)|}{|t-s|^{(1+\alpha)/2}} \\ & \qquad  \quad + \sum_{i=1}^{n}\sup_{\substack{(x,t) ,(y,s) \in  \Omega \\ (x,t) \neq (y,s)}}\frac{|D_{i}u(x,t)-D_{i}u(y,s)|}{|x-y|^{\alpha}+|t-s|^{\alpha/2}} \\ & = :  ||u||_{C^{1}( \Omega )} + [u]_{C^{1+\alpha}(\Omega)}.
\end{align*}
\item  If a function $u$ satisfies that $u_{t}$ is $(\alpha/2)$-H\"{o}lder continuous in $t$ and $D^{2}u$ is $\alpha$-H\"{o}lder continuous in $x$, we write $u \in C^{2,\alpha}( \Omega )$.  The $C^{2,\alpha}$-norm of $u$ is given by
\begin{align*}||u||_{C^{2,\alpha}( \Omega )} &:= ||u||_{C^{2}( \Omega )}  + \sup_{\substack{(x,t) ,(x,s) \in  \Omega \\ t \neq s}}\frac{|u_{t}(x,t)-u_{t}(x,s)|}{ |x-y|^{\alpha}+|t-s|^{\alpha/2}} \\ & \qquad  \quad + \sum_{i,j=1}^{n}\sup_{\substack{(x,t) ,(x,s) \in  \Omega \\ (x,t) \neq (y,s)}}\frac{|D_{ij}u(x,t)-D_{ij}u(y,s)|}{|x-y|^{\alpha}+|t-s|^{\alpha/2}} \\ & = :  ||u||_{C^{2}( \Omega )} + [u]_{C^{2+\alpha}(\Omega)}.
\end{align*}
\item Let $1\le p \le \infty$. If a function $u$ satisfies that $\int_{\Omega}|u(x,t)|^{p}dxdt < \infty$, we write $u \in L^{p}(\Omega) $. The $L^{p}$-norm of $u$ is given by
\begin{align*}
||u||_{L^{ p}( \Omega )}:=\bigg( \int_{\Omega}|u(x,t)|^{p} dxdt \bigg)^{1/p}.
\end{align*}

In addition, if a function $u$ satisfies that $\esssup_{(x,t) \in \Omega}  |u(x,t)|< \infty $, we write $u \in L^{\infty}(\Omega)$ with its norm
$$||u||_{L^{ \infty}( \Omega )}:= \esssup_{(x,t) \in \Omega}|u(x,t)|.$$
\item If a function $u$ satisfies that $u,  Du \in L^{p}(\Omega)$, we write $u \in W^{1,p}(\Omega) $. The $W^{1,p}$-norm of $u$ is given by
\begin{align*}
||u||_{W^{1,p}( \Omega )}:=\big(||u||_{L^{p}( \Omega )}^{p}+ ||Du||_{L^{p}( \Omega )}^{p}  \big)^{1/p}.
\end{align*}
\item If a function $u$ satisfies that $u, u_{t}, Du, D^{2}u \in L^{p}(\Omega)$, we write $u \in W^{2,p}(\Omega) $. The $W^{2,p}$-norm of $u$ is given by
\begin{align*}
||u||_{W^{2,p}( \Omega )}:=\big(||u||_{L^{p}( \Omega )}^{p}+||u_{t}||_{L^{p}( \Omega )}^{p}+||Du||_{L^{p}( \Omega )}^{p}+||D^{2}u||_{L^{p}( \Omega )}^{p} \big)^{1/p}.
\end{align*}
\item $S(n)$ is denoted by the space of $n \times n $ real symmetric matrices.
\item For every $ M \in S(n)$, $||M||:=\sup_{|x| \le 1} |Mx|$.
\end{itemize}

\subsection{Background knowledge}
We first look at related concepts to proceed with our discussion.
\subsubsection{Parabolic second order differentiability}
To observe second order differentiability, we first need to characterize paraboloids.
\begin{definition}
Let $M >0$.
A \emph{convex paraboloid $P$ with opening $M$} is defined by 
$$P(x,t) = a + l\cdot x + \frac{M}{2} (|x|^{2}-t) , $$
where $a \in \mathbb{R} $ and $l \in \mathbb{R}^{n}$.
We also define a \emph{concave paraboloid} by replacing $M$ with $-M$ in the above definition.
\end{definition}

Let $\Omega$ be a bounded domain in $\mathbb{R}^{n} \times \mathbb{R} $, 
$U \subset \Omega$ be an open subset of $\Omega$, $M>0$, and $u \in C(\Omega)$.
For $s \in \mathbb{R}$, we use the following notation $U_{s}=\{ (x,t) \in U : t \le s \} $ temporarily.
Next we define `good set' and `bad set'.
Let $\underline{G}_{M}(u, U)$ be the set of points  $(x_{0},t_{0}) \in U$  which satisfy that there is a concave paraboloid $P$ with opening $M$ 
such that $P(x_{0},t_{0}) =u(x_{0},t_{0}) $ and $ P(x,t) \le u(x,t)$ for any $(x,t) \in U_{t_{0}}$,
and $\underline{A}_{M}(u, U) = U \backslash \underline{G}_{M}(u, U) $.
Analogously, we can define  $\overline{G}_{M}(u, U)$ and  $\overline{A}_{M}(u, U)$  by using a convex paraboloid as a barrier. In addition, we denote by
$$ G_{M}(u,U) =  \underline{G}_{M}(u, U)  \cap \overline{G}_{M}(u, U),$$
$$ A_{M}(u,U) =  \underline{A}_{M}(u, U)  \cup \overline{A}_{M}(u, U).$$

Roughly speaking, $A_{M}$ can be understood to be a set of points with `bad Hessian'. 
Thus, we need to obtain uniform estimates for its measure to establish $W^{2,p}$-theory,
which will be our main purpose investigated in Section 3 and 4.

\subsubsection{Viscosity solutions}
Let $\Omega \subset \mathbb{R}^{n}$ be a bounded domain and  $\Gamma \subset \partial \Omega$. 
Consider the following problem
\begin{align} \label{paobpl_add}
\left\{ \begin{array}{ll}
F(D^{2}u, Du, u,  x, t) - u_{t} = f & \textrm{in $\Omega_{1}=\Omega \times (0,1)$,}\\
\beta \cdot Du = 0 & \textrm{on $\Gamma_{I} =\Gamma \times I,$}
\end{array} \right. 
\end{align}
where $I$ is a fixed interval in $(0,1)$.

Then a viscosity solution is defined as follows. 
\begin{definition} Let $F$ be continuous in all variables and $f \in C(\Omega_{1} \cup \Gamma_{I})$. 
A continuous function $u \in  C(\Omega_{1} \cup \Gamma_{I})$ is called a viscosity solution of \eqref{paobpl_add} if the following conditions hold:
\begin{itemize}
\item[(a)] for all $ \varphi \in C^{2}(\Omega_{1} \cup \Gamma_{I}) $ touching $u$ by above at $(x_{0},t_{0}) \in \Omega_{1} \cup \Gamma_{I}$, $$F( D^{2} \varphi (x_{0},t_{0}), D \varphi (x_{0},t_{0}),  u(x_{0},t_{0}),  x_{0},t_{0})-\varphi_{t}(x_{0},t_{0}) \ge f(x_{0},t_{0}) $$ when $(x_{0},t_{0})  \in \Omega_{1}$ and $ \beta(x_{0},t_{0})\cdot D\varphi(x_{0},t_{0}) \ge 0$ when $(x_{0},t_{0}) \in \Gamma_{I} $. 
\\ \item[(b)] for all $ \varphi \in C^{2}(\Omega_{1} \cup \Gamma_{I}) $ touching $u$ by below at $(x_{0},,t_{0}) \in \Omega_{1} \cup \Gamma_{I}$, $$F( D^{2} \varphi (x_{0},t_{0}), D \varphi (x_{0},t_{0}),  u(x_{0},t_{0}),  x_{0},t_{0})-\varphi_{t}(x_{0},t_{0}) \le f(x_{0},t_{0}) $$ when $(x_{0},t_{0}) \in \Omega_{1}$ and $ \beta(x_{0},t_{0})\cdot D\varphi(x_{0},t_{0}) \le 0$ when $(x_{0},t_{0}) \in \Gamma_{I} $. 
\end{itemize}
\end{definition}
Note that if a function $u$ satisfies the condition (a) ((b), respectively) in the above definition, we mean that 
$F(D^{2}u, Du, u,  x, t) - u_{t} \ge (\le) f $ in the viscosity sense.
We also remark that viscosity solutions can be defined by using test functions $\varphi$ in $W^{2,p}$ space.
(See \cite{MR1135923})

To talk about viscosity solutions, it is indispensable to introduce the following.
\begin{definition} 
For any $M \in S(n) $, the Pucci extremal operator $ \mathcal{M}^{+} $ and  $\mathcal{M}^{-}$ are defined as follows:
$$  \mathcal{M}^{+}(\lambda, \Lambda, M)=\Lambda \sum_{e_{i}>0} e_{i} + \lambda \sum_{e_{i}<0} e_{i} \ \textrm{and}\  \mathcal{M}^{-}(\lambda, \Lambda, M)=\lambda \sum_{e_{i}>0} e_{i} + \Lambda \sum_{e_{i}<0} e_{i} $$
where $ e_{i}$ are eigenvalues of $M$.
\end{definition}
For $b\ge0$ and $u  $ be a continuous function in the viscosity sense, we also define 
$$ L ^{\pm}(\lambda, \Lambda, b, u ) = \mathcal{M}^{\pm}(\lambda, \Lambda, D^{2}u) \pm b|Du| -u_{t}.$$

Next, we present an important concept to understand viscosity solutions.
\begin{definition}
Let $\Omega \subset \mathbb{R}^{n} \times \mathbb{R}$ , $ b \ge 0$ and $0 < \lambda \le \Lambda $.
We define the classes $\underline{S} (\lambda, \Lambda, b, f) $ ($ \overline{S} (\lambda, \Lambda, b, f) $, respectively) to be the set of all continuous functions $u$ that satisfy $ L ^{+}u \ge f$($ L ^{-}u \le f$) in the viscosity sense in $\Omega$. 
We also define
$$S (\lambda, \Lambda, b, f) =\overline{S} (\lambda, \Lambda, b, f) \cap \underline{S}  (\lambda, \Lambda, b, f)  $$ 
and
$$S^{\ast} (\lambda, \Lambda, b, f) =\overline{S} (\lambda, \Lambda, b,| f|) \cap \underline{S}  (\lambda, \Lambda, b, -|f|) . $$ 
When $b=0$, we omit it like  $S  (\lambda, \Lambda, f)$.
\end{definition}

In this paper, we always assume the following conditions:
$F(X,p,r,x,t)$ is convex in $M$, continuous in $M,q,r,x $ and $t$,  and satisfies
\begin{align} \label{paob_sc} \begin{split}
& \mathcal{M}^{-} (\lambda, \Lambda, M-N) -b|q_{1}-q_{2}| - c|r_{1}-r_{2}| \\ & 
\qquad \le F(M,q_{1},r_{1},x,t) - F(N,q_{2},r_{2},x,t) \\ &
\qquad \qquad \le \mathcal{M}^{+} (\lambda, \Lambda, M-N) +b|p-q| + c|r-s|
\end{split}
\end{align}
for fixed $0< \lambda \le \Lambda $ and $b,c>0$, $M,N \in S(n) $, and any $ q_{1}, q_{2} \in \mathbb{R}^{n}$, $r_{1}, r_{2} \in \mathbb{R}$ and $ (x,t) \in \mathbb{R}^{n} \times \mathbb{R}$.

\section{Boundary estimates for paraboloids}
This section concerns some geometric and analytic tools which will be used later for our $W^{2,p}$-estimates.
We start with several lemmas and prove them, if necessary.

First of all, we mention about some properties of $L^{p}$ functions. 
We can find this result in several papers, for example, \cite[Theorem 4.8]{MR1135923}.
\begin{proposition} \label{mppara}
Let  $f$ be a locally integrable function in $\mathbb{R}^{n}\times \mathbb{R}$ and $\Omega$ be a bounded domain in $ \mathbb{R}^{n}\times \mathbb{R}$.
The maximal operator $M$ is defined as follows:
$$ M(f)(x,t) = \sup_{\rho > 0}  \kint_{Q_{\rho}(x,t)}|f(x,t)|dxdt.$$
Then
$$ |\{ (x,t) \in \Omega: M(f)(x,t) \ge \lambda \}| \le \frac{C(n)}{\lambda} || f||_{L^{1}(\Omega)}$$
  and
$$||M(f)||_{L^{p}(\Omega) } \le C(n,p)||f||_{L^{p} (\Omega)},$$
whenever $f \in L^{p}(\Omega)$ for $ 1 < p< \infty$.
\end{proposition}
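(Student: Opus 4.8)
The plan is to reduce both assertions to a single Vitali-type covering argument: I would first prove the weak type $(1,1)$ inequality directly, and then recover the strong $L^{p}$ bound for $1<p<\infty$ by interpolating it with the trivial $L^{\infty}$ estimate $\|M(f)\|_{L^{\infty}(\Omega)}\le\|f\|_{L^{\infty}(\Omega)}$.

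For the weak type estimate, fix $\lambda>0$ and set $E_{\lambda}=\{(x,t)\in\Omega:\ M(f)(x,t)\ge\lambda\}$. By the definition of the parabolic maximal operator, to each $(x,t)\in E_{\lambda}$ one can attach a radius $\rho(x,t)>0$ with
\[
|Q_{\rho(x,t)}(x,t)|\ \le\ \frac{2}{\lambda}\int_{Q_{\rho(x,t)}(x,t)}|f|\,dy\,ds\ \le\ \frac{2}{\lambda}\,\|f\|_{L^{1}(\Omega)},
\]
so in particular the radii $\rho(x,t)$ are uniformly bounded. I would then extract from $\{Q_{\rho(x,t)}(x,t)\}_{(x,t)\in E_{\lambda}}$ a countable pairwise disjoint subfamily $\{Q_{\rho_{i}}(x_{i},t_{i})\}_{i}$ such that every cylinder of the original family is contained in a concentric enlargement $\widehat Q_{i}$ of some $Q_{\rho_{i}}(x_{i},t_{i})$ by a fixed dimensional factor; hence $E_{\lambda}\subset\bigcup_{i}\widehat Q_{i}$. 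Using disjointness (the selected cylinders may be taken inside a fixed bounded neighbourhood of $\Omega$) this yields
\[
|E_{\lambda}|\ \le\ \sum_{i}|\widehat Q_{i}|\ \le\ C(n)\sum_{i}|Q_{\rho_{i}}(x_{i},t_{i})|\ \le\ \frac{C(n)}{\lambda}\sum_{i}\int_{Q_{\rho_{i}}(x_{i},t_{i})}|f|\,dy\,ds\ \le\ \frac{C(n)}{\lambda}\,\|f\|_{L^{1}(\Omega)}.
\]

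The strong bound then comes essentially for free: $M$ is sublinear, so the Marcinkiewicz interpolation theorem applied between the weak $(1,1)$ bound just proved and the strong $(\infty,\infty)$ bound gives $\|M(f)\|_{L^{p}(\Omega)}\le C(n,p)\|f\|_{L^{p}(\Omega)}$ for all $1<p<\infty$.

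The one step I expect to require genuine care — everything else being classical — is the covering (engulfing) lemma underlying the extraction of $\{Q_{\rho_{i}}(x_{i},t_{i})\}_{i}$, because the cylinders $Q_{\rho}(x,t)=B_{\rho}(x)\times(t-\rho^{2},t)$ are backward in time and so are not balls of a metric. The fact to verify is that if $Q_{\rho'}(x',t')\cap Q_{\rho}(x,t)\neq\emptyset$ with $\rho'\le\rho$, then $Q_{\rho'}(x',t')\subset B_{3\rho}(x)\times(t-2\rho^{2},t+\rho^{2})$, a set whose measure is comparable to $|Q_{\rho}(x,t)|$ with a constant depending only on $n$ (the small forward extension in time being harmless). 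Once this is checked, the familiar greedy selection of a maximal disjoint subcollection produces the cylinders used above, and the argument proceeds exactly as in the references cited right after the statement.
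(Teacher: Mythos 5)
Your proof is correct and is the classical argument (Vitali covering for the weak type $(1,1)$ bound, then Marcinkiewicz interpolation against the trivial $L^\infty$ bound); the paper does not prove Proposition~\ref{mppara} at all but simply cites Wang~\cite{MR1135923}, where the same scheme is carried out. The engulfing property you flag for the backward cylinders $Q_\rho(x,t)=B_\rho(x)\times(t-\rho^2,t)$ is exactly the point requiring care in the parabolic setting, and your verification (enlargement to $B_{3\rho}(x)\times(t-2\rho^2,t+\rho^2)$, with measure $3^{n+1}|Q_\rho|$) is correct.
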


The following lemma provides an equivalent condition under which a function is in $L^{p}$ space,
as used in \cite{MR1135923}.
Since the proof is almost same as in the elliptic case, we omit the proof. (See \cite[Lemma 7.3]{MR1351007})
\begin{proposition} \label{paralpeq}
Let $f$ be a nonnegative and measurable function in a domain $ \Omega \subset \mathbb{R}^{n} \times \mathbb{R}$ and $ \mu_{f}$ be its distribution function, that is,
$$ \mu_{f} (\lambda)= | \{ (x,t) \in \Omega : f(x,t) > \lambda \} | \quad \textrm{for} \  \lambda>0 .$$
Let $ \eta >0$ and $M>1$ be given constants. Then, for $ 0< p < \infty $,
$$  f \in L^{p}(\Omega) \ \Longleftrightarrow  \ \sum_{k \ge 1}M^{pk} \mu_{f}(\eta M^{k}) =:S_{0} < \infty$$
and
$$ C^{-1}S_{0} \le ||f||_{L^{p}(\Omega)}^{p} \le C(|\Omega| +S_{0}),$$
where $C$ is a universal constant. 
\end{proposition}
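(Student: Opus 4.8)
The plan is to reduce everything to the classical layer-cake formula
\[
\|f\|_{L^{p}(\Omega)}^{p}=\int_{\Omega}f^{p}\,dxdt=p\int_{0}^{\infty}\lambda^{p-1}\mu_{f}(\lambda)\,d\lambda ,
\]
which is valid for any nonnegative measurable $f$ and $0<p<\infty$, and then to discretize the outer integral dyadically along the geometric sequence $\eta M^{k}$. Since $\mu_{f}$ is nonincreasing, on each interval $[\eta M^{k},\eta M^{k+1})$ we can bound $\mu_{f}(\lambda)$ above by $\mu_{f}(\eta M^{k})$ and below by $\mu_{f}(\eta M^{k+1})$, while $\int_{\eta M^{k}}^{\eta M^{k+1}}\lambda^{p-1}\,d\lambda = \frac{(\eta M)^{p}}{p}(M^{p}-1)M^{pk}$ up to the factor $M^{-p}$; this is exactly where the weights $M^{pk}$ and the role of $M>1$ and $\eta>0$ enter. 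Summing over $k\ge 0$ gives two-sided comparisons between $\int_{\eta}^{\infty}\lambda^{p-1}\mu_{f}(\lambda)\,d\lambda$ and $S_{0}=\sum_{k\ge1}M^{pk}\mu_{f}(\eta M^{k})$, with constants depending only on $p$, $M$ and $\eta$; here a harmless index shift between $\sum_{k\ge0}$ and $\sum_{k\ge1}$ is absorbed, using $\mu_{f}(\eta M)\le\mu_{f}(\eta)$ for the $k=0$ term.

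Next I would handle the contribution of the remaining range $0<\lambda\le\eta$. There we simply estimate $\mu_{f}(\lambda)\le|\Omega|$, so $p\int_{0}^{\eta}\lambda^{p-1}\mu_{f}(\lambda)\,d\lambda\le \eta^{p}|\Omega|$, which contributes the $|\Omega|$ term on the right-hand side of the claimed inequality $\|f\|_{L^{p}(\Omega)}^{p}\le C(|\Omega|+S_{0})$. For the lower bound $C^{-1}S_{0}\le\|f\|_{L^{p}(\Omega)}^{p}$ one only needs the tail $\lambda>\eta$, so this part is not needed there. Combining: the finiteness of $S_{0}$ is equivalent to the finiteness of the tail integral $\int_{\eta}^{\infty}\lambda^{p-1}\mu_{f}(\lambda)\,d\lambda$, and since the complementary piece $\int_{0}^{\eta}\lambda^{p-1}\mu_{f}(\lambda)\,d\lambda\le\eta^{p}|\Omega|/p$ is always finite, we get $f\in L^{p}(\Omega)\iff S_{0}<\infty$ together with the quantitative bounds, all constants being universal in the sense of depending only on $n,p$ (and the fixed $\eta,M$, which the statement treats as given).

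I do not expect a genuine obstacle here; the statement is a standard real-variable fact and the only care needed is bookkeeping of the constants in the dyadic comparison and the index shift, plus noting that the layer-cake identity requires no regularity on $f$ beyond measurability. As the excerpt indicates, the argument is identical to the elliptic case in \cite[Lemma 7.3]{MR1351007}, the parabolic setting entering only through the replacement of balls by parabolic cylinders $Q_{\rho}$, which plays no role in this particular lemma. Accordingly I would either reproduce the short computation above or, following the authors' stated convention, simply refer to \cite[Lemma 7.3]{MR1351007} and omit the details.
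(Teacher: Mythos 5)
Your proposal is correct and is exactly the standard layer-cake/dyadic-discretization argument that the paper invokes by reference to \cite[Lemma 7.3]{MR1351007}; the paper itself omits the proof. The bookkeeping you describe (monotonicity of $\mu_f$, two-sided bounds on each $[\eta M^{k},\eta M^{k+1})$, the $|\Omega|$ contribution from $\lambda\le\eta$, and the index shift) is precisely what is needed.
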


Next we focus on parabolic Hessian estimates.
In \cite{MR1135923}, we can find the following interior and boundary estimates with slight modifications.

\begin{lemma} \label{pardiin}
Let $ u \in \overline{S} (f)$ in $\Omega \times (0,1]$ for some domain $ \Omega \in \mathbb{R}^{n}$.

If $||u||_{ L^{\infty}( \Omega_{1})} \le 1$, then for any $\Omega '  \subset \subset \Omega$ and $\tau \neq 1 $,  
\begin{align*} |\underline{A}_{s} (u, & \Omega \times (0,1]) \cap (\Omega ' \times (0, \tau ])| \\ & \le C(n, \lambda, \Lambda, \Omega, \tau, d(\Omega ', \partial \Omega) )\frac{(1+ ||f||_{ L^{n+1}( \Omega \times (0,1])})^{\mu}}{ s^{\mu}},
\end{align*}
where $ \mu$ is  universal.
\end{lemma}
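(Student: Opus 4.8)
The plan is to prove Lemma \ref{pardiin} by the standard viscosity-solution machinery: reduce to an estimate on a single reference cube by a barrier/scaling argument, then iterate via a Calder\'on--Zygmund-type decomposition. I will work throughout with the lower good/bad sets $\underline{G}_s$, $\underline{A}_s$, exploiting that $u\in\overline{S}(f)$ means $u$ is a viscosity supersolution of the extremal inequality $\mathcal{M}^-(\lambda,\Lambda,D^2u)-u_t\le f$, so it can be touched from below by concave paraboloids on the appropriate "past" sets $U_{t_0}$.

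First I would establish the key point estimate: there exist universal constants $M_0>1$ and $0<\sigma<1$ such that if $u\in\overline{S}(f)$ in $Q_3$ (after normalizing via parabolic scaling), $\inf_{Q_3}u\le \cdots$ type normalization holds, and $\|f\|_{L^{n+1}(Q_3)}$ is small, then $|\underline{G}_{M_0}(u,Q_3)\cap Q_1|\ge \sigma |Q_1|$, equivalently $|\underline{A}_{M_0}(u,Q_3)\cap Q_1|\le (1-\sigma)|Q_1|$. This is obtained by building an explicit concave paraboloid barrier from below, using the parabolic Alexandrov--Bakelman--Pucci (ABP) maximum principle (whose parabolic version with the $L^{n+1}$ norm is exactly the reason the exponent $n+1$ appears), together with a covering by the contact set; this is essentially \cite[Section 3--4]{MR1135923} transcribed. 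The dependence on $\Omega$, $\tau$ and $d(\Omega',\partial\Omega)$ enters only through the number of scaled reference cubes needed to cover $\Omega'\times(0,\tau]$ while keeping a definite parabolic distance to $\partial_p(\Omega\times(0,1])$, so these quantities control the admissible scaling radius.

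Next I would run the iteration. Let $A^k=\underline{A}_{M_0^k}(u,\Omega\times(0,1])\cap(\Omega'\times(0,\tau])$. The nestedness $\underline{A}_{M}\subset\underline{A}_{M'}$ for $M\ge M'$ together with a parabolic Calder\'on--Zygmund cube decomposition and the point estimate above yields a recursive bound $|A^{k+1}|\le (1-\sigma)\,|A^{k}| + |\{(x,t): M((|f|/\delta)^{n+1})>s^{n+1}M_0^{(k+1)(n+1)}\}|$ for a small universal $\delta$, where $M$ is the parabolic maximal operator of Proposition \ref{mppara}; the extra term accounts for the places where the smallness assumption on $f$ fails at scale $M_0^{-k}$. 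Summing the geometric recursion and invoking the weak-type and strong-type bounds for $M$ from Proposition \ref{mppara} gives $\sum_k M_0^{\mu k}|A^k|<\infty$ with $\mu$ chosen so that $M_0^\mu=1/(1-\sigma)$, quantitatively bounded by $C(1+\|f\|_{L^{n+1}(\Omega\times(0,1])})^\mu$. Translating this back through Proposition \ref{paralpeq} (or directly, since $|\underline{A}_s\cap(\Omega'\times(0,\tau])|$ is decreasing in $s$ and sandwiched between consecutive $|A^k|$) produces the claimed decay $|\underline{A}_s(u,\Omega\times(0,1])\cap(\Omega'\times(0,\tau])|\le C\,s^{-\mu}(1+\|f\|_{L^{n+1}})^\mu$.

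The main obstacle I anticipate is the careful bookkeeping of the parabolic geometry in the covering and decomposition steps: unlike the elliptic case, the "past" restriction $U_{t_0}$ in the definition of $\underline{G}_M$ forces one to use parabolic cubes $Q_r$ adapted so that the barrier constructed on a subcube still lies below $u$ on the entire backward-in-time portion, and one must verify that the scaled reference estimate composes correctly under the dyadic parabolic splitting (splitting each spatial cube into $2^n$ children but the time interval into $2^2=4$ pieces). Handling the interaction of this with the distance-to-boundary condition $d(\Omega',\partial\Omega)$ and the cutoff at $\tau\ne 1$ — in particular ensuring a uniform lower bound on the admissible scaling radius — is where the constant $C(n,\lambda,\Lambda,\Omega,\tau,d(\Omega',\partial\Omega))$ is actually produced, and is the step requiring the most care. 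The barrier construction and the ABP input are, by contrast, routine once the parabolic ABP estimate from \cite{MR1135923} is cited.
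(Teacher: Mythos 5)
The paper provides no proof of this lemma: immediately before stating Lemmas \ref{pardiin} and \ref{pardibd}, the authors simply write that these interior and boundary estimates ``can be found in \cite{MR1135923}'' with slight modifications, i.e.\ in Wang's parabolic regularity paper. Your reconstruction --- a barrier plus parabolic ABP measure estimate on a reference cube (which is where the $L^{n+1}$ exponent originates), followed by a parabolic Calder\'on--Zygmund dyadic iteration, with the covering of $\Omega'\times(0,\tau]$ by admissibly scaled cubes producing the constant's dependence on $\Omega$, $\tau$, and $d(\Omega',\partial\Omega)$ --- is precisely the argument behind that citation, so you are in effect reconstructing the proof that the paper only references. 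Your remark that the parabolic ``past'' restriction and the anisotropic dyadic splitting (into $2^n$ spatial children but $4$ temporal ones) are where the real care lies is also accurate and matches how the paper itself later organizes Lemmas \ref{calzypa}--\ref{calzypaap}.

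One technical slip in the iteration step. With your normalization $M_0^\mu = 1/(1-\sigma)$, iterating $|A^{k+1}|\le(1-\sigma)|A^k|+|B^k|$ and multiplying by $M_0^{\mu k}$ gives, using the weak $(1,1)$ bound $|B^i|\le C M_0^{-(n+1)i}\|f\|_{L^{n+1}}^{n+1}$, only the uniform bound $\sup_k M_0^{\mu k}|A^k|\le C$, \emph{not} the summability $\sum_k M_0^{\mu k}|A^k|<\infty$ that you assert: the zeroth-order term $M_0^{\mu k}(1-\sigma)^k|A^0|$ is constant in $k$ with that choice. To make the series converge one must take $\mu$ strictly smaller, exactly as the paper does in the proof of Corollary \ref{parobdist}, where it bounds the analogous quantity by $(1+Ck)\max\{2\sigma,M^{-(n+1)}\}^k$ and then observes this is $\le C(n)M^{-\mu k}$ ``for some sufficiently small $\mu$.'' The conclusion of Lemma \ref{pardiin} only requires the pointwise decay $|A^k|\le C M_0^{-\mu k}$, so the end result survives; but your detour through Proposition \ref{paralpeq} is unnecessary here (that proposition is for converting the decay into an $L^p$ bound, which is a later step), and your stated $\mu$ sits on the boundary of what works rather than inside.
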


\begin{lemma} \label{pardibd}
Let $u \in \overline{S} (f)$ in $\Omega = K_{4}^{n-1} \times (0,2) \times (0,2]$.
Suppose $||u||_{ L^{\infty}( \Omega )} \le 1$.
Then
$$ |\underline{A}_{s}  (u,  \Omega) \cap (K_{2}^{n-1} \times (0,1) \times (0, 1])| \le C(n, \lambda, \Lambda   )\frac{(1+ ||f||_{ L^{n+1}( \Omega)})^{\mu}}{ s^{\mu}}.$$
\end{lemma}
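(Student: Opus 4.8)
The plan is to derive Lemma \ref{pardibd} as the flat-boundary counterpart of the interior bound in Lemma \ref{pardiin}, following the parabolic $W^{2,\varepsilon}$ scheme of \cite{MR1135923} (the parabolic analogue of the Caffarelli--Cabr\'e argument in \cite{MR1351007}): a one-scale density estimate for the good set, amplified by a parabolic Calder\'on--Zygmund cube iteration into power decay. First I would normalize: replacing $u$ by $u/(1+\|f\|_{L^{n+1}(\Omega)})$ and $f$ accordingly, it suffices to prove $|\underline{A}_s(u,\Omega)\cap(K_2^{n-1}\times(0,1)\times(0,1])|\le C s^{-\mu}$ under the additional assumption $\|f\|_{L^{n+1}(\Omega)}\le\varepsilon_0$ for a small universal $\varepsilon_0$ to be fixed; the general case then follows by rescaling the opening, $s\mapsto s(1+\|f\|_{L^{n+1}(\Omega)})$, which is exactly what produces the factor $(1+\|f\|_{L^{n+1}(\Omega)})^{\mu}$.

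The heart of the matter is a single-scale density estimate: there exist universal constants $M_0>1$ and $\mu_0\in(0,1)$ such that, for every (half-)parabolic dyadic subcube $Q$ of the reference geometry with $Q\subset\Omega$ (including those whose closure meets $\{x_n=0\}$) and every $M\ge 1$, if $\underline{A}_M(u,\Omega)$ occupies less than the fraction $\mu_0$ of the predecessor cube $\widehat Q$ of $Q$, then the center of $Q$ does not lie in $\underline{A}_{M_0 M}(u,\Omega)$. I would prove this by applying the parabolic ABP (Krylov--Tso) maximum principle to $v=u-P$ for a suitably chosen concave paraboloid $P$ of opening comparable to $M$: since $u\in\overline{S}(f)$ we have $v\in\overline{S}(f)$, and the smallness of $\|f\|_{L^{n+1}}$ forces the contact set of the lower envelope of $v$, which after rescaling the correction lies in $\underline{G}_{cM}(u,\Omega)$, to carry at least a $\mu_0$-fraction of $\widehat Q$. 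The feature that makes this a \emph{boundary} statement is that a concave paraboloid witnessing $(x_0,t_0)\in\underline{G}_M$ need only lie below $u$ on the past $\Omega_{t_0}$, and since $\Omega$ is open in the $x_n$-variable (it does not contain $\{x_n=0\}$) no information on the values of $u$ at $\{x_n=0\}$ is ever used; the uniform bound $\|u\|_{L^\infty(\Omega)}\le1$ is precisely what keeps the ABP barrier, and hence $M_0$ and $\mu_0$, independent of $\dist(Q,\{x_n=0\})$. Establishing this uniformity up to $\{x_n=0\}$, together with the bookkeeping for half-cubes abutting that face, is the main obstacle, and is what distinguishes the present lemma from Lemma \ref{pardiin}, whose constant is allowed to degenerate as $d(\Omega',\partial\Omega)\to0$.

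Granting the density estimate, the remaining step is the iteration. Covering $K_2^{n-1}\times(0,1)\times(0,1]$ by dyadic parabolic cubes and applying the density estimate repeatedly through the parabolic (past-stacking) Calder\'on--Zygmund decomposition yields, for a fixed large universal $s_0$ and every $k\ge1$,
\[
|\underline{A}_{s_0 M_0^{k}}(u,\Omega)\cap(K_2^{n-1}\times(0,1)\times(0,1])|\le(1-\mu_0)^{k}\,|K_2^{n-1}\times(0,1)\times(0,1]|.
\]
Setting $\mu=-\log(1-\mu_0)/\log M_0>0$ and, for given $s\ge s_0$, choosing $k$ with $s_0M_0^{k}\le s<s_0M_0^{k+1}$, the monotonicity of $M\mapsto\underline{A}_M(u,\Omega)$ gives $|\underline{A}_s(u,\Omega)\cap(\cdots)|\le C(1-\mu_0)^{k}\le C s^{-\mu}$, while for $s\le s_0$ the estimate is trivial since the left-hand side is at most $|K_2^{n-1}\times(0,1)\times(0,1]|$. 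Undoing the normalization reinstates the factor $(1+\|f\|_{L^{n+1}(\Omega)})^{\mu}$, and keeping track of the constants confirms that $C$ and $\mu$ depend only on $n,\lambda,\Lambda$. As a shortcut one may instead quote Lemma \ref{pardiin} verbatim on cubes that stay a fixed distance away from $\{x_n=0\}$ and run the barrier argument only in the strip $\{0<x_n<c\}$, but the uniform ABP step near $\{x_n=0\}$ cannot be avoided.
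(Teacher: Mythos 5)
The paper gives no proof of this lemma, stating only that it and Lemma \ref{pardiin} can be found in Wang \cite{MR1135923} ``with slight modifications,'' so there is no in-paper argument to compare against line by line. Your overall plan is the intended one: normalize so that $\|f\|_{L^{n+1}}$ is small, prove a one-scale ABP density estimate for the good set that is uniform up to $\{x_n=0\}$, amplify it by the parabolic Calder\'on--Zygmund decomposition (Lemma \ref{calzypa}) into power decay, and rescale the opening at the end to recover the factor $(1+\|f\|_{L^{n+1}})^{\mu}$. You also correctly isolate the boundary feature that constitutes the ``slight modification'': since $\Omega=K_4^{n-1}\times(0,2)\times(0,2]$ is open at $\{x_n=0\}$, a concave paraboloid witnessing membership in $\underline{G}_M(u,\Omega)$ need only lie below $u$ on $\Omega_{t_0}$, so the touching and contact-set steps of the ABP argument never use values of $u$ on $\{x_n=0\}$, and the barrier constants do not degenerate as one approaches that face --- in contrast to Lemma \ref{pardiin}, whose constant depends on $d(\Omega',\partial\Omega)$.

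The one substantive defect is the formulation of the one-scale density estimate. What feeds into Lemma \ref{calzypa} must be of the following shape: for a dyadic cube $K$, if $\overline{K}^m$ contains a point lying in $\underline{G}_{M^k}(u,\Omega)$ at which $M(|f|^{n+1})$ is small, then $|\underline{A}_{M^{k+1}}(u,\Omega)\cap K|\le\sigma|K|$; that is, a \emph{good point near $K$} forces $\underline{G}_{M^{k+1}}$ to have density $\ge 1-\sigma$ in $K$ (this is exactly the role played by Lemma \ref{palemh} inside the contradiction argument of Lemma \ref{calzypaap}, and it is the form used in \cite{MR1135923} and \cite{MR1351007}). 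Your stated estimate --- ``$\underline{A}_M$ has density $<\mu_0$ in $\widehat Q$ implies the center of $Q$ lies in $\underline{G}_{M_0M}$'' --- has the implication reversed (density of the bad set controls a single point, rather than one good point controlling the density of the bad set) and does not match the hypothesis of the covering lemma, so as written it would not produce the geometric decay $|\underline{A}_{s_0M_0^k}\cap\cdots|\le(1-\mu_0)^k|\cdots|$ claimed in the next sentence. The fix is just to restate the one-scale lemma in the standard ``good point near $K$ $\Rightarrow$ low bad-set density in $K$'' direction; the ABP mechanism you describe (slide a concave paraboloid below $u-P$ and let the contact set carry a fixed fraction of measure) already proves that correct statement, and the rest of your iteration then goes through.
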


By using scaling argument, we can derive the next lemma as a direct consequence of the above results.
\begin{lemma} \label{parlem1}
Let $ \Omega = B_{12 \sqrt{n}}^{+} \times (0,13] $, $0<r \le 1$, and $(x_{0},t_{0}) \in T_{12\sqrt{n}} \times (0,13] $ such that $ r\Omega(x_{0},t_{0})= B_{12 r \sqrt{n}}^{+} \times (t_{0} , t_{0}+13r^{2}] \subset \Omega$.
Assume that $u \in \overline{S} (f)$ in $  r \Omega(x_{0},t_{0}) $, $u \in C(\Omega ) $ and  $||u||_{ L^{\infty}( \Omega )} \le 1$.

Then there exist universal constants $M>1$ and $ 0< \sigma < 1$ such that if
 $$\bigg(\kint_{  r \Omega(x_{0},t_{0})} |f(x,t)|^{n+1}dxdt\bigg)^{\frac{1}{n+1}} \le 1,$$ then we have
\begin{align} \label{pardiest} \frac{|\underline{G}_{M} (u, \Omega ) \cap ( K_{r}^{n-1} \times (0,r) \times (0,r^{2})+ (x_{1},t_{1}))|}{|K_{r}^{n-1} \times (0,r) \times (0,r^{2})|} \ge  1- \sigma, 
\end{align}
whenever $( x_{1},t_{1}) \in (B_{9\sqrt{n}}(x_{0}) \cap \{ x_{n} \ge 0 \}) \times [t_{0}, t_{0}+10r^{2}]$.
\end{lemma}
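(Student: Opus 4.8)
The plan is to deduce Lemma~\ref{parlem1} from the fixed-scale Hessian estimates in Lemmas~\ref{pardibd} and~\ref{pardiin} by means of a parabolic dilation, after passing to complements. Since $|K_r^{n-1}\times(0,r)\times(0,r^2)|=r^{n+2}$ and $\underline G_M=\Omega\setminus\underline A_M$, it is enough to produce universal $M>1$ and $\sigma\in(0,1)$ with
\[
\bigl|\underline A_M(u,\Omega)\cap\bigl(K_r^{n-1}\times(0,r)\times(0,r^2)+(x_1,t_1)\bigr)\bigr|\le\sigma\, r^{n+2}.
\]
Writing $z_\ast:=(x_1',0)$ for the projection of $x_1$ onto $\{x_n=0\}$, I would introduce the dilated function $v(y,s):=u\bigl(z_\ast+ry,\ t_0+r^2 s\bigr)$, which, by parabolic scaling, belongs to $\overline S(\tilde f)$ with $\tilde f(y,s)=r^2 f(z_\ast+ry,\,t_0+r^2 s)$ on the image of $r\Omega(x_0,t_0)$ under $(x,t)\mapsto\bigl((x-z_\ast)/r,(t-t_0)/r^2\bigr)$, while $v$ stays continuous and $\|v\|_{L^\infty}\le\|u\|_{L^\infty(\Omega)}\le1$. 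The specific numerology in the hypotheses (the radii $12r\sqrt n$, $9r\sqrt n$ and the time lengths $13r^2$, $10r^2$) is exactly what makes that dilated domain a half-ball of radius $12\sqrt n$ based on $\{x_n=0\}$ whose center lies within distance $9\sqrt n$ of the origin, over a time interval containing $(0,13]$; consequently it contains, with room of order $1$, the model cylinder of whichever of Lemmas~\ref{pardibd} or~\ref{pardiin} is relevant, while the image of $K_r^{n-1}\times(0,r)\times(0,r^2)+(x_1,t_1)$ is a translate of the unit box $K_1^{n-1}\times(0,1)\times(0,1)$ that can be placed inside the inner cylinder appearing in that lemma's conclusion.

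Next I would verify that the rescaled inhomogeneity is universally bounded. A change of variables gives $\|\tilde f\|_{L^{n+1}}^{n+1}=r^{n}\int|f|^{n+1}$ over the relevant part of $r\Omega(x_0,t_0)$, and the normalization $\bigl(\kint_{r\Omega(x_0,t_0)}|f|^{n+1}\bigr)^{1/(n+1)}\le1$ means $\int_{r\Omega(x_0,t_0)}|f|^{n+1}\le|r\Omega(x_0,t_0)|\le C(n)\, r^{n+2}$, so that $\|\tilde f\|_{L^{n+1}}\le C(n)\, r^2\le C(n)$ because $r\le1$. I would then split according to the dilated height $h:=x_{1,n}/r$. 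If $h$ is below a universal threshold, the box lies near the flat boundary and Lemma~\ref{pardibd}, applied to $v$ (after the elementary scaling/translation needed to fit the box into the inner cylinder and after absorbing the universally bounded factor $(1+\|\tilde f\|_{L^{n+1}})^\mu$ into the constant), gives, for every $\tau>0$,
\[
\bigl|\underline A_{\tau}\bigl(v,\,K_4^{n-1}\times(0,2)\times(0,2]\bigr)\cap\bigl(K_2^{n-1}\times(0,1)\times(0,1]\bigr)\bigr|\le\frac{C(n,\lambda,\Lambda)}{\tau^{\mu}};
\]
if $h$ is above that threshold, the box has a full parabolic neighbourhood inside the dilated domain and the interior Lemma~\ref{pardiin} gives the analogous bound. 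Either way the constant does not depend on $r$, $(x_0,t_0)$ or $(x_1,t_1)$.

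Finally one transfers the estimate back. Undoing the dilation turns a concave paraboloid touching $v$ from below with opening $\tau$ into one touching $u$ from below with opening $\tau r^{-2}$ and lying below $u$ on the dilated-back model cylinder $W\subset r\Omega(x_0,t_0)\subset\Omega$; because this paraboloid is steep it already lies below $-1\le u$ the instant one leaves $W$, hence below $u$ on the whole past slice $\Omega_t$. Thus every point of the tiny box furnished by Lemma~\ref{pardibd} (resp.\ Lemma~\ref{pardiin}) belongs to $\underline G_M(u,\Omega)$ for the corresponding opening $M$; comparing the volume $r^{n+2}$ of the box with the unit volume of its dilated image and inserting the displayed estimate yields $\sigma=C(n,\lambda,\Lambda)\,\tau^{-\mu}$, which is made as small as we wish---in particular strictly less than $1$---by fixing $\tau$, hence $M$, a sufficiently large universal constant. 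The step I expect to cost the most effort is precisely this bookkeeping: checking that the stated radii leave margins of order $r$ so that, after dilation, the suitably placed model cylinders sit inside the region where $u$ actually solves the equation, and arranging the near-boundary and the interior cases so that a single pair $(M,\sigma)$ serves both.
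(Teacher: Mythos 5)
Your dilation $v(y,s):=u(z_\ast+ry,\,t_0+r^2s)$ keeps $\|v\|_{L^\infty}\le 1$, and your observation that a concave paraboloid of opening $\tau$ touching $v$ from below pulls back to one of opening $\tau r^{-2}$ touching $u$ is correct. But that is exactly where the argument breaks: the opening obtained for $u$ is $\tau r^{-2}$, which blows up as $r\to 0$, whereas the lemma demands a \emph{universal} $M$. Since $\underline G_{M'}(u,\Omega)$ is increasing in $M'$, your lower bound is a lower bound on the \emph{larger} set $\underline G_{\tau r^{-2}}(u,\Omega)$ and does not control $\underline G_{M}(u,\Omega)$ for any fixed $M\le\tau r^{-2}$. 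Equivalently, writing $s=\tau r^{-2}$, what you actually prove is
\begin{equation*}
\bigl|\underline A_{s}(u,\Omega)\cap\bigl(K_r^{n-1}\times(0,r)\times(0,r^2)+(x_1,t_1)\bigr)\bigr|\le C\,(s r^{2})^{-\mu}\,r^{n+2},
\end{equation*}
and to make the right-hand side smaller than $\sigma r^{n+2}$ one must take $s\gtrsim r^{-2}$, not a universal constant. So the final ``fixing $\tau$, hence $M$'' step does not go through.

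The scaling that preserves openings is $v(y,s)=r^{-2}u(z_\ast+ry,\,t_0+r^2s)$, but then $\|v\|_{L^\infty}$ is only $O(r^{-2})$ and Lemmas~\ref{pardiin}--\ref{pardibd} no longer apply directly. This is precisely what the extra hypothesis in Lemma~\ref{palemh} is for: the assumption $G_1(u,\Omega)\neq\varnothing$ produces a quadratic barrier $|u-L|\le\tfrac12\bigl(|x-x_2|^2-(t-t_2)\bigr)$ on all of $\Omega$, which when restricted to a cylinder of size $\sim r$ gives $|u-L|\lesssim r^2$, so that the $r^{-2}$-normalization of $u-L$ is again bounded by a constant and the opening is preserved. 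No such control is available under the hypotheses of Lemma~\ref{parlem1}, and your proof needs to say where the uniform bound on the \emph{correctly} rescaled function comes from; as written it does not, and the gap cannot be closed by choosing $\tau$ large.
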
 


The next lemma shows that if there is a point with opening $1$, then the density of `good sector' is guaranteed large enough.
\begin{lemma} \label{palemh}
Under the same hypotheses as in Lemma \ref{parlem1},
we further assume that $u \in S^{\ast}(f)$ in $ r \Omega(x_{0},t_{0}) $, $u \in C(\Omega ) $, and
$$ G_{1} (u, \Omega ) \cap ( K_{3r}^{n-1} \times (0,3r) \times (r^{2},10r^{2})+ (\tilde{x}_{1} ,\tilde{t}_{1}))  \neq \varnothing$$
for some $(\tilde{x}_{1} ,\tilde{t}_{1}) \in (B_{9r \sqrt{n}}(x_{0}) \cap \{ x_{n} \ge 0 \}) \times  [t_{0}, t_{0}+5r^{2}] $.

Then there exist universal constants $M>1$ and $0< \sigma < 1$ such that if
$$\bigg(\kint_{  r \Omega(x_{0},t_{0})} |f(x,t)|^{n+1}dxdt\bigg)^{\frac{1}{n+1}} \le 1,$$ then we have
 $$ \frac{ |G_{M} (u, \Omega ) \cap ( K_{r}^{n-1} \times (0,r) \times (0,r^{2})+ (x_{1},t_{1}))|}{|K_{r}^{n-1} \times (0,r) \times (0,r^{2})|} \ge 1 - \sigma$$
 for any $ ( x_{1},t_{1}) \in  (B_{9 r\sqrt{n}}(x_{0}) \cap \{ x_{n} \ge 0 \} ) \cap [t_{0}, \tilde{t}_{1}]  $. 
\end{lemma}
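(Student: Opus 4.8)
The plan is to deduce this "good sector propagation" lemma from the density estimate of Lemma \ref{parlem1} by an iterative (chain of overlapping cylinders) argument, in the spirit of the classical $W^{2,p}$ theory of Caffarelli--Cabr\'e adapted to the parabolic boundary setting. The geometric idea is simple: Lemma \ref{parlem1} already tells us that the good set $\underline{G}_M(u,\Omega)$ has density at least $1-\sigma$ in \emph{every} small cylinder $K_r^{n-1}\times(0,r)\times(0,r^2)+(x_1,t_1)$ sitting over the flat boundary; what is extra here is the two-sided statement (density of $G_M = \underline{G}_M\cap\overline{G}_M$, not just $\underline{G}_M$) together with the fact that, starting from the \emph{single} point in $G_1$ at the prescribed larger scale $3r$, we can control the opening at scale $r$ everywhere below that point in time. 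First I would observe that, since $u\in S^*(f)$, we may run the argument of Lemma \ref{parlem1} simultaneously for $u$ and $-u$: $u\in\underline S(-|f|)\subset\overline S(|f|)$ and $-u\in\overline S(|f|)$, so Lemma \ref{parlem1} (applied with $|f|$ in place of $f$, and the normalization hypothesis is symmetric) yields the density bound $1-\sigma$ for both $\underline G_M(u,\Omega)$ and $\overline G_M(u,\Omega)$ in the relevant small cylinders; intersecting and adjusting $\sigma$ (replace $\sigma$ by $2\sigma$, then rescale the universal constants) gives density $\ge 1-\sigma$ for $G_M(u,\Omega)$.

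The role of the nonempty-intersection hypothesis $G_1(u,\Omega)\cap(K_{3r}^{n-1}\times(0,3r)\times(r^2,10r^2)+(\tilde x_1,\tilde t_1))\ne\varnothing$ is to \emph{upgrade} the normalization $\|u\|_{L^\infty}\le 1$ locally: if $(\bar x,\bar t)$ is a point in that intersection, then there exist a convex paraboloid $P^+$ and a concave paraboloid $P^-$, both with opening $1$, touching $u$ at $(\bar x,\bar t)$ from above/below on the appropriate past-set. Subtracting the affine part of, say, $P^-$ and using the opening-$1$ bound on $P^+-P^-$, one gets that on a cylinder of radius comparable to $r$ around $(x_1,t_1)$ (for any admissible $(x_1,t_1)$, which by hypothesis lies in the past of $\tilde t_1$ and within $B_{9r\sqrt n}(x_0)$ spatially, hence inside the region where the barriers are valid) the oscillation of the modified function $v:=u-\ell$ is $\le C r^2$ for a universal $C$. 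Rescaling $v$ by $w(x,t)=v(x_1+ry, t_1+r^2s)/(Cr^2)$ produces a function with $\|w\|_{L^\infty}\le 1$ on a fixed cylinder, still in $S^*(\tilde f)$ with $\|\tilde f\|_{L^{n+1}}\le 1$ after the scaling (here the scaling-invariance of $S^*$ and of the $L^{n+1}$ average, already exploited in deriving Lemma \ref{parlem1}, is what makes this work). Applying Lemma \ref{parlem1} to $w$ and translating back — noting that $G_M$ transforms appropriately under affine-plus-parabolic-scaling changes of variables, with the opening $M$ scaling correctly — yields the claimed density $1-\sigma$ for $G_M(u,\Omega)$ in $K_r^{n-1}\times(0,r)\times(0,r^2)+(x_1,t_1)$.

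The main obstacle, and the step I would spend the most care on, is the bookkeeping of the several nested radii and time-intervals — $B_{12\sqrt n}^+$, $B_{9\sqrt n}(x_0)$, the scale-$3r$ cylinder where the touching point lives, the scale-$r$ cylinders where we want the conclusion, and the time constraint $t_1\le\tilde t_1$ — to make sure that every translated/rescaled copy of the configuration in Lemma \ref{parlem1} genuinely sits inside $\Omega$ and inside the region of validity of the paraboloid barriers, so that no constant that is claimed universal secretly depends on $r$ or on the position. Concretely one must check: (i) the rescaled domain $r\Omega(x_0,t_0)$ still contains the rescaled small cylinder; (ii) the condition $(\tilde x_1,\tilde t_1)\in (B_{9r\sqrt n}(x_0)\cap\{x_n\ge0\})\times[t_0,t_0+5r^2]$ together with $(x_1,t_1)\in(B_{9r\sqrt n}(x_0)\cap\{x_n\ge0\})\times[t_0,\tilde t_1]$ places $(x_1,t_1)$ in the past of the touching point, which is exactly what is needed for the concave paraboloid (defined only on the past-set $U_{t}$) to control $u$ near $(x_1,t_1)$; and (iii) the opening obtained after rescaling is still a \emph{universal} $M>1$, which forces choosing the final $M$ as the product of the $M$ from Lemma \ref{parlem1} with the absorbed constant $C$ from the oscillation estimate. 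Once these inclusions are verified the argument is a direct citation of Lemma \ref{parlem1}; the lemma is essentially a packaging statement, and the content is entirely in the geometry of the cylinders and the past-set structure of parabolic paraboloids.
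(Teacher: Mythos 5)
Your plan — extract the affine part $L$ from the opening-$1$ paraboloids at the touching point, normalize $v=(u-L)/C$, apply Lemma \ref{parlem1} to $\pm v$ to get the two-sided density for $G_M$, and unscale — is the same as the paper's proof, and the ``main obstacle'' bookkeeping you flag at the end is indeed where the care is needed. There is, however, one concrete step you gloss over that the paper handles explicitly and that you would need to make precise: applying Lemma \ref{parlem1} to $v$ normalized to $\|v\|_{L^\infty}\le 1$ on $B^+_{12r\sqrt n}(x_0)\times(t_0,t_2)$ only yields density of $G_M\bigl(v,\, B^+_{12r\sqrt n}(x_0)\times(t_0,t_2)\bigr)$, i.e.\ the good set \emph{relative to that small cylinder}, because that cylinder is the ``$\Omega$'' in the invocation of Lemma \ref{parlem1}. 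But the lemma's conclusion is about $G_M(u,\Omega)$ for the big $\Omega=B^+_{12\sqrt n}\times(0,13]$, and since membership in $G_M(\cdot,U)$ asks the paraboloid to dominate on all of $U_{t}$, one has $G_M(v,\Omega)\subset G_M(v,\text{small cylinder})$: the estimate does not transfer for free. Your argument, as written, only extracts a local oscillation bound $\le Cr^2$ from the touching paraboloids, which is not enough to bridge this. The paper's bridge is the \emph{global} quadratic growth estimate \eqref{esolem}, $|v(x,t)|\le |x|^2+t_2-t$ on all of $\Omega_{t_2}$ outside the small ball — this is what the two touching paraboloids $P^\pm$ at $(x_2,t_2)$ (with matching affine parts, since $P^-\le u\le P^+$ touch at the same point) actually give on the whole past-set, not just locally. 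Given \eqref{esolem}, a paraboloid certifying $(y,s)\in G_M(v,\text{small cylinder})$ can be widened to opening $N\ge M$ universal so as to dominate $v$ on all of $\Omega_{s}$, giving $(y,s)\in G_N(v,\Omega)$, and then $G_N(v,\Omega)=G_{NC(n)}(u,\Omega)$ finishes. You clearly have the right ingredients in mind (you mention the opening-$1$ bound on $P^+-P^-$ and the ``region of validity of the paraboloid barriers''), so the fix is only to state the global growth bound and run the widening argument rather than stopping at local oscillation control.
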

\begin{proof}
Let $(x_{2},t_{2}) \in G_{1} (u, \Omega ) \cap ( K_{3r}^{n-1} \times (0,3r) \times (r^{2},10r^{2})+ (\tilde{x}_{1} ,\tilde{t}_{1}))  $.
By the definition of $G_{1}$, we have paraboloids with opening $1$ touching $u$ at $(x_{2},t_{2})$ from above and below.
Then we can find a linear function (on $x$) $L$ such that
$$ |u(x,t)- L(x)| \le \frac{1}{2}(|x-x_{2}|^{2} -(t-t_{2}) ).$$

Define $v(x,t) = (u(x,t)- L(x)) / C$ where $C=C(n )$ is a constant so that 
\begin{align*}
||v||_{L^{\infty}(B^{+}_{12r\sqrt{n}}(x_{0}) \times (t_{0}, t_{2}))} \le 1
\end{align*}
and
\begin{align} \label{esolem}
|v(x,t)|  \le  |x|^{2} + t_{2} - t \qquad \textrm{in} \  (B^{+}_{12\sqrt{n}} \backslash B^{+}_{12r\sqrt{n}}(x_{0})) \times (0, t_{2}) .
\end{align}
Now we can see that $v \in S ^{\ast}(f/C) $ in $ B^{+}_{12r\sqrt{n}} \times [t_{0},t_{2})$.
Then we have
$$ \frac{ |G_{M} (v, B^{+}_{12r\sqrt{n}} \times (t_{0},t_{1})) \cap ( K_{r}^{n-1} \times (0,r) \times (0,r^{2})+ (x_{1},t_{1}))|}{ |K_{r}^{n-1} \times (0,r) \times (0,r^{2})|} \ge 1- \sigma. $$
 by Lemma \ref{parlem1}.

Combining the above estimate with \eqref{esolem}, we observe that
$$ \frac{ |G_{N} (v, \Omega ) \cap ( K_{r}^{n-1} \times (0,r) \times (0,r^{2})+ (x_{1},t_{1}))| }{|K_{r}^{n-1} \times (0,r) \times (0,r^{2})|}\ge 1- \sigma $$
for some $N \ge M$.
We also deduce that
$$ G_{M}(v, \Omega  )=G_{MC(n)}(u, \Omega ),$$
and this completes the proof.
\end{proof}

In the elliptic case, Calder\'{o}n-Zygmund decomposition is very useful in establishing $W^{2,p}$ theory. 
We intend to apply this tool in the parabolic case.
One can find the following definition and parabolic decomposition lemma in \cite{MR3185332}.
\begin{definition} Given $m \in \mathbb{N}$, and a dyadic cube $K$ of $Q$, 
the set $\overline{K}^{m}$ is obtained by stacking $m$ copies of its predecessor  $\overline{K}$.
More precisely, if the predecessor $\overline{K}$ has the form $ L \times (a,b) $, then 
$\overline{K}^{m} = L \times  (b,b+m(b-a)) $.
\end{definition}

\begin{lemma} \label{calzypa}
Let $m \in \mathbb{N}$. 
Consider two subsets $A$ and $B$ of a cube $Q$.
Assume that $|A| \le \delta |Q|$ for some $\delta \in (0,1)$.
Assume also the following: for any dyadic cube $K \subset Q $,
$$ |K \cap A| > \delta |K| \Rightarrow \overline{K}^{m} \subset B.$$
Then $|A| \le \frac{m+1}{m}\delta|B| $.
\end{lemma}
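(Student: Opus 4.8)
The statement is the parabolic analogue of the classical Calder\'on--Zygmund cube decomposition, and I would prove it by the stopping-time method, the only new feature being that the predecessor is replaced by the forward time-stack $\overline{K}^{m}$. Call a dyadic cube $K\subseteq Q$ \emph{bad} if $\abs{A\cap K}>\delta\abs{K}$. Since $\abs{A}\le\delta\abs{Q}$, the cube $Q$ is not bad; subdividing dyadically and keeping the first bad cube along each chain, let $\{K_{i}\}$ be the pairwise disjoint \emph{maximal} bad subcubes of $Q$. By the (dyadic) Lebesgue differentiation theorem almost every point of $A$ is a point of density $1$, hence lies in arbitrarily small dyadic cubes of $A$-density exceeding $\delta$, hence in some maximal bad cube; thus $\abs{A\setminus\bigcup_{i}K_{i}}=0$. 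By maximality each $K_{i}$ has a predecessor $\overline{K_{i}}$ that is not bad, i.e.\ $\abs{A\cap\overline{K_{i}}}\le\delta\abs{\overline{K_{i}}}$, and the hypothesis applied to $K_{i}$ gives $\overline{K_{i}}^{m}\subseteq B$.

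To turn this into a measure estimate I would pass to a disjoint family: let $\{R_{j}\}$ be the maximal dyadic cubes that occur as predecessors of bad cubes. These are pairwise disjoint; they cover $A$ up to a null set, since $A\subseteq\bigcup_{i}K_{i}\subseteq\bigcup_{i}\overline{K_{i}}\subseteq\bigcup_{j}R_{j}$; none of them is bad, for if $R_{j}$ were bad then its own predecessor would be the predecessor of a bad cube and would strictly contain $R_{j}$, contradicting maximality; and each satisfies $R_{j}^{m}\subseteq B$ (apply the hypothesis to a bad child of $R_{j}$, which exists because $R_{j}$ is a predecessor of a bad cube). Consequently
\[
\abs{A}=\sum_{j}\abs{A\cap R_{j}}\le\delta\sum_{j}\abs{R_{j}}=\delta\,\abs{\,\bigcup_{j}R_{j}\,},
\]
and it remains to prove $\abs{\bigcup_{j}R_{j}}\le\frac{m+1}{m}\abs{B}$.

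Since $\bigcup_{j}R_{j}^{m}\subseteq B$, it suffices to show $\abs{\bigcup_{j}R_{j}}\le\frac{m+1}{m}\abs{\bigcup_{j}R_{j}^{m}}$. The stacking $R\mapsto R^{m}$ acts only in the time variable and leaves the spatial cross-section unchanged, so this can be checked on each spatial fibre $\{x\}\times\R$ and then integrated in $x$; on a fibre the cubes $R_{j}$ through $x$ restrict to pairwise disjoint time intervals, and the estimate reduces to the one-dimensional inequality
\[
\abs{\,\bigcup_{\ell}I_{\ell}^{m}\,}\ \ge\ \frac{m}{m+1}\,\abs{\,\bigcup_{\ell}I_{\ell}\,}
\]
for any family of pairwise disjoint intervals $\{I_{\ell}\}$, where $I_{\ell}^{m}$ denotes the union of the $m$ consecutive translates of $I_{\ell}$ placed immediately to its right. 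This last inequality is the technical heart, and it is where I expect the difficulty to lie: it is sharp (take intervals of lengths in geometric ratio $\frac{m}{m+1}$ accumulating at a point, whose forward stacks are then nested, so that the union of the stacks has measure exactly $\frac{m}{m+1}$ times that of the union of the intervals), and a naive bounded-overlap bound for the stacks is unavailable, since a single point can lie in arbitrarily many $I_{\ell}^{m}$. Hence the one-dimensional bound must be obtained by a global covering argument — comparing, within the "columns" $I_{\ell}\cup I_{\ell}^{m}$ of length $(m+1)\abs{I_{\ell}}$ (in which the stack occupies the fraction $\frac{m}{m+1}$), the length covered by the bottom slices with that covered by the top $m$ slices — rather than by tracking multiplicities; this is essentially the argument in \cite{MR3185332}.
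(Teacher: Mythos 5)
The paper does not prove Lemma~\ref{calzypa}: it is stated and attributed to Imbert--Silvestre \cite{MR3185332}, so there is no ``paper's proof'' to compare yours against; I will instead assess your proposal on its own merits.

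Your skeleton is sound. The stopping-time family $\{K_i\}$ of maximal bad cubes, the passage to the maximal predecessors $\{R_j\}$ (which are pairwise disjoint, cover $A$ up to a null set, are not bad, and satisfy $R_j^m\subseteq B$), the resulting bound $|A|\le\delta\sum_j|R_j|$, and the reduction via Fubini in the time variable to the one-dimensional inequality
\[
\Bigl|\bigcup_\ell I_\ell^m\Bigr|\;\ge\;\frac{m}{m+1}\Bigl|\bigcup_\ell I_\ell\Bigr|
\]
for pairwise disjoint intervals $I_\ell$ with forward $m$-stacks $I_\ell^m$ are all correct. Your sharpness example (lengths in geometric ratio $m/(m+1)$, nested stacks) is right, and you correctly observe that the stacks can have unbounded multiplicity, so no pointwise overlap bound is available.

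The one genuine gap is that you never prove the one-dimensional inequality, and your ``columns $I_\ell\cup I_\ell^m$'' heuristic does not constitute an argument: those columns overlap one another, so ``comparing the bottom length with the top length'' inside them is exactly the multiplicity count you just ruled out. The clean route is to decompose not the columns but the \emph{union of the stacks}: write the open set $V:=\bigcup_\ell I_\ell^m$ as a disjoint union of its connected components $J_k=(a_k,b_k)$. Each $I_\ell^m$, being a connected subset of $V$, lies in a unique $J_{k(\ell)}$; its left endpoint, which is the right endpoint $r_\ell$ of $I_\ell$, satisfies $r_\ell\ge a_{k(\ell)}$, and its right endpoint $r_\ell+m|I_\ell|\le b_{k(\ell)}$, so $|I_\ell|\le\frac{1}{m}(b_{k(\ell)}-a_{k(\ell)})$ and hence
\[
I_\ell\subseteq\Bigl(a_{k(\ell)}-\tfrac{1}{m}\bigl(b_{k(\ell)}-a_{k(\ell)}\bigr),\,b_{k(\ell)}\Bigr),
\]
an interval of length $\frac{m+1}{m}|J_{k(\ell)}|$. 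Since the $I_\ell$ are pairwise disjoint, $\sum_{\ell:\,k(\ell)=k}|I_\ell|\le\frac{m+1}{m}|J_k|$ for each $k$, and summing over $k$ gives $|\bigcup_\ell I_\ell|\le\frac{m+1}{m}|V|$, which is the required bound. With this substituted for your heuristic, the proof is complete and matches the argument of \cite{MR3185332}.
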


Using the above lemma, we can prove the following result. 
\begin{lemma} \label{calzypaap}
Under the same hypotheses as in Lemma \ref{parlem1},
we further assume that $u \in \overline{S} (f)$ in $ r \Omega(x_{0},t_{0}) $, $u \in C(\Omega ) $, $||u||_{ L^{\infty}( \Omega )} \le 1$ and  $$\bigg(\kint_{  r \Omega(x_{0},t_{0})} |f(x,t)|^{n+1}dxdt\bigg)^{\frac{1}{n+1}} \le 1.$$

Extend $f$ by zero outside $ r \Omega(x_{0},t_{0})  $ and define
\begin{align*} A:= A_{M^{k+1}}(u,\Omega ) \cap (K_{r}^{n-1} \times (0,r) \times (t_{1} ,t_{1}+r^{2}) ),
\end{align*}
\begin{align*} B:= &(A_{M^{k}}(u,\Omega ) \cap (K_{r}^{n-1} \times (0,r) \times (t_{1} ,t_{1}+r^{2}))\\ & \cup \{ (x,t) \in K_{r}^{n-1} \times (0,r) \times (t_{0} ,t_{0}+r^{2}) : M(|f|^{n+1})(x,t) \ge (c_{0}M^{k})^{n+1} \}
\end{align*}
for any $k \in \mathbb{N}_{0}$ and $t_{1} \in [t_{0}, t_{0}+5r^{2}]$.

Then $|A| \le 2 \sigma |B|$, where $c_{0}=c_{0}(n )$, $ 0< \sigma < 1$ and $M>1$ are universal.
\end{lemma}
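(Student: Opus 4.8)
The plan is to deduce the estimate from the parabolic Calder\'on--Zygmund decomposition, Lemma \ref{calzypa}, applied on the parabolic half-cube $Q:=K_{r}^{n-1}\times(0,r)\times(t_{1},t_{1}+r^{2})$ with the choice $\delta:=\sigma$, where $\sigma\in(0,1)$ is the universal constant produced by Lemma \ref{parlem1}. For this I have to check the two hypotheses of Lemma \ref{calzypa}: first, that $|A|\le\sigma|Q|$; second, that for every dyadic subcube $K\subset Q$ one has the implication $|K\cap A|>\sigma|K|\ \Rightarrow\ \overline{K}^{m}\subset B$, for a universal stacking height $m$ to be fixed in the course of the argument. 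Granting these, Lemma \ref{calzypa} yields $|A|\le\tfrac{m+1}{m}\sigma|B|\le 2\sigma|B|$, which is exactly the claim.

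For the first hypothesis, I use Lemma \ref{parlem1} directly: since $u\in\overline{S}(f)$, $\|u\|_{L^{\infty}(\Omega)}\le1$, and the relevant $(n+1)$-average of $f$ over $r\Omega(x_{0},t_{0})$ is $\le1$, Lemma \ref{parlem1} applied with the translation vector corresponding to $(0',0,t_{1})$ (admissible because $t_{1}\in[t_{0},t_{0}+5r^{2}]$) gives $|\underline{G}_{M}(u,\Omega)\cap Q|\ge(1-\sigma)|Q|$, i.e. $|\underline{A}_{M}(u,\Omega)\cap Q|\le\sigma|Q|$. Since a concave (resp. convex) paraboloid of larger opening is a less restrictive barrier, the sets $\underline{G}_{s}(u,\Omega)$ and $\overline{G}_{s}(u,\Omega)$ increase with $s$, hence $A_{M^{k+1}}(u,\Omega)\subset A_{M}(u,\Omega)$ for every $k\ge0$ because $M>1$; therefore $|A|\le|A_{M}(u,\Omega)\cap Q|\le\sigma|Q|$.

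The substance of the proof is the dyadic implication. I argue by contradiction: suppose $K\subset Q$ is dyadic with $|K\cap A|>\sigma|K|$ but $\overline{K}^{m}\not\subset B$, and pick $(\bar x,\bar t)\in\overline{K}^{m}\setminus B$. Then $(\bar x,\bar t)\notin A_{M^{k}}(u,\Omega)$, so there are paraboloids of opening $M^{k}$ touching $u$ at $(\bar x,\bar t)$ from above and from below on $\Omega_{\bar t}$; subtracting their common tangent plane $L$ one controls the oscillation of $u-L$ by $\tfrac{M^{k}}{2}(|x-\bar x|^{2}-(t-\bar t))$. Moreover $M(|f|^{n+1})(\bar x,\bar t)<(c_{0}M^{k})^{n+1}$. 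I then translate $(\bar x,\bar t)$ to a reference position and rescale to the scale of $K$: the function $v:=(u-L)/(c\,M^{k})$, with a suitable dimensional constant $c$, is bounded by $1$ on the rescaled domain, lies in $\overline{S}$ (resp. $S^{\ast}$) of a right-hand side whose $(n+1)$-average is $\le1$, and carries a point of $G_{1}$ coming from the rescaled opening-$M^{k}$ paraboloids. The stacking $\overline{K}^{m}$, which reaches $m$ time-steps ahead of $K$, is used twice here: it forces the parabolic cylinder $Q_{\bar\rho}(\bar x,\bar t)$ on which the maximal function is evaluated to contain the rescaled domain (so the bound on $M(|f|^{n+1})$ controls the $(n+1)$-average of the rescaled $f$, once $c_{0}$ is chosen universal), and it guarantees that $K$ sits in the past region of $(\bar x,\bar t)$ required by the geometry of Lemmas \ref{parlem1} and \ref{palemh}. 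Applying Lemma \ref{palemh} (or Lemma \ref{parlem1} in the one-sided reading) to $v$ gives that the good set of opening $M$ for $v$, equivalently of opening $M^{k+1}$ for $u$ with the same universal $M$, has density at least $1-\sigma$ on $K$; hence $|K\cap A|=|A_{M^{k+1}}(u,\Omega)\cap K|\le\sigma|K|$, a contradiction. Fixing $m$ and $c_{0}$ as the universal numbers that make both geometric requirements hold completes the verification, and Lemma \ref{calzypa} then finishes the proof.

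\textbf{Main obstacle.} The delicate point is precisely the bookkeeping in the implication step: one must pin down the stacking height $m$ and the constant $c_{0}$ so that a single point escaping $B$ simultaneously provides (a) a parabolic cylinder over which $M(|f|^{n+1})$ dominates the rescaled right-hand side, and (b) enough temporal room for the (possibly half-)cube $K$ to fall inside the region on which Lemmas \ref{parlem1} and \ref{palemh} actually deliver their density bounds. The boundary case, where $K$ rests on $\{x_{n}=0\}$, requires no separate treatment because those two lemmas are already boundary statements.
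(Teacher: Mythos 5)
Your proposal follows the paper's own proof essentially verbatim: you invoke the parabolic Calder\'on--Zygmund lemma (Lemma~\ref{calzypa}) with $\delta=\sigma$ and $Q=K_r^{n-1}\times(0,r)\times(t_1,t_1+r^2)$, verify $|A|\le\sigma|Q|$ via Lemma~\ref{parlem1}, and establish the dyadic implication by contradiction---taking a point of $\overline{K}^m\setminus B$, subtracting the tangent plane from the touching paraboloids of opening $M^k$, normalizing by $M^k$ and the scale of $K$, and applying Lemma~\ref{palemh} to contradict $|K\cap A|>\sigma|K|$. The only cosmetic difference is that the paper fixes the stacking height at $m=1$ outright (whence the constant $2\sigma$), while you leave $m$ to be chosen; since $\frac{m+1}{m}\le 2$ for any $m\ge1$, that freedom is harmless.
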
 
\begin{proof}
By the definition of $A $ and $B$, we know that $$A \subset B \subset  K_{r}^{n-1} \times (0,r) \times (t_{1} ,t_{1}+r^{2}). $$
We also have $|A| \le \sigma |K_{r}^{n-1} \times (0,r) \times (t_{1} ,t_{1}+r^{2})| $ from Lemma \ref{parlem1}. 
Now we prove that for any dyadic cube $K \subset Q$,
$$|K \cap A |> \sigma |K| \quad \textrm{implies} \quad  \overline{K}^{m} \subset B  $$
for some $m \in \mathbb{N}$.

Let $$K=(K_{r/2^{i}}^{n-1} \times (0,r/2^{i}) \times (0,r^{2}/2^{2i} )+(x_{2},t_{2})$$
be a dyadic cube with its predecessor $$\tilde{K}=(K_{r/2^{i-1}}^{n-1} \times (0,r/2^{i-1}) \times (0,r^{2}/2^{2(i-1)} )+(\tilde{x}_{2},\tilde{t}_{2})$$
for some $i \ge 1$.
Suppose that $K$ satisfies $|K \cap A |> \sigma |K| $ but $\overline{K}^{m} \not\subset B$ for any $m$.
Then there is a point $(x_{3},t_{3}) \in \overline{K}^{1} \backslash B$,
that is,
$$(x_{3},t_{3}) \in  \overline{K}^{1} \cap G_{M^{k}}(u,\Omega ) \quad \textrm{and} \quad M(|f|^{n+1})(x_{3},t_{3}) < (c_{0}M^{k})^{n+1}.$$
 
Now we define a transformation $T$ by $$T (y,s)=(\tilde{x}_{2}+2^{-i}y,\tilde{t}_{2}+2^{-2i}s),$$ and set
$ \tilde{u}(y,s) = 2^{2i}M^{-k}u(T(y,s))$ and $ \tilde{f}(y,s) = M^{-k}f(T(y,s))$.
Since $\overline{K}^{1} \subset  K_{r}^{n-1} \times (0,r) \times (t_{1}, t_{1}+r^{2}) $,
we can observe that
$$(r/2^{i})\Omega(\tilde{x}_{2},\tilde{t}_{2}) \subset   r \Omega(x_{1},t_{1}) $$
and $\tilde{u} \in  S ^{\ast}(\tilde{f})$ in $ r \Omega(0,0) $.
We also see that $|\tilde{x}_{2}- x_{3}| < 2^{-(i-1)}r $ and thus 
$B^{+}_{12r\sqrt{n}/2^{i}}(\tilde{x}_{2}) \subset K_{28r\sqrt{n}/2^{i}}^{n}(x_{3}) $.
Now we have the following estimate
$$\bigg(\kint_{  r \Omega(0,0)} |\tilde f(x,t)|^{n+1}dxdt\bigg)^{\frac{1}{n+1}} \le c_{0}C(n) \le 1$$
by direct calculations.
Note that we chose some sufficiently small $c_{0}$ in order to obtain the last inequality.

On the other hand, since $(x_{3},t_{3}) \in  \overline{K}^{1} \cap G_{M^{k}}(u,\Omega )$, we have
$$ G_{1}(\tilde{u}, T^{-1}\Omega ) \cap (K_{3r}^{n-1} \times (0,3r) \times (4r^{2},13r^{2}) ) \neq \varnothing$$
and then the hypothesis of Lemma \ref{palemh} is satisfied in $\Omega $.
Since $x_{2,n} \ge \tilde{x}_{2,n} $, $t_{2,n} \ge \tilde{t}_{2,n} $ and $ | x_{2} - \tilde{x}_{2}| \le r\sqrt{n}/2^{i} $,
we have $$(2^{i}(x_{2} - \tilde{x}_{2},2^{2i}(t_{2} - \tilde{t}_{2})) \in (B_{9r \sqrt{n}} \cap \{ x_{n} \ge 0 \} ) \times [0,3r^{2}].$$
Thus, we get the following quantity
$$ \frac{ |G_{M}(\tilde{u}, T^{-1}\Omega ) \cap (K_{r}^{n-1} \times (0,r) \times (0, r^{2}) + (2^{i}(x_{2}-\tilde{x}_{2}),2^{2i}(t_{2}-\tilde{t}_{2})) )|}{|K_{r}^{n-1} \times (0,r) \times (0, r^{2})|} $$
is less than  $1- \sigma$.
From this estimate, we have
$$|G_{M^{k+1}}(u,\Omega ) \cap K | \ge (1-\sigma) |K| ,$$
and it contradicts our assumption.
Therefore, we can conclude the proof.
\end{proof}

Finally, we get the estimate for the density of `bad sector'.
\begin{corollary} \label{parobdist}
Under the same hypotheses as in Lemma \ref{parlem1},
we further assume that $u \in  S ^{\ast}(f)$ in $ r\Omega(x_{0},t_{0}) $, $u \in C(\Omega ) $, and  $||u||_{ L^{\infty}( \Omega )} \le 1$.
Then there exist universal constants $C$ and $\mu$ such that if $$\bigg(\kint_{  r \Omega(x_{0},t_{0})} |f(x,t)|^{n+1}dxdt\bigg)^{\frac{1}{n+1}} \le 1,$$ then we have
$$ \frac{ |A_{s}(u,\Omega ) \cap (K_{r}^{n-1} \times (0,r) \times (0,r^{2}) + (x_{1},t_{1}))|}{|K_{r}^{n-1} \times (0,r) \times (0,r^{2})|} \le Cs^{-\mu}$$
for any  $(x_{1},t_{1}) \in (B_{9 r\sqrt{n}}(x_{0}) \cap \{ x_{n} \ge 0 \}) \times   [t_{0}, t_{0}+5r^{2}]$. 
\end{corollary}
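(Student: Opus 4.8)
The plan is to iterate the geometric decay furnished by Lemma~\ref{calzypaap} in the index $k$, turn the resulting summability into a power bound in $s$, and use the weak-type $(1,1)$ estimate of Proposition~\ref{mppara} to absorb the maximal-function term. Throughout, $M>1$, $0<\sigma<1$ and $c_{0}=c_{0}(n)$ denote the universal constants of Lemma~\ref{calzypaap}; note $2\sigma<1$, which is already required for Lemma~\ref{calzypa} to apply. Fix $(x_{1},t_{1})\in(B_{9r\sqrt{n}}(x_{0})\cap\{x_{n}\ge0\})\times[t_{0},t_{0}+5r^{2}]$, write $Q^{\sharp}:=(K_{r}^{n-1}\times(0,r)\times(0,r^{2}))+(x_{1},t_{1})$ for the reference cube in the statement, and set $a_{k}:=|A_{M^{k}}(u,\Omega)\cap Q^{\sharp}|$ for $k\in\mathbb{N}_{0}$. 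Extend $f$ by zero outside $r\Omega(x_{0},t_{0})$, put $g:=|f|^{n+1}$, and let $E_{k}:=\{(x,t):M(g)(x,t)\ge(c_{0}M^{k})^{n+1}\}$ (intersected with the bounded cube appearing in Lemma~\ref{calzypaap}). With this notation the set $B$ of Lemma~\ref{calzypaap} is contained in $(A_{M^{k}}(u,\Omega)\cap Q^{\sharp})\cup E_{k}$, so that lemma gives the recursion $a_{k+1}\le2\sigma\,a_{k}+2\sigma\,|E_{k}|$ for every $k\in\mathbb{N}_{0}$.

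Next I would bound $|E_{k}|$. The hypothesis $\bigl(\kint_{r\Omega(x_{0},t_{0})}|f|^{n+1}\,dx\,dt\bigr)^{1/(n+1)}\le1$ gives $\|g\|_{L^{1}(\mathbb{R}^{n}\times\mathbb{R})}=\|f\|_{L^{n+1}(r\Omega(x_{0},t_{0}))}^{n+1}\le|r\Omega(x_{0},t_{0})|\le C(n)r^{n+2}$, hence by the weak-$(1,1)$ inequality of Proposition~\ref{mppara}, $|E_{k}|\le C(n)(c_{0}M^{k})^{-(n+1)}\|g\|_{L^{1}}\le C(n)r^{n+2}M^{-(n+1)k}$. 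Now pick $\mu>0$ small enough that $\theta:=2\sigma M^{\mu}<1$ and $\mu<n+1$; since $\sigma$ and $M$ are universal, so is $\mu$. Multiplying the recursion by $M^{\mu(k+1)}$ and summing over $0\le k\le N$, the term $\theta\sum_{k}M^{\mu k}a_{k}$ is absorbed into the left-hand side (all $a_{k}$ are finite and $a_{0}\le|Q^{\sharp}|=r^{n+2}$), while $\sum_{k\ge0}M^{\mu(k+1)}|E_{k}|\le C(n)r^{n+2}\sum_{k\ge0}M^{(\mu-(n+1))k}<\infty$. Letting $N\to\infty$ yields $\sum_{k\ge1}M^{\mu k}a_{k}\le C(n)r^{n+2}$, and in particular $a_{k}\le C(n)r^{n+2}M^{-\mu k}$ for all $k\in\mathbb{N}_{0}$.

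Finally I would pass from the dyadic levels $s=M^{k}$ to arbitrary $s>0$, using that $s\mapsto A_{s}(u,\Omega)$ is nonincreasing: if a concave (resp.\ convex) paraboloid of opening $M$ touches $u$ from below (resp.\ above) on $\Omega\cap\{t\le t_{0}\}$, so does one of opening $M'\ge M$, since there $|x-x_{0}|^{2}-(t-t_{0})\ge0$ and the extra term $-\tfrac{M'-M}{2}(|x-x_{0}|^{2}-(t-t_{0}))$ has the right sign. Thus for $s\ge M$, choosing $k\in\mathbb{N}$ with $M^{k}\le s<M^{k+1}$, we obtain $|A_{s}(u,\Omega)\cap Q^{\sharp}|\le a_{k}\le C(n)r^{n+2}M^{-\mu k}\le C(n)M^{\mu}r^{n+2}s^{-\mu}$; dividing by $|Q^{\sharp}|=r^{n+2}$ gives the asserted density bound $\le Cs^{-\mu}$. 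For $0<s<M$ the bound is trivial after enlarging $C$, because the left-hand density never exceeds $1$. Since none of the constants depend on $(x_{1},t_{1})$, the estimate holds uniformly over the stated range of translation points.

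The step I expect to be the only real (if modest) obstacle is the bookkeeping in the choice of $\mu$: it must make the geometric part absorbable, i.e.\ $2\sigma M^{\mu}<1$ (this is where $2\sigma<1$ enters), and simultaneously keep the maximal-function tail summable, i.e.\ $\mu<n+1$; one also has to notice that the single hypothesis $\bigl(\kint_{r\Omega(x_{0},t_{0})}|f|^{n+1}\bigr)^{1/(n+1)}\le1$ does double duty, feeding Lemma~\ref{calzypaap} at every scale and bounding $\|g\|_{L^{1}}$. Everything else---monotonicity of $A_{s}$ in $s$, finiteness of the $a_{k}$, uniformity in $(x_{1},t_{1})$, and the reduction to dyadic $s$---is routine and parallels the elliptic argument; alternatively one could package the summability through Proposition~\ref{paralpeq}, but the direct geometric summation above is the shortest route.
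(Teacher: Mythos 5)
Your argument is correct and is essentially the paper's own: both start from the recursion $a_{k+1}\le 2\sigma\,a_k+2\sigma|E_k|$ of Lemma~\ref{calzypaap}, bound $|E_k|$ by the weak-$(1,1)$ estimate from Proposition~\ref{mppara} together with the normalization $\|f\|_{L^{n+1}(r\Omega(x_0,t_0))}^{n+1}\le|r\Omega(x_0,t_0)|$, and then extract the polynomial decay in $M^{-\mu k}$. The only difference is cosmetic bookkeeping: the paper unrolls the recursion into $\alpha_k\le(2\sigma)^k+\sum_{i<k}(2\sigma)^{k-i}\beta_i$ and then estimates, while you multiply by $M^{\mu(k+1)}$ and absorb, which avoids the harmless $(1+Ck)$ factor; both routes yield the same universal $\mu$ with $2\sigma M^{\mu}<1$ and $\mu<n+1$.
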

\begin{proof} 
Without loss of generality, we can assume that $(x_{0},t_{0})=(0,0)$. 
Let 
$$ \alpha_{k}=\frac{|A_{M^{k}} (u,\Omega ) \cap (K_{r}^{n-1} \times (0,r) \times (t_{1} ,t_{1}+r^{2}))|}{|K_{r}^{n-1} \times (0,r) \times (t_{1} ,t_{1}+r^{2})|} ,$$
$$ \beta_{k} \hspace{-0.2em} =\hspace{-0.2em} \frac{| \{ (x,t) \in K_{r}^{n-1} \times (0,r) \times (t_{0} ,t_{0}+r^{2}) : M(|f|^{n+1})(x,t) \ge (c_{0}M^{k})^{n+1} \}|}{|K_{r}^{n-1} \times (0,r) \times (t_{1} ,t_{1}+r^{2})|}. $$
By Lemma \ref{calzypaap}, we have $ \alpha_{k+1} \le 2\sigma (\alpha_{k}+\beta_{k}) $ for any $k \ge 0$. Then it can be derived directly that
$$ \alpha_{k} \le (2\sigma)^{k} + \sum_{i=0}^{k-1} (2\sigma)^{k-i}\beta_{i}.$$ 

On the other hand, we can also obtain
$$\beta_{i} \le C (c_{0}M^{i})^{-(n+1)}\frac{||f||_{L^{n+1}}^{n+1}}{r^{n+2}} \le CM^{-(n+1)i}$$
by using Proposition \ref{mppara}.
Thus, we can observe that 
$$ \alpha_{k} \le (2\sigma)^{k} + C \sum_{i=0}^{k-1} (2\sigma)^{k-i}M^{-(n+1)i} \le (1+ Ck) \max \{2\sigma, M^{-(n+1)}\}^{k},$$
and the right-hand side is estimated by $C(n)M^{-\mu k}$ for some sufficiently small $\mu$.  
We now finish the proof.
\end{proof}

\section{Boundary $W^{2,p}$-estimates}
In this section, we are concerned with $W^{2,p}$-regularity for the following problem
\begin{align} \label{paob_eq}
\left\{ \begin{array}{ll}
F(D^{2}u, x,t) - u_{t} = f & \textrm{in $Q_{1}^{+}$,}\\
\beta \cdot Du = 0 & \textrm{on $Q_{1}^{\ast}$.}\\
\end{array} \right. 
\end{align}

We first consider the `model problem'
\begin{align} \label{paromoeq}
\left\{ \begin{array}{ll}
F(D^{2}u) - u_{t} = f & \textrm{in $Q_{1}^{+}$,}\\
\beta \cdot Du = 0 & \textrm{on $Q_{1}^{\ast}$.}\\
\end{array} \right. 
\end{align}
 
We are interested in the case when $F$ is slightly perturbed in $x$ and $t$ from \eqref{paromoeq}.
Mentioned earlier in the introduction, it is already known that solutions of \eqref{paromoeq} have interior and boundary $C^{2,\alpha}$-regularity.
Since a solution of the model equation is regular enough to have $C^{1,1}$-regularity, we can expect and wish to prove that the solution of \eqref{paob_eq} enjoys the required $W^{2,p}$-regularity.

First of all, we need to describe a measurement for the perturbation of $F$. We denote an oscillation function for $F$ by
\begin{align} \label{defposc} \psi_{F}((x,t), (y,s)) := \sup_{X \in S(n) \backslash \{0\}} \frac{|F(X,x,t)-F(X,y,s)|}{||X||}.
\end{align}

The following is the main theorem of this section.
\begin{theorem}\label{paob_kthm}
Let $u$ be a viscosity solution of \eqref{paob_eq}
 where $F(X,x,t)$ is uniformly elliptic with $\lambda$ and $\Lambda$, convex in $X$, continuous in $X$, $x$ and $t$ and $F(0,x,t)= 0$, 
$\beta \in C^{2}(\overline{Q}_{1}^{\ast}) $ with $\beta \cdot \mathbf{n} \ge \delta_{0}$ for some $\delta_{0} > 0$, and $f \in L^{p}({Q_{1}^{+}}) \cap C({Q_{1}^{+}})$ for $ n +1< p < \infty $.
Then there exist $\epsilon_{0}$ and $C$ depending on $ n, p,  \lambda, \Lambda, \delta_{0}$ and $ ||\beta||_{C^{2}(\overline{Q}_{1}^{\ast})}$ such that
$$\bigg(  \kint_{B_{r}(x_{0},t_{0}) \cap Q_{1}^{+}}     \psi( (x_{0},t_{0}),(x,t))^{n+1} \ dx dt\bigg)^{\frac{1}{n+1}} \le \epsilon_{0} $$
for any $(x_{0},t_{0}) \in Q_{1}^{+}$ and $r>0$ implies $ u \in W^{2,p}( Q_{\frac{1}{2}}^{+} )$, and we have the estimate
\begin{align} \label{mainest_po} ||u||_{ W^{2,p}( Q_{\frac{1}{2}}^{+} )} \le C (||u||_{ L^{\infty}( Q_{1}^{+} )}+||f||_{ L^{p}( Q_{1}^{+} )}).
\end{align}
\end{theorem}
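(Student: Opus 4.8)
The strategy follows the standard Caffarelli-type $W^{2,p}$ machinery, now carried out up to the flat boundary with the oblique condition on $Q_1^\ast$. The plan is to deduce from the pointwise smallness of the oscillation $\psi_F$ a \emph{comparison estimate}: near any boundary point, the solution $u$ is $L^\infty$-close (after rescaling) to a solution $h$ of the model problem \eqref{paromoeq}, which by the results of \cite{cm2019oblique} enjoys $C^{1,1}$-regularity up to $Q_1^\ast$. Concretely, I would fix a ball $B_r(x_0,t_0)$, let $h$ solve $F(D^2h)-h_t=0$ in the corresponding half-cylinder with $\beta\cdot Dh=0$ on the flat part and $h=u$ on the remaining parabolic boundary, and estimate $\|u-h\|_{L^\infty}$ in terms of $\|f\|_{L^{n+1}}$ and $\bigl(\kint \psi_F^{n+1}\bigr)^{1/(n+1)}$ using the maximum principle for the Pucci classes (here one must be careful that $u-h$ lies in $S^\ast(\,|f|+\psi_F\|D^2h\|\,)$, which is where convexity of $F$ and the $C^{1,1}$ bound on $h$ enter). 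The $C^{1,1}$-estimate for $h$ then lets one slide a paraboloid of \emph{universal} opening under and over $u$ at a fixed fraction of points, i.e. $G_M(u,\cdot)$ has large density — this is exactly the content already abstracted in Lemma \ref{parlem1}, Lemma \ref{palemh} and Corollary \ref{parobdist}, provided $\epsilon_0$ and $\|f\|_{L^{n+1}}$ are small.

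The second block of the argument is the \textbf{normalization and iteration}. First I would reduce to the normalized situation $\|u\|_{L^\infty(Q_1^+)}\le 1$ and $\|f\|_{L^p(Q_1^+)}\le \epsilon$ (hence also $\|f\|_{L^{n+1}}\le\epsilon$ by Hölder, since $p>n+1$) by dividing $u$ by $C(\|u\|_{L^\infty}+\|f\|_{L^p}/\epsilon)$ and using that $F(0,x,t)=0$ so the rescaled function solves an equation of the same structural type. Then, working on the cube $Q=K_r^{n-1}\times(0,r)\times(0,r^2)$ at boundary scale, I would apply Corollary \ref{parobdist} to get the power decay $|A_s(u,\Omega)\cap Q|\le Cs^{-\mu}|Q|$, combine it with the distributional characterization of $L^p$ in Proposition \ref{paralpeq} (with $M$ large and the decay rate $\mu$ pushed above $p$ by iterating the comparison at all dyadic scales via the Calderón–Zygmund covering Lemma \ref{calzypa}, exactly as packaged in Lemma \ref{calzypaap}). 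The key quantitative point is that the maximal function term $\beta_k$ in Corollary \ref{parobdist} decays like $M^{-(n+1)k}$ while the desired exponent is $p$; since $p$ can exceed $n+1$ one must instead feed $M(|f|^{p})$ — i.e. redo Lemma \ref{calzypaap} with $|f|^{p}$ in place of $|f|^{n+1}$ — so that $\beta_k\le CM^{-pk}$ and $\alpha_k\le C(n,p)M^{-pk}$, which upon summing $\sum M^{pk}\mu_{|D^2u|}(\eta M^k)<\infty$ gives $D^2u\in L^p(Q)$ with the right norm bound.

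The third block is \textbf{assembling the local pieces and recovering the lower-order terms.} Applying the boundary density estimate on a finite overlapping family of cubes $Q$ covering $Q_{1/2}^\ast$, together with the interior estimate (Lemma \ref{pardiin}, via the same iteration but with interior paraboloid density) on cubes staying away from $Q_1^\ast$, yields $D^2u\in L^p(Q_{1/2}^+)$ with $\|D^2u\|_{L^p(Q_{1/2}^+)}\le C(\|u\|_{L^\infty(Q_1^+)}+\|f\|_{L^p(Q_1^+)})$. From the equation $u_t=F(D^2u,x,t)-f$ and uniform ellipticity $|F(D^2u,x,t)|=|F(D^2u,x,t)-F(0,x,t)|\le\Lambda\|D^2u\|$, one immediately gets $\|u_t\|_{L^p}\le C(\|D^2u\|_{L^p}+\|f\|_{L^p})$; the bounds on $\|Du\|_{L^p}$ and $\|u\|_{L^p}$ follow from interpolation / Poincaré-type inequalities on $Q_{1/2}^+$ against $\|u\|_{L^\infty}$ and $\|D^2u\|_{L^p}$. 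Summing the four contributions gives \eqref{mainest_po}.

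The \textbf{main obstacle} I expect is the boundary comparison estimate: producing a model-problem solution $h$ on the (possibly slightly shrunken, half-ball) domain that both (i) has the quantitative $C^{1,1}$-bound up to $Q_1^\ast$ uniformly in the scale and in the particular $F$, relying on \cite{cm2019oblique}, and (ii) satisfies the oblique condition with the same $\beta$, so that $u-h$ can legitimately be placed in a Pucci class $S^\ast$ with a controlled right-hand side \emph{including the oblique boundary as part of the domain} — i.e. one needs a maximum principle / ABP-type bound that sees the boundary condition, not just the interior equation. Getting the error term $\|u-h\|_{L^\infty}$ to be genuinely controlled by $\epsilon_0$ (and not merely by a modulus) requires the convexity of $F$ together with a careful use of the interior/boundary $C^{1,1}$ estimate for $h$ to bound $\psi_F((x_0,t_0),(x,t))\,\|D^2h(x,t)\|$ in $L^{n+1}$; this is the one place where all the structural hypotheses on $F$, $\beta$ and the flat geometry are used simultaneously, and it is where the small-BMO threshold $\epsilon_0$ is fixed.
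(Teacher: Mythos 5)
Your overall architecture matches the paper's: compare to the model problem \eqref{paromoeq} using the $C^{1,1}$-regularity of \cite{cm2019oblique}, feed the resulting smallness into the dyadic density estimates (Lemma \ref{parlem1}, Lemma \ref{palemh}, Corollary \ref{parobdist}), iterate via the parabolic Calder\'on--Zygmund covering (Lemma \ref{calzypa}, packaged as Lemma \ref{calzypaap} and Lemma \ref{paob_lem3}), and then convert the decay of $|A_{M^k}|$ into an $L^p$-bound via Proposition \ref{paralpeq}. You also correctly identified the sensitive point: producing the auxiliary solution $h$ with a quantitative $C^{1,1}$-bound that sees the oblique boundary and estimating $u-h$ through an ABP-type bound that incorporates $Q_1^\ast$ -- this is exactly the content of Lemma \ref{parobper} and its boundary-localized refinement Lemma \ref{pl02}.

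There is, however, one genuine gap in your iteration, in the passage where you propose to ``redo Lemma \ref{calzypaap} with $|f|^p$ in place of $|f|^{n+1}$ so that $\beta_k \le CM^{-pk}$ and $\alpha_k \le C(n,p)M^{-pk}$.'' The bound $\beta_k \le CM^{-pk}$ obtained from the weak-$(1,1)$ estimate for $M(|f|^p)$ with $|f|^p \in L^1$ is borderline: $\sum_k M^{pk}\beta_k$ would then be bounded by $\sum_k C$, which diverges. The same objection applies to $\alpha_k \le CM^{-pk}$ as a route to $\sum_k M^{pk}\alpha_k < \infty$. The paper sidesteps this by keeping $M(|f|^{n+1})$ in the stopping-time set $B$ (this is what the ABP estimate naturally measures at each dyadic scale), observing that $\|M(|f|^{n+1})\|_{L^{p/(n+1)}} \le C(n,p)$ by the \emph{strong} $L^{p/(n+1)}$-boundedness in Proposition \ref{mppara}, and then invoking the layer-cake equivalence in Proposition \ref{paralpeq} (applied with exponent $p/(n+1)$ and dyadic base $M^{n+1}$) to conclude that $\sum_k M^{pk}\beta_k < \infty$. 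Coupled with the choice $\epsilon_0 = 1/(4M^p)$, which makes $(2\epsilon_0)^k M^{pk} = 2^{-k}$ summable, this yields $\sum_k M^{pk}\alpha_k < \infty$ rigorously. So the correct move is not to replace the power on $f$, but to upgrade from weak to strong maximal bounds and keep the bookkeeping of Proposition \ref{paralpeq}.

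As for the lower-order terms, your route (read $u_t$ off the equation via $|F(D^2u,x,t)| \le \Lambda\|D^2u\|$, then interpolate for $Du$ and $u$) is valid. The paper instead gets $u_t$ and $D^2u$ simultaneously from the paraboloid sandwich, since the parabolic paraboloid $a + l\cdot x + \frac{M}{2}(|x|^2 - t)$ controls both second spatial derivatives and the time derivative at once, so $A_M$ already encodes a bound on $|u_t|$; this is why the final display in the paper's proof yields $\|u_t\|_{L^p} + \|D^2u\|_{L^p}$ directly. Either route works.
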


To prove Theorem \ref{paob_kthm}, we use $C^{1,1}$-regularity results for solutions of \eqref{paromoeq},
which is indeed, there are notable results for this problem proved in  \cite{cm2019oblique} by Chatzigeorgiou and Milakis.
We refer to a series of lemmas, Lemma \ref{abptob} - \ref{parobc2a}, from \cite{cm2019oblique}
to present their modifications as follows. 

\begin{lemma}  \label{abptob}
Let $ f \in C(\overline{Q}_{1}^{+}) $, $g \in C(\overline{Q}_{1}^{\ast}) $, and $u \in C(\overline{Q}_{1}^{+})$ satisfy
\begin{align} \label{parobabpt}
\left\{ \begin{array}{ll}
u \in S ^{\ast}(\lambda, \Lambda,  f) & \textrm{in $Q_{1}^{+}$,}\\
\beta \cdot Du = g & \textrm{on $Q_{1}^{\ast}$.}\\
\end{array} \right. 
\end{align}
Suppose that there exists $\xi \in Q_{1}^{\ast}$ such that $\beta \cdot \xi \ge \delta_{0} $. Then
$$ ||u||_{L^{\infty}(Q_{1}^{+})} \le ||u||_{L^{\infty}(\partial_{p}  Q_{1}^{+}\backslash Q_{1}^{\ast})} + C(||g||_{L^{\infty}( Q_{1}^{\ast})} +  ||f||_{L^{n+1}( Q_{1}^{+})}) $$
where $C$ depends only on $n,\lambda, \Lambda$ and $\delta_{0}$.
\end{lemma}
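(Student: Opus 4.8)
The plan is to reduce Lemma~\ref{abptob} to the interior parabolic ABP maximum principle, following the scheme used for oblique problems in \cite{cm2019oblique} (and, for the elliptic/Neumann case, in \cite{MR2254613,MR3780142}). Since the class $S^{\ast}(\lambda,\Lambda,f)$ and the condition $\beta\cdot Du=g$ are symmetric under $u\mapsto-u$, $g\mapsto-g$, it suffices to bound $\sup_{Q_1^{+}}u$ from above. Subtracting the constant $\|u\|_{L^{\infty}(\partial_{p}Q_1^{+}\setminus Q_1^{\ast})}$, we may assume $u\le0$ on $\partial_{p}Q_1^{+}\setminus Q_1^{\ast}$. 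Using the obliqueness hypothesis $\beta\cdot\xi\ge\delta_{0}$, pick a linear function $\ell$ of $x_{n}$ with $\sup_{Q_1^{+}}|\ell|\le1$, $\ell\equiv0$ on $Q_1^{\ast}$, and $\beta\cdot D\ell$ of the sign for which $v:=u-\delta_{0}^{-1}\|g\|_{L^{\infty}(Q_1^{\ast})}\,\ell$ satisfies $\beta\cdot Dv\le0$ on $Q_1^{\ast}$ in the viscosity sense. Then $|v-u|\le\delta_{0}^{-1}\|g\|_{L^{\infty}(Q_1^{\ast})}$ on $Q_1^{+}$, $v\in\underline{S}(\lambda,\Lambda,-|f|)$ in $Q_1^{+}$ (adding a linear function changes neither $D^{2}$ nor $\partial_{t}$), and $v\le C\delta_{0}^{-1}\|g\|_{L^{\infty}(Q_1^{\ast})}$ on $\partial_{p}Q_1^{+}\setminus Q_1^{\ast}$. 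It therefore suffices to prove $\sup_{Q_1^{+}}v\le C\big(\|f\|_{L^{n+1}(Q_1^{+})}+\delta_{0}^{-1}\|g\|_{L^{\infty}(Q_1^{\ast})}+\|u\|_{L^{\infty}(\partial_{p}Q_1^{+}\setminus Q_1^{\ast})}\big)$.

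\emph{Flattening $\beta$ and reflecting.} Since $Q_1^{\ast}\subset\{x_{n}=0\}$ and $\beta$ is uniformly oblique, the shear along $\beta$, namely $\Phi(y',y_{n},t)=(y'+y_{n}\gamma(y',t),\,y_{n},\,t)$ with $\gamma=\beta'/\beta_{n}$, is a bi-Lipschitz change of variables fixing $\{x_{n}=0\}$ and $\{x_{n}>0\}$, with constants controlled in terms of $\delta_{0}$, which turns the oblique inequality into the pure Neumann inequality $\partial_{n}(v\circ\Phi)\le0$ on the flattened boundary; in the new coordinates $v\circ\Phi$ lies in a Pucci class $\underline{S}(\lambda',\Lambda',b,\tilde f)$ with quantitatively controlled ellipticity and drift. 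One then extends $v\circ\Phi$ across $\{x_{n}=0\}$ by even reflection in $x_{n}$ to a full parabolic cylinder $\mathcal{C}$, obtaining $W$ with $W\le C(\cdots)$ on $\partial_{p}\mathcal{C}$ which — this is the crucial point — still belongs to $\underline{S}(\lambda',\Lambda',b,\cdot)$ on all of $\mathcal{C}$, the interface $\{x_{n}=0\}$ included. The interior parabolic ABP maximum principle (see \cite{MR1135923}) then yields
\[
\sup_{Q_1^{+}}v\ \le\ \sup_{\mathcal{C}}W\ \le\ \sup_{\partial_{p}\mathcal{C}}W^{+}+C\|\tilde f\|_{L^{n+1}(\mathcal{C})}\ \le\ C\big(\|u\|_{L^{\infty}(\partial_{p}Q_1^{+}\setminus Q_1^{\ast})}+\delta_{0}^{-1}\|g\|_{L^{\infty}(Q_1^{\ast})}+\|f\|_{L^{n+1}(Q_1^{+})}\big);
\]
since $\Phi$ need only be built in a fixed neighborhood of each boundary point, one covers $\overline{Q_1^{\ast}}$ by finitely many such charts and patches the local bounds together with the interior estimate away from $Q_1^{\ast}$. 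Undoing the previous reduction finishes the proof.

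\emph{The main obstacle} is justifying that the even reflection $W$ is still a viscosity subsolution across $\{x_{n}=0\}$; a priori this fails and must be recovered from the homogeneous Neumann condition. At an interface point one tests $W$ from above with a $C^{2}$ function $\varphi$ and simultaneously with its reflection $\tilde\varphi(x',x_{n},t)=\varphi(x',-x_{n},t)$, which also touches from above by evenness: applying the Neumann inequality to both forces $\partial_{n}\big(\min\{\varphi,\tilde\varphi\}\big)=0$ there, after which a standard perturbation makes $\min\{\varphi,\tilde\varphi\}$ an admissible symmetric test function and the interior Pucci inequality passes to $W$. Making this precise in the parabolic setting, and tracking how $\lambda',\Lambda',b$ — and hence the final constant — depend on $\delta_{0}$ through the change of variables, is where the real work lies; the remaining ingredients (scaling, covering, the interior ABP) are routine adaptations of the arguments in \cite{cm2019oblique} and \cite{MR1135923}.
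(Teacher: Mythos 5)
The paper does not prove Lemma~\ref{abptob}: it imports it verbatim from \cite{cm2019oblique} (``We refer to a series of lemmas, Lemma \ref{abptob} -- \ref{parobc2a}, from \cite{cm2019oblique}\dots''). The proof in that reference, like the elliptic versions in \cite{MR2254613,MR3780142}, is a \emph{barrier} argument: one constructs an explicit smooth comparison function $\Psi$ (essentially a paraboloid or exponential built on the geometry of $B_{r,h}^{+}$, which is why the paper carries that notation around) with $\mathcal{M}^{-}(D^{2}\Psi)-\Psi_{t}\le -1$ in $Q_{1}^{+}$, $\beta\cdot D\Psi\le -\delta_{0}$ on $Q_{1}^{\ast}$, and controlled size, and then applies the comparison principle for the oblique problem. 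The point is that $\beta$ only enters pointwise through $\beta\cdot D\Psi$ at touching points, so no regularity of $\beta$ beyond measurability and the obliqueness bound is ever used.

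Your flatten-and-reflect scheme has a genuine gap precisely on this point. The shear $\Phi(y',y_{n},t)=(y'+y_{n}\gamma(y',t),y_{n},t)$ with $\gamma=\beta'/\beta_{n}$ is bi-Lipschitz only if $\gamma$ is Lipschitz in $(y',t)$, and pushing the Pucci class through it requires $D^{2}\Phi$ and $\partial_{t}\Phi$ to be bounded, hence roughly $\beta\in C^{1,1}_{x}\cap C^{0,1}_{t}$. Lemma~\ref{abptob} assumes nothing of the sort: the only hypotheses are $\|\beta\|_{L^{\infty}}\le1$ and the obliqueness $\beta\cdot\mathbf{n}\ge\delta_{0}$. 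Since this lemma is the base of the whole chain (it feeds the global H\"older estimate of Lemma~\ref{gloho}, the approximation Lemma~\ref{parobper}, and the compactness Lemma~\ref{parobw1plem}), proving it under an extra $C^{1,1}$ hypothesis on $\beta$ is not a harmless strengthening.

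Two further problems. First, the final covering step does not work for an ABP inequality: the estimate is non-local, and the parabolic boundary of each local chart lies inside $Q_{1}^{+}$, so the ``boundary term'' you get from each chart is $\sup_{Q_{1}^{+}}u$ itself, not a datum. You would have to make the shear global on $Q_{1}^{\ast}$ (possible, but this only worsens the regularity you need from $\beta$), and even then the image of $Q_{1}^{+}$ is not a cylinder, so you still need a barrier or a further extension argument to apply the interior ABP of \cite{MR1135923}. Second, the linear reduction step has a sign error: with $\ell=x_{n}$ you must \emph{add} $\delta_{0}^{-1}\|g\|_{L^{\infty}}\ell$ to kill $g$ on the subsolution side (testing from above gives $\beta\cdot D\varphi\ge g+ \delta_{0}^{-1}\|g\|_{\infty}\beta_{n}\ge 0$), not subtract it, and the boundary inequality you inherit for $v$ has the opposite sign from what you wrote. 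You are right that the even reflection preserving the sub/supersolution property across the interface is the delicate point of the approach, but that issue is downstream of the $\beta$-regularity obstruction. I would recommend abandoning the reflection scheme here and reproducing the barrier-function proof.
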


\begin{lemma} \label{parobhol}
Let $ f \in C(\overline{Q}_{1}^{+}) $, $g \in C(\overline{Q}_{1}^{\ast}) $, and $u \in C(\overline{Q}_{1}^{+})$ satisfy 
\eqref{parobabpt}.
Then  $u \in  C^{0,\alpha}(Q_{\frac{1}{2}}^{+})$ and
$$ ||u||_{ C^{0,\alpha}(\overline{Q}_{\frac{1}{2}}^{+})}   \le C (||u||_{L^{\infty}(Q_{1}^{+})}+||f||_{L^{n+1}(Q_{1}^{+})}+||g||_{L^{\infty}(Q_{1}^{\ast})}) $$
where $0 < \alpha < 1 $ and $C>1$ depend only on $ n, \lambda, \Lambda$ and $\delta_{0}$.
\end{lemma}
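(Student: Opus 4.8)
The estimate is a boundary Krylov--Safonov (oscillation decay) statement, so I would split the modulus of continuity into an interior part and a flat‑boundary part. For $(x_{0},t_{0})\in Q_{1/2}^{+}$ with $\rho:=\dist(x_{0},\partial\mathbb{R}^{n}_{+})>0$, the classical interior parabolic Krylov--Safonov theory for the class $S^{\ast}(\lambda,\Lambda,f)$ (see \cite{MR1135923}) already gives $\osc_{Q_{\rho/2}(x_{0},t_{0})}u\le C\rho^{\alpha}\big(\|u\|_{L^{\infty}(Q_{1}^{+})}+\|f\|_{L^{n+1}(Q_{1}^{+})}\big)$; hence, by the standard patching of interior and boundary oscillation decays on dyadic cylinders, it suffices to prove a \emph{diminish of oscillation at the flat boundary}: there are universal $\theta\in(0,1)$, $\gamma\in(0,1)$ and $\epsilon_{1}>0$ such that if $u\in S^{\ast}(\lambda,\Lambda,f)$ in $Q_{1}^{+}$, $\beta\cdot Du=g$ on $Q_{1}^{\ast}$ with $\beta\cdot\mathbf n\ge\delta_{0}$, $\osc_{Q_{1}^{+}}u\le1$, $\|f\|_{L^{n+1}(Q_{1}^{+})}\le\epsilon_{1}$ and $\|g\|_{L^{\infty}(Q_{1}^{\ast})}\le\epsilon_{1}$, then $\osc_{Q_{\theta}^{+}}u\le 1-\gamma$. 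Iterating this under the parabolic rescaling $u\mapsto u(\theta^{k}\cdot,\theta^{2k}\cdot)$ — under which the $L^{n+1}$‑norm of $f$ and the $L^{\infty}$‑norm of $\beta\cdot Du$ only shrink — yields $\osc_{Q_{\theta^{k}}^{+}}u\le(1-\gamma)^{k}$, and therefore the asserted $C^{0,\alpha}$ bound on $\overline Q_{1/2}^{+}$ after the usual normalization by $\|u\|_{L^{\infty}(Q_{1}^{+})}+\|f\|_{L^{n+1}(Q_{1}^{+})}+\|g\|_{L^{\infty}(Q_{1}^{\ast})}$, with all constants depending only on $n,\lambda,\Lambda,\delta_{0}$.

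\textbf{Engine: a boundary weak Harnack.} The diminish of oscillation follows from a weak Harnack inequality up to the flat face: if $v\ge0$ in $Q_{1}^{+}$, $v\in\overline S(\lambda,\Lambda,|f|)$ and $\beta\cdot Dv=h$ on $Q_{1}^{\ast}$ with $\beta\cdot\mathbf n\ge\delta_{0}$, then for a universal $p_{0}>0$ and $C$, with $Q^{-}:=B_{1/2}^{+}\times(-3/4,-1/2)$ and $Q^{+}:=B_{1/2}^{+}\times(-1/4,0)$, one has $\big(\kint_{Q^{-}}v^{p_{0}}\big)^{1/p_{0}}\le C\big(\inf_{Q^{+}}v+\|f\|_{L^{n+1}(Q_{1}^{+})}+\|h\|_{L^{\infty}(Q_{1}^{\ast})}\big)$. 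Granting this, set $M_{1}=\sup_{Q_{1}^{+}}u$, $m_{1}=\inf_{Q_{1}^{+}}u$. Using $S^{\ast}=\overline S(|f|)\cap\underline S(-|f|)$ together with $\mathcal{M}^{-}(-X)=-\mathcal{M}^{+}(X)$ and the fact that $\mathcal{M}^{\pm}$ and $\partial_{t}$ annihilate constants, both $M_{1}-u$ and $u-m_{1}$ are nonnegative, lie in $\overline S(\lambda,\Lambda,|f|)$, and satisfy the oblique condition with data $-g$ and $g$ respectively. Applying the boundary weak Harnack to each, and using the elementary measure dichotomy that on $Q^{-}$ at least one of $M_{1}-u$, $u-m_{1}$ is $\ge\tfrac12\osc_{Q_{1}^{+}}u$ on half of $Q^{-}$, we get $\osc_{Q^{+}}u\le(1-c)\osc_{Q_{1}^{+}}u+C\big(\|f\|_{L^{n+1}}+\|g\|_{L^{\infty}}\big)$, which is the claim once $\epsilon_{1}$ is small (here $Q^{+}$ plays the role of $Q_{\theta}^{+}$ after a harmless time shift).

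\textbf{Proof of the boundary weak Harnack.} I would run the Krylov--Safonov $L^{\varepsilon}$ scheme adapted to the half‑cylinder. \emph{Step (i) (single measure step).} If $v(\bar x,\bar t)\le1$ at a point of a good lower sub‑cylinder, then $|\{v\le\bar C\}\cap Q^{\ast}|\ge\mu_{0}|Q^{\ast}|$ for universal $\bar C,\mu_{0}$; this is obtained by comparing $v$ from below, on an interior sub‑cylinder, with an explicit parabolic barrier $\Phi_{0}$ of the usual Krylov--Safonov type (so that $L^{-}\Phi_{0}\le-|f|$ off a small core, $\Phi_{0}$ is of the required size on the lower good set and on the relevant upper region, and $|\Phi_{0}|\lesssim1$), modified near $\{x_{n}=0\}$ by adding a term $\kappa x_{n}$ with $\kappa=\kappa(\delta_{0},\|h\|_{L^{\infty}})$ large: since $L^{-}(\kappa x_{n})=0$ this costs nothing in the equation, while $\beta\cdot D\Phi\ge\beta\cdot D\Phi_{0}+\kappa\,\beta\cdot\mathbf n\ge\beta\cdot D\Phi_{0}+\kappa\delta_{0}>\|h\|_{L^{\infty}}\ge h$ on $Q^{\ast}$, so $v-\Phi$ cannot attain an interior minimum along $Q^{\ast}$ and the comparison proceeds as if the flat boundary were absent, with Lemma~\ref{abptob} supplying the ABP ingredient (in which the $\|f\|_{L^{n+1}}$ and $\|h\|_{L^{\infty}}$ terms already appear). \emph{Step (ii) (iteration).} Rescaling each dyadic sub‑cylinder, Step (i) yields exactly the propagation hypothesis ``$|K\cap A|>\delta|K|\Rightarrow\overline K^{m}\subset B$'' of the parabolic Calder\'on--Zygmund lemma, Lemma~\ref{calzypa}, applied with $A=\{v>\bar C^{k+1}\}$, $B=\{v>\bar C^{k}\}$ enlarged by the set where the parabolic maximal function of $|f|^{n+1}$ is large (as in Lemma~\ref{calzypaap}); this gives $|\{v>\bar C^{k}\}\cap Q^{-}|\le C(1-\mu_{0})^{k}$ under the normalization $\inf_{Q^{+}}v\le1$, $\|f\|_{L^{n+1}}+\|h\|_{L^{\infty}}\le1$. \emph{Step (iii) (summation).} Proposition~\ref{paralpeq} then converts this distribution‑function decay into $\big(\kint_{Q^{-}}v^{p_{0}}\big)^{1/p_{0}}\le C$ for a suitable $p_{0}>0$, which is the stated inequality after undoing the normalization. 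The main obstacle is Step (i)–(ii): setting up the parabolic $L^{\varepsilon}$ argument \emph{up to} the flat face — the forward‑in‑time stacking of cylinders, the compatibility of the barrier with both the oblique condition (where $\delta_{0}$ enters the constants, via the $\kappa x_{n}$ correction) and the $L^{n+1}$ right‑hand side, and the universal choice of core size and spatial decay exponent in $\Phi_{0}$; this is routine in spirit and carried out in \cite{cm2019oblique}, the present modification being only the explicit tracking of $\|f\|_{L^{n+1}}$ and $\|g\|_{L^{\infty}}$ through each step.
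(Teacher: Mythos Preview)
The paper does not prove this lemma; it is quoted (with a slight reformulation) from \cite{cm2019oblique} as one of the background results Lemmas~\ref{abptob}--\ref{parobc2a}. Your outline is precisely the route taken there: interior parabolic Krylov--Safonov plus a boundary oscillation decay driven by a weak Harnack inequality up to the flat face, the latter obtained via the $L^{\varepsilon}$ scheme with the stacked Calder\'on--Zygmund covering (Lemma~\ref{calzypa}). The normalization and iteration you describe are correct, and the observation that both $\|f\|_{L^{n+1}}$ and $\|g\|_{L^{\infty}}$ contract under the parabolic rescaling $u\mapsto u(\theta\cdot,\theta^{2}\cdot)$ (with exponents $n/(n+1)$ and $1$ respectively) is exactly what allows the error terms to be absorbed at each step.

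One point worth tightening is Step~(i). The device of adding $\kappa x_{n}$ to the interior barrier $\Phi_{0}$ so as to force $\beta\cdot D\Phi>h$ on $\{x_{n}=0\}$ is morally right, and the obliqueness $\beta\cdot\mathbf n\ge\delta_{0}$ is precisely what makes $\kappa=\kappa(\delta_{0},\|h\|_{L^{\infty}})$ universal. However, this shifts $\Phi$ by as much as $\kappa r$ on the lateral and bottom pieces of the parabolic boundary, where you only know $v\ge0$; so the naive comparison $\Phi\le v$ fails there. The fix is either to feed the modified barrier into the ABP estimate of Lemma~\ref{abptob} and absorb the resulting universal boundary contribution into $\bar C$ (which your parenthetical remark already anticipates), or, closer to what \cite{cm2019oblique} actually does, to build a half-cylinder barrier from scratch so that the lateral/bottom boundary data and the oblique condition are satisfied simultaneously. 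Either way the constants depend only on $n,\lambda,\Lambda,\delta_{0}$, and your sketch captures the essential mechanism.
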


\begin{lemma} \label{parobc1a}
Let $u \in C(Q_{1}^{+} \cup Q_{1}^{\ast})$ be a viscosity solution of 
\begin{align} \label{paromoeq2}
\left\{ \begin{array}{ll}
F(D^{2}u) - u_{t} = f & \textrm{in $Q_{1}^{+},$}\\
\beta \cdot Du = g & \textrm{on $Q_{1}^{\ast}$}\\
\end{array} \right. 
\end{align}
and $0 < \alpha < \tilde{\alpha}$,
where $0 < \tilde{\alpha} < 1$ is a constant depending only on $ n, \lambda, \Lambda$ and $\delta_{0}$.
Suppose that $f \in L^{q}(Q_{1}^{+})$ with $q> \frac{(n+1)(n+2)}{2}$, $g, \beta \in C^{0,\alpha}(\overline{Q}_{1}^{\ast})$.
Then $u \in C^{1, \alpha}(\overline{Q}_{1/4}^{+})$ and
$$ ||u||_{C^{1,\alpha}(\overline{Q}_{1/4}^{+})}   \le C (||u||_{L^{\infty}(Q_{1}^{+})}+||f||_{ L^{q}(\overline{Q}_{1}^{+})}+||g||_{C^{0, \alpha}(\overline{Q}_{\frac{1}{2}}^{\ast})} ) $$
where $C$ depends only on $ n, \lambda, \Lambda, \delta_{0} , \tilde{\alpha}$ and $||\beta||_{C^{0,\alpha}(\overline{Q}_{\frac{1}{2}}^{\ast})}$.
\end{lemma}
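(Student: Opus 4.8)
The plan is to adapt the perturbative (compactness) scheme of Caffarelli, in the spirit of \cite{cm2019oblique}: I would upgrade the H\"older estimate of Lemma \ref{parobhol} to a \emph{pointwise} $C^{1,\alpha}$-estimate at every point of $\overline{Q}_{1/4}^{+}$ and then assemble these into the global norm bound. Since interior $C^{1,\alpha}$-regularity for convex fully nonlinear parabolic equations with $f\in L^{q}$, $q>n+1$, is classical (see, e.g., \cite{MR1135923}), the essential new point is the estimate at a boundary point $z_{0}\in Q_{1/4}^{\ast}$, which I now outline. After translating $z_{0}$ to the origin and noting that $u\in S^{\ast}(\lambda,\Lambda,|f|)$ in $Q_{1}^{+}$ by uniform ellipticity, convexity and $F(0)=0$ (i.e.\ \eqref{paob_sc} with $b=c=0$), I would first subtract the affine function $x\mapsto u(0,0)+\big(g(0,0)/\beta_{n}(0,0)\big)x_{n}$ — legitimate since $\beta\cdot\mathbf{n}\ge\delta_{0}$ — so that afterwards $u(0,0)=0$ and $g(0,0)=0$, and then divide everything by $C\big(\|u\|_{L^{\infty}(Q_{1}^{+})}+\|f\|_{L^{q}(Q_{1}^{+})}+\|g\|_{C^{0,\alpha}(\overline{Q}_{1/2}^{\ast})}\big)$; this normalization is what produces the constant in the claimed estimate.

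The core is a one-step improvement lemma: there exist universal $\rho,\delta\in(0,1)$ and $C_{0}>0$ such that if $u\in S^{\ast}(\lambda,\Lambda,|f|)$ in $Q_{1}^{+}$ with $\beta\cdot Du=g$ on $Q_{1}^{\ast}$, $\|u\|_{L^{\infty}(Q_{1}^{+})}\le1$, $\|f\|_{L^{q}(Q_{1}^{+})}\le\delta$, $g(0,0)=0$, $\|g\|_{L^{\infty}(Q_{1}^{\ast})}\le\delta$ and $\|\beta-\beta_{0}\|_{L^{\infty}(Q_{1}^{\ast})}\le\delta$ for a constant vector $\beta_{0}$ with $\beta_{0}\cdot\mathbf{n}\ge\delta_{0}$, then there is an affine $\ell(x)=a+p\cdot x$ with $\beta_{0}\cdot p=0$, $|a|+|p|\le C_{0}$ and $\sup_{Q_{\rho}^{+}}|u-\ell|\le\rho^{1+\alpha}$. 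I would prove this by contradiction: for a violating sequence $(u_{j},F_{j},\beta_{j},f_{j},g_{j})$, Lemma \ref{parobhol} gives a uniform interior H\"older bound on $u_{j}$, so along a subsequence $u_{j}\to u_{\infty}$ uniformly on $\overline{Q}_{3/4}^{+}$; uniform ellipticity makes the $F_{j}$ equi-Lipschitz in $X$ and $F_{j}(0)=0$ makes them equibounded, so $F_{j}\to F_{\infty}$ locally uniformly; moreover $\beta_{j}\to\beta_{0,\infty}$ uniformly, $f_{j}\to0$ in $L^{n+1}$, $g_{j}\to0$ uniformly. By stability of viscosity solutions of the oblique problem under these convergences, $u_{\infty}$ solves the homogeneous model problem $F_{\infty}(D^{2}u_{\infty})-\partial_{t}u_{\infty}=0$ in $Q_{3/4}^{+}$, $\beta_{0,\infty}\cdot Du_{\infty}=0$ on $Q_{3/4}^{\ast}$; by the boundary $C^{2,\alpha}$-regularity for this model from \cite{cm2019oblique} (Lemma \ref{abptob}--\ref{parobc2a}), $u_{\infty}\in C^{1,1}$ up to $Q_{3/4}^{\ast}$, so its first-order Taylor polynomial $\ell_{\infty}$ at the origin satisfies $|\ell_{\infty}(0)|+|D\ell_{\infty}|\le C'$ (universal), $\beta_{0,\infty}\cdot D\ell_{\infty}=0$ classically, and $\sup_{Q_{\rho}^{+}}|u_{\infty}-\ell_{\infty}|\le C'\rho^{2}$. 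Fixing $\rho$ with $C'\rho^{2}\le\tfrac14\rho^{1+\alpha}$ (possible since $\alpha<1$), and correcting the normal coefficient of $\ell_{\infty}$ by an $O(|\beta_{j}-\beta_{0,\infty}|)$ amount to turn $\beta_{0,\infty}\cdot D\ell_{\infty}=0$ into $\beta_{j}\cdot D\ell=0$, one obtains for large $j$ an $\ell$ with all the required properties, a contradiction.

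To iterate, I would first fix a small radius $r_{1}$ (depending only on the constants allowed in the statement) and rescale parabolically from $Q_{r_{1}}^{+}(z_{0})$ to $Q_{1}^{+}$: uniform continuity of $\beta$ makes $\osc_{Q_{1}^{\ast}}\beta\le\delta$, while the parabolic scaling multiplies $\|f\|_{L^{q}}$ by a positive power $r_{1}^{2-(n+2)/q}$ (using $q>\tfrac{n+2}{2}$) and $[g]_{C^{0,\alpha}}$ by $r_{1}^{1+\alpha}$, so after normalization all data are $\le\delta$; here $g(0,0)=0$ is preserved. Then I apply the one-step lemma on the cylinders $Q_{\rho^{k}}^{+}$, at each stage subtracting an affine $\ell_{k}$ with $\beta_{0}\cdot p_{k}=0$ and passing to $v_{k}(x,t)=\rho^{-k(1+\alpha)}(u-\ell_{k})(\rho^{k}x,\rho^{2k}t)$. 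The hypotheses propagate: $\|v_{k}\|_{L^{\infty}(Q_{1}^{+})}\le1$ by construction; $\|f_{k}\|_{L^{q}(Q_{1}^{+})}$ is multiplied by $\rho^{(1-\alpha)-(n+2)/q}\le1$ — this is exactly where $q>\tfrac{(n+1)(n+2)}{2}$, equivalently $\alpha<\tilde\alpha\le\tfrac{n-1}{n+1}$, is used; $\osc_{Q_{1}^{\ast}}\beta_{k}$ only decreases; and, because $g(0,0)=0$ and $\beta_{0}\cdot p_{k}=0$, $\|g_{k}\|_{L^{\infty}(Q_{1}^{\ast})}\le[g]_{C^{0,\alpha}}+[\beta]_{C^{0,\alpha}}|p_{k}|$, which stays $\le\delta$ once $[g]_{C^{0,\alpha}},[\beta]_{C^{0,\alpha}}$ have been made small by the preliminary rescaling and one notes $|p_{k}|\le|p_{0}|+C_{0}\sum_{j}\rho^{j\alpha}$ is bounded. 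Thus $\sup_{Q_{\rho^{k}}^{+}}|u-\ell_{k}|\le\rho^{k(1+\alpha)}$ with $|a_{k+1}-a_{k}|+\rho^{k}|p_{k+1}-p_{k}|\le C_{0}\rho^{k(1+\alpha)}$; the $\ell_{k}$ converge to an affine $\ell_{\infty}$ with $\beta(z_{0})\cdot D\ell_{\infty}=g(z_{0})$, $|\ell_{\infty}(0)|+|D\ell_{\infty}|\le C$ and $\sup_{Q_{r}^{+}(z_{0})}|u-\ell_{\infty}|\le Cr^{1+\alpha}$ for $r\le\rho$, which after undoing the normalizations is the desired pointwise $C^{1,\alpha}$-estimate at $z_{0}$. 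Carrying this out at every $z_{0}\in Q_{1/4}^{\ast}$ and combining with the interior $C^{1,\alpha}$-estimate gives uniform pointwise $C^{1,\alpha}$-control throughout $\overline{Q}_{1/4}^{+}$, which assembles into $u\in C^{1,\alpha}(\overline{Q}_{1/4}^{+})$ with the stated bound by the standard patching lemma.

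The main obstacle is precisely the non-constant obliqueness vector $\beta$ together with the inhomogeneous, merely H\"older, boundary datum $g$: one must ensure the rescaled oblique data remain uniformly small through the entire geometric iteration. I would handle this by (i) the preliminary fixed-scale rescaling, which exploits the uniform modulus of continuity of $\beta$ and the favourable scaling of $f$ and $g$, and (ii) always subtracting affine functions that satisfy the oblique condition \emph{exactly at the reference point}, so that the only surviving boundary data come from the oscillations of $g$ and $\beta-\beta_{0}$, both $\le\delta$. A secondary technical point is the stability of viscosity solutions of the oblique problem under the convergences used in the compactness step; this is standard and already underlies the lemmas quoted from \cite{cm2019oblique}.
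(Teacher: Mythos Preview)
The paper does not supply its own proof of this lemma; it is quoted directly from \cite{cm2019oblique} (see the sentence immediately preceding Lemma~\ref{abptob}). Your compactness-plus-iteration sketch is precisely the method by which such boundary $C^{1,\alpha}$ estimates are established in that reference, so in effect you have correctly reconstructed the cited argument.
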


\begin{lemma}  \label{parobc2a}
Let $F$ be convex, $u \in C(Q_{1}^{+} \cup Q_{1}^{\ast})$ be a viscosity solution of 
\eqref{paromoeq2}
 and $0 < \alpha < \tilde{\alpha}$,
where $0 < \alpha < 1$ is a constant depending only on $ n, \lambda, \Lambda$ and $\delta_{0}$.
Suppose that $\beta, g \in C^{1, \tilde{\alpha}}(\overline{Q}_{1}^{\ast})$ and $ f \in  C^{0,\tilde{\alpha}}(\overline{Q}_{1}^{+})$.
Then $u \in C^{2, \alpha}(\overline{Q}_{1/4}^{+})$ and
$$ ||u||_{C^{2,\alpha}(\overline{Q}_{1/4}^{+})}   \le C (||u||_{L^{\infty}(Q_{1}^{+})}+||f||_{ C^{0,\tilde{\alpha}}(\overline{Q}_{1}^{+})}+||g||_{C^{1, \tilde{\alpha}}(\overline{Q}_{\frac{1}{2}}^{\ast})} ) $$
where $C$ depends only on $ n, \lambda, \Lambda, \delta_{0} , \tilde{\alpha}$ and $||\beta||_{C^{1,\tilde{\alpha}}(\overline{Q}_{\frac{1}{2}}^{\ast})}$.
\end{lemma}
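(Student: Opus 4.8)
The plan is to run the standard Caffarelli-type $W^{2,p}$ argument in the boundary setting, combining the density estimate for the `bad sector' from Corollary \ref{parobdist} with the parabolic Calder\'{o}n--Zygmund decomposition of Lemma \ref{calzypa} and the $L^p$-equivalence of Proposition \ref{paralpeq}. The key point is that once we know $D^2u$ (and $u_t$) is controlled by the maximal function of $A_s(u,\cdot)$, and that $|A_s|$ decays like $s^{-\mu}$ at every scale by Corollary \ref{parobdist}, we can iterate to get $D^2u \in L^p$ for every $p<\infty$; the integrability of $f$ in $L^p$ with $p>n+1$ is exactly what is needed to feed the comparison estimate. The role of the $C^{1,1}$-regularity of the model problem \eqref{paromoeq} (a consequence of Lemma \ref{parobc2a}) is to provide, at each dyadic scale, an approximating solution $h$ of a constant-coefficient oblique problem with $\|D^2 h\|_{L^\infty}$ and $\|h_t\|_{L^\infty}$ bounded, so that $w=u-h$ is a small perturbation lying in the right Pucci class. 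I expect the main obstacle to be the \emph{localization near the lateral boundary}: one must carefully track the geometry of the sets $Q_r^+$, $Q_r^\ast$, the stacked cubes $\overline{K}^m$, and the oblique vector field $\beta$ under the scaling/covering so that the hypotheses of Lemma \ref{palemh} and Corollary \ref{parobdist} are genuinely satisfied at each cube touching $Q_1^\ast$.

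\textbf{Step 1 (Normalization and comparison).} By dividing $u$ and $f$ by $\|u\|_{L^\infty(Q_1^+)} + \varepsilon_0^{-1}\|f\|_{L^p(Q_1^+)}$ and using the uniform ellipticity together with $F(0,x,t)=0$, I reduce to the case $\|u\|_{L^\infty(Q_1^+)}\le 1$ and small $\|f\|_{L^p}$. For a ball $B_\rho(x_0,t_0)$ with centre on $Q_1^+$, let $h$ solve the frozen-coefficient model problem $F(D^2h,x_0,t_0)-h_t=0$ in the rescaled half-cylinder with the same oblique condition $\beta(x_0,t_0)\cdot Dh=0$ and $h=u$ on the remaining parabolic boundary. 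Lemma \ref{abptob}, Lemma \ref{parobhol}, Lemma \ref{parobc1a} and Lemma \ref{parobc2a} (applied to $F(\cdot,x_0,t_0)$, which is convex and uniformly elliptic) give $\|D^2 h\|_{L^\infty}+\|h_t\|_{L^\infty}\le C$ on a smaller cylinder, together with the needed $C^2$ modulus to compare with paraboloids.

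\textbf{Step 2 (From the small-BMO hypothesis to $w=u-h$ in a good Pucci class).} The difference $w=u-h$ satisfies, in the viscosity sense on $Q_\rho^+(x_0,t_0)$,
\begin{align*}
w\in S^\ast\big(\lambda,\Lambda,\,|f|+\psi_F((x_0,t_0),(\cdot,\cdot))\,\|D^2 h\|_{L^\infty}\big),\qquad \beta\cdot Dw = (\beta(x_0,t_0)-\beta)\cdot Dh\ \textrm{ on } Q_\rho^\ast.
\end{align*}
Because of the small-BMO hypothesis $\big(\kint \psi_F^{n+1}\big)^{1/(n+1)}\le\varepsilon_0$, the bound $\|D^2h\|_{L^\infty}\le C$, the $C^2$-regularity of $\beta$, and the $C^{1,\alpha}$-bound for $h$ from Lemma \ref{parobc1a}, the right-hand data of $w$ has small $L^{n+1}$-average and small boundary term; after absorbing these via Lemma \ref{abptob} and an $\varepsilon_0$-smallness choice, $w/C$ falls into the class to which Corollary \ref{parobdist} applies.

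\textbf{Step 3 (Density decay of the bad set and the power-decay iteration).} Since $h$ is $C^{1,1}$ up to the boundary, $\underline{G}_{M}(h,\cdot)$ is the whole relevant cylinder for some universal $M$, so by the triangle inequality for paraboloids $A_s(u,\cdot)\subset A_{s/2}(w,\cdot)$ (up to constants). Corollary \ref{parobdist} then yields, at each dyadic scale, the density bound for $A_s(u,\cdot)$ on cubes touching $Q_1^\ast$, and Lemma \ref{pardiin}--\ref{pardibd} and the interior Caffarelli theory handle interior cubes. Feeding this into Lemma \ref{calzypa} with the pair $(A_{M^{k+1}},\,A_{M^{k}}\cup\{M(|f|^{n+1})\ge(c_0M^k)^{n+1}\})$ — exactly as in Lemma \ref{calzypaap} and Corollary \ref{parobdist} — produces $|\{A_s(u,Q_{1/2}^+)>0\ \textrm{at level } s\}|\le C s^{-\mu}$ with $\mu$ universal.

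\textbf{Step 4 (Summation and conclusion).} Let $\Theta(x,t) = \inf\{M : (x,t)\in G_M(u,Q_{1/2}^+)\}$ be the maximal-opening function. By the previous step $\mu_\Theta(s)\le C s^{-\mu}$ and moreover, refining the argument, $\mu_\Theta(s)\le C\big(\|f\|_{L^{n+1}}+1\big)^\mu s^{-\mu}$; improving $\mu$ to any power via the extra $f\in L^p$, $p>n+1$, is done by the standard bootstrap where the relevant constant in Lemma \ref{calzypaap} is driven small. Then Proposition \ref{paralpeq} with $M$ in place of the dyadic ratio gives $\Theta\in L^p(Q_{1/2}^+)$ with $\|\Theta\|_{L^p}\le C(\|u\|_{L^\infty(Q_1^+)}+\|f\|_{L^p(Q_1^+)})$. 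Pointwise, $(x,t)\in G_M(u,\cdot)$ forces $u$ to be twice differentiable in the parabolic sense there with $|D^2u|+|u_t|\le C\Theta$ a.e.\ (this is the parabolic analogue of the elliptic fact that bounded opening implies pointwise second differentiability, used in \cite{MR1135923}); together with the already-known $W^{1,p}$ bound (via Lemma \ref{parobc1a} and interpolation) this yields the full $W^{2,p}$ estimate \eqref{mainest_po}. The interior estimate coming from the standard parabolic Caffarelli theory patches with the boundary estimate through a finite covering of $Q_{1/2}^+$ by cylinders centred on $Q_1^+$, some interior and some touching $Q_1^\ast$, completing the proof. The step I expect to be most delicate is Step 3: verifying that every dyadic cube $K$ meeting the bad set has its stacked predecessor $\overline{K}^m$ lying in the half-cylinder with the oblique face correctly positioned, so that Lemma \ref{palemh} can be invoked with the hypothesis point lying in $\{x_n\ge 0\}$ — this is where the boundary geometry genuinely differs from the interior and the Dirichlet cases.
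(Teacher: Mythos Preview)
Your proposal is a proof of the wrong statement. Lemma \ref{parobc2a} is the $C^{2,\alpha}$ Schauder-type boundary estimate for the \emph{constant-coefficient} model problem \eqref{paromoeq2}; in the paper it is not proved at all but is quoted from \cite{cm2019oblique} (see the sentence just before Lemma \ref{abptob}: ``We refer to a series of lemmas, Lemma \ref{abptob} -- \ref{parobc2a}, from \cite{cm2019oblique}''). What you have written is, almost verbatim, a sketch of the proof of Theorem \ref{paob_kthm}, the $W^{2,p}$ estimate for the variable-coefficient problem \eqref{paob_eq}. In particular, in your Step~1 you explicitly invoke Lemma \ref{parobc2a} to get $\|D^2h\|_{L^\infty}+\|h_t\|_{L^\infty}\le C$ for the comparison function $h$; that is circular if Lemma \ref{parobc2a} is what you are trying to establish.

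Even setting the circularity aside, the Caffarelli $W^{2,p}$ machinery you outline cannot yield a $C^{2,\alpha}$ conclusion. Controlling the distribution of the opening function $\Theta$ and showing $\Theta\in L^p$ for every finite $p$ gives, via Sobolev/Morrey embedding, at best $u\in C^{1,\alpha}$ up to the boundary, not $C^{2,\alpha}$. The genuine $C^{2,\alpha}$ estimate for \eqref{paromoeq2} requires a different mechanism: a pointwise iteration in which at each scale one approximates $u$ by a second-order polynomial (rather than a paraboloid of some opening), using convexity of $F$ to get interior $C^{2,\alpha}$ for the frozen problem and then propagating second-order Taylor expansions to the flat boundary via the oblique condition; this is what is carried out in \cite{cm2019oblique}. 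None of Lemma \ref{palemh}, Corollary \ref{parobdist}, Lemma \ref{calzypaap}, or Proposition \ref{paralpeq} is relevant to that argument.
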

 
\begin{remark} Lemma \ref{abptob} and \ref{parobhol} still hold if $ S ^{\ast}(\lambda, \Lambda,  f) $ is replaced with $S ^{\ast}(\lambda, \Lambda, b,  f) $, since $b$ only influences the dependency of constant $C$.
\end{remark}

Next we state and prove the following global H\"{o}lder estimate for model problems which will be used later in Lemma \ref{parobper}.
\begin{lemma} \label{gloho}Let $u \in C(\overline{V}_{1,h_{0}}^{+}) $ be a viscosity solution of
\begin{align} \label{modelex}
\left\{ \begin{array}{ll}
F(D^{2}u) - u_{t}=0 & \textrm{in $V_{1,h_{0}}^{+}$,}\\
\beta \cdot Du =g & \textrm{on $Q_{1}^{\ast}$,}\\
u=\varphi & \textrm{on $\partial_{p}V_{1,h_{0}}^{+} \backslash Q_{1}^{\ast}$,}\\
\end{array} \right. 
\end{align}
where $\beta \in C^{2}(\overline{Q}_{1}^{\ast}), g \in  L^{\infty}(Q_{1}^{\ast}), \varphi \in C^{0,\alpha}(\partial_{p}V_{1,h_{0}}^{+} \backslash Q_{1}^{\ast})$ for some $0< \alpha < 1$ and $h_{0}>0$ is sufficiently small with 
\begin{align} \label{parconcap} \beta(x,t) \cdot  \mathbf{n}(y) <0 \ \textrm{for any } (x,t)\in Q_{1}^{\ast} \textrm{ and }
y \in \partial B_{1,h_{0}}^{+} \backslash T_{1}.
\end{align}
Then $u \in C^{0,\frac{\alpha}{2}}(\overline{V}_{1,h_{0}}^{+}) $ and
$$ ||u||_{C^{0,\frac{\alpha}{2}}(\overline{V}_{1,h_{0}}^{+})} \le C(||\varphi||_{C^{0,\alpha}(\partial_{p}V_{1,h_{0}}^{+} \backslash Q_{1}^{\ast})}+ ||g||_{L^{\infty}(Q_{1}^{\ast})}), $$
where $C$ depends only on $n, \lambda, \Lambda$ and $\delta_{0}$.
\end{lemma}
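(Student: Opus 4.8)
The plan is to establish the $C^{0,\alpha/2}$ estimate by combining the boundary Hölder estimate from Lemma \ref{parobhol} (valid near the flat part $Q_1^\ast$ where the oblique condition lives) with the interior and Dirichlet-boundary Hölder estimates (valid along the curved portion $\partial B_{1,h_0}^+ \setminus T_1$, where the data is simply $\varphi$). The geometric assumption \eqref{parconcap} is exactly what decouples these two regimes: near the curved boundary the oblique vector points strictly inward relative to the normal of the \emph{curved} part, so the oblique condition exerts no influence there and one can use the prescribed Dirichlet data $\varphi$. First I would invoke Lemma \ref{abptob} (with $f=0$) to get the a priori bound $\|u\|_{L^\infty(V_{1,h_0}^+)} \le \|\varphi\|_{L^\infty} + C\|g\|_{L^\infty(Q_1^\ast)}$, so that all subsequent estimates can be phrased in terms of $\|\varphi\|_{C^{0,\alpha}}$ and $\|g\|_{L^\infty}$ alone.

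Next I would localize. For a point $(z_0,s_0)$ in the interior of $V_{1,h_0}^+$ at distance $\rho$ from the parabolic boundary, the interior Hölder estimate for $u \in S^\ast(\lambda,\Lambda,0)$ (the interior analogue underlying Lemma \ref{parobhol}, with $f=0$) gives $[u]_{C^{0,\alpha_0}(Q_{\rho/2}(z_0,s_0))} \le C\rho^{-\alpha_0}\|u\|_{L^\infty}$. For a point on the flat piece $Q_1^\ast$ one rescales to a unit cube and applies Lemma \ref{parobhol} directly, obtaining a uniform $C^{0,\alpha}$ modulus there. For a point on the curved piece $\partial_p V_{1,h_0}^+ \setminus Q_1^\ast$, one uses that $u = \varphi$ there together with a barrier argument: since $\varphi \in C^{0,\alpha}$ and $u \in S^\ast(\lambda,\Lambda,0)$, comparison with translates/dilates of the standard parabolic barrier $|x-x_1|^2 - c(t-t_1) + |t-t_1|$ controls the oscillation of $u$ near that boundary point at the rate $\mathrm{dist}^{\alpha}$. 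Stitching these three local moduli via an interpolation/iteration argument (the usual "growth of oscillation in dyadic shrinking cylinders" lemma, cf.\ the proof scheme in \cite{cm2019oblique}) yields a global $C^{0,\alpha/2}$ bound on $\overline{V}_{1,h_0}^+$; the exponent is halved because the worst estimate — the one governed by the curved boundary together with the oblique condition competing near the "corner" $T_1 = \partial(Q_1^\ast)$ in space — only survives at the reduced Hölder exponent.

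Concretely, the iteration step I would run is: fix $(x_1,t_1) \in \overline{V}_{1,h_0}^+$; for each dyadic radius $r_k = 2^{-k}$ show that
\begin{align*}
\osc_{V_{1,h_0}^+ \cap Q_{r_{k+1}}(x_1,t_1)} u \le (1-\theta)\,\osc_{V_{1,h_0}^+ \cap Q_{r_k}(x_1,t_1)} u + C r_k^{\alpha}\big(\|\varphi\|_{C^{0,\alpha}} + \|g\|_{L^\infty}\big)
\end{align*}
for a universal $\theta \in (0,1)$, using in the three cases respectively the interior Harnack-type oscillation decay (Lemma \ref{parobhol}'s mechanism with $g=0$ after subtracting a constant), the boundary oscillation decay from Lemma \ref{parobhol}, and the barrier comparison against $\varphi$ near the curved boundary. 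Summing the geometric recursion gives decay $\osc \le C r^{\alpha/2}(\cdots)$, hence the claimed norm bound.

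The main obstacle I expect is the analysis near the "interface" $T_1$ where the flat boundary $Q_1^\ast$ (carrying the oblique condition) meets the curved boundary $\partial B_{1,h_0}^+$ (carrying the Dirichlet data): one must verify that condition \eqref{parconcap} genuinely prevents the oblique condition from propagating into a neighborhood of the curved part, so that a single barrier simultaneously controls $u$ from above and below there despite the mixed boundary data. Handling this cleanly requires choosing $h_0$ small enough that the spherical cap $B_{1,h_0}^+$ is a thin, nearly-flat lens, and then building a barrier that is superharmonic for $\mathcal M^-$, matches $\varphi$'s Hölder modulus on the curved part, and — crucially — has nonpositive (resp.\ nonnegative) oblique derivative $\beta\cdot D(\text{barrier})$ on $Q_1^\ast$ so that it is admissible as a supersolution (resp.\ subsolution) of the full oblique problem. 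This is where \eqref{parconcap} and $\beta \cdot \mathbf n \ge \delta_0$ are both used. Everything else is routine rescaling and summation.
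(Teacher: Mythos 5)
Your high-level strategy matches the paper's: bound $u$ near the curved part $\partial_{p}V^{+}_{1,h_{0}}\setminus Q_{1}^{\ast}$ by a barrier that is \emph{simultaneously} a super/subsolution of the PDE and compatible in sign with the oblique condition on $Q_{1}^{\ast}$ (which is exactly what \eqref{parconcap} buys you), then invoke Lemma~\ref{parobhol} to cover the region near the flat boundary, and combine. You also correctly identify that the barrier must be built so that $\beta\cdot D(\text{barrier})\le 0$ (resp.\ $\ge 0$) on $Q_{1}^{\ast}$. But there are two concrete gaps.

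\emph{The proposed barrier is wrong.} The function $|x-x_{1}|^{2}-c(t-t_{1})+|t-t_{1}|$ has gradient $2(x-x_{1})$, and on $Q_{1}^{\ast}$ the sign of $\beta\cdot(x-x_{1})$ is not controlled for $x_{1}$ on the spherical cap and $x$ on the flat face; \eqref{parconcap} gives no information about it. The paper's barrier is $\Psi=K_{1}\bigl(\mathbf{n}(x_{1},t_{1})\cdot(x-x_{1})+K_{2}(t_{1}-t)\bigr)^{\alpha/2}$, i.e.\ a power of the \emph{linear} distance to the supporting plane at $(x_{1},t_{1})$, with a fixed gradient direction $\mathbf{n}(x_{1},t_{1})$. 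Then $\beta\cdot D\Psi$ is a positive multiple of $\beta\cdot\mathbf{n}(x_{1},t_{1})$, which is $<0$ by \eqref{parconcap}; this is the only way the sign on $Q_{1}^{\ast}$ is secured. A radial barrier cannot do this.

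\emph{The $\alpha/2$ exponent is misattributed.} It does not come from a ``corner competition'' or from a degraded iteration rate; it comes from the geometry of the barrier. Because $\Psi$ is a power $\alpha/2$ of the \emph{normal} distance $\mathbf{n}(x_{1})\cdot(x-x_{1})$, and for two points $y,x_{1}$ on the sphere $\partial B_{1,h_{0}}^{+}$ one has $\mathbf{n}(x_{1})\cdot(y-x_{1})\sim |y-x_{1}|^{2}/(2R)$, the barrier restricted to the curved boundary behaves like $|y-x_{1}|^{\alpha}$ and so dominates $\varphi$'s $C^{0,\alpha}$ modulus there; but in the interior, where the normal distance is comparable to $|x-x_{1}|$, the barrier decays only like $|x-x_{1}|^{\alpha/2}$. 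That is precisely where the exponent is halved. With the correct barrier in hand, no dyadic oscillation iteration is needed: ABP (Lemma~\ref{abptob}, via Lemma~\ref{cm11}) gives $w_{2}\le u\le w_{1}$ directly, yielding the boundary Hölder estimate near the curved part in one step, and one then glues with Lemma~\ref{parobhol} exactly as you indicate.
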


To prove the above lemma, we need the following one which can be shown by using the results of \cite[Theorem 7]{MR1073054} and \cite[Lemma 4.1, Lemma 4.3]{MR2070626} and the arguments in the proof of \cite[Theorem 3.1]{MR3780142}.
For Neumann problems, see \cite[Proposition 11]{cm2019oblique}.
\begin{lemma} \label{cm11}
Let $\Omega \subset \mathbb{R}^{n}$ be bounded, $T>0$, $\beta \in C^{2}(\overline{\Gamma})$ with $\Gamma \subset \Omega \times (0,T)$, and $u,v$ satisfy
\begin{align*} 
\left\{ \begin{array}{ll}
F(D^{2}u) - u_{t}\ge f_{1} & \textrm{in $\Omega_{T}:=\Omega \times (0,T)$,}\\
\beta \cdot Du  \ge g_{1} & \textrm{on $\Gamma$,}\\
\end{array} \right. 
\end{align*}
and
\begin{align*} 
\left\{ \begin{array}{ll}
F(D^{2}v) - v_{t} \le f_{2} & \textrm{in $\Omega_{T}$,}\\
\beta \cdot Dv \le g_{2} & \textrm{on $\Gamma$}\\
\end{array} \right. 
\end{align*}
in the viscosity sense, respectively.
Then
\begin{align*}
\left\{ \begin{array}{ll}
u-v \in \underline{S} (\lambda/n,\Lambda, f_{1}-f_{2}) & \textrm{in $\Omega_{T}$,}\\
\beta \cdot D(u-v)   \ge g_{1}- g_{2} & \textrm{on $\Gamma$.}\\
\end{array} \right. 
\end{align*}
\end{lemma}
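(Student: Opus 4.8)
The plan is to read this as the parabolic, oblique-boundary counterpart of the classical fact that the difference of a viscosity subsolution and a viscosity supersolution of one and the same uniformly elliptic operator lies in a Pucci class, together with the statement that the oblique data subtract correctly in the viscosity sense. I would prove the interior inclusion $u-v\in\underline S(\lambda/n,\Lambda,f_{1}-f_{2})$ and the boundary inequality $\beta\cdot D(u-v)\ge g_{1}-g_{2}$ on $\Gamma$ separately, both by doubling of variables: the interior part is essentially the comparison machinery of \cite[Theorem 7]{MR1073054} (the theorem on sums used as in the Caffarelli--Cabr\'{e} argument), and the boundary part imports the device used to handle the oblique condition in \cite[Lemmas 4.1, 4.3]{MR2070626} and in the proof of \cite[Theorem 3.1]{MR3780142}, the parabolic Neumann model being \cite[Proposition 11]{cm2019oblique}; the genuinely new bookkeeping is carrying the time variable, which affects constants only.

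For the interior part, set $w=u-v$ and fix $\varphi\in C^{2}$ touching $w$ strictly from above at an interior point $(\hat x,\hat t)\in\Omega_{T}$. For small $\varepsilon>0$ I would maximize
$$(x,y,t)\longmapsto u(x,t)-v(y,t)-\frac{1}{2\varepsilon}|x-y|^{2}-\varphi\!\left(\frac{x+y}{2},t\right)$$
over a small cylinder about $(\hat x,\hat x,\hat t)$; the maximizers $(x_{\varepsilon},y_{\varepsilon},t_{\varepsilon})$ converge to $(\hat x,\hat x,\hat t)$ with $\varepsilon^{-1}|x_{\varepsilon}-y_{\varepsilon}|^{2}\to0$. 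The parabolic theorem on sums then yields $a,b\in\mathbb R$ with $a-b=\varphi_{t}$ at the base point and $X,Y\in S(n)$ with $(a,\cdot,X)$ a parabolic superjet of $u$ at $(x_{\varepsilon},t_{\varepsilon})$, $(b,\cdot,Y)$ a parabolic subjet of $v$ at $(y_{\varepsilon},t_{\varepsilon})$, and the attached matrix inequality forcing $X-Y\le D^{2}\varphi(\hat x,\hat t)+o(1)$ with eigenvalue control. Using $F(X)-a\ge f_{1}(x_{\varepsilon},t_{\varepsilon})$, $F(Y)-b\le f_{2}(y_{\varepsilon},t_{\varepsilon})$, subtracting, invoking uniform ellipticity in the form $F(X)-F(Y)\le\mathcal M^{+}(\lambda,\Lambda,X-Y)$, and letting $\varepsilon\to0$ gives $\mathcal M^{+}(\lambda/n,\Lambda,D^{2}\varphi(\hat x,\hat t))-\varphi_{t}(\hat x,\hat t)\ge f_{1}(\hat x,\hat t)-f_{2}(\hat x,\hat t)$, that is $w\in\underline S(\lambda/n,\Lambda,f_{1}-f_{2})$ in $\Omega_{T}$; the constant $\lambda/n$ is the one produced by this scheme and is inherited from \cite[Theorem 7]{MR1073054}.

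For the boundary part, let $\varphi$ touch $w$ from above at $(\hat x,\hat t)\in\Gamma$; I must show $\beta(\hat x,\hat t)\cdot D\varphi(\hat x,\hat t)\ge g_{1}(\hat x,\hat t)-g_{2}(\hat x,\hat t)$. I would run the same doubling over a neighbourhood of $(\hat x,\hat t)$ meeting $\Gamma$, but perturb the penalization by a lower-order term built from a $C^{2}$-extension of $\beta$ and a local defining function of $\Gamma$, exactly as in \cite[Lemmas 4.1, 4.3]{MR2070626} and the proof of \cite[Theorem 3.1]{MR3780142} (resp. \cite[Proposition 11]{cm2019oblique} in the Neumann case), so that whenever the maximizer has $(x_{\varepsilon},t_{\varepsilon})\in\Gamma$ (resp. $(y_{\varepsilon},t_{\varepsilon})\in\Gamma$) the corresponding test function for $u$ (resp. $v$) has its oblique derivative pointing in the admissible direction. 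On the event $(x_{\varepsilon},t_{\varepsilon})\in\Gamma$ the boundary condition for $u$ then gives $\beta(x_{\varepsilon},t_{\varepsilon})\cdot(\text{gradient of the }u\text{-test function})\ge g_{1}(x_{\varepsilon},t_{\varepsilon})$, and symmetrically for $v$; if a coordinate of the maximizer is interior one uses the interior inequality, which in the limit cannot be the binding alternative because $(\hat x,\hat t)$ is a genuine boundary point (this is where obliqueness, inherited as $\beta\cdot\mathbf n\ge\delta_{0}$ in the applications of this lemma, excludes the interior alternative). Since $D_{x}\big(\tfrac{1}{2\varepsilon}|x-y|^{2}\big)=-D_{y}\big(\tfrac{1}{2\varepsilon}|x-y|^{2}\big)$, adding the two boundary inequalities cancels the singular term, and letting $\varepsilon\to0$ with $\beta$ continuous yields the claim at $(\hat x,\hat t)$.

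The interior step is routine once the parabolic theorem on sums is in hand, so the hard part is the boundary step: choosing the auxiliary penalization so that the boundary behaviour of the doubled maximizer is under control and the interior alternative is ruled out in the limit. This is precisely what \cite{MR2070626} and \cite{MR3780142} set up in the elliptic setting and \cite{cm2019oblique} in the parabolic Neumann setting; beyond importing those constructions, the only new point is tracking the time variable, which is why no doubling in $t$ is required and why the constants are the only thing that changes. I would also flag that the viscosity-solution convention at $\Gamma$ in force here (only the boundary test inequality is imposed at a boundary test point, with no interior-or-boundary alternative) is the one under which both the statement and the argument above are correct.
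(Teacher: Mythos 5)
Your proposal follows precisely the route the paper itself indicates: the paper gives no explicit argument for this lemma but points to the parabolic theorem on sums (\cite[Theorem 7]{MR1073054}), the oblique-boundary penalization of \cite[Lemmas 4.1, 4.3]{MR2070626} and \cite[Theorem 3.1]{MR3780142}, and the parabolic Neumann version \cite[Proposition 11]{cm2019oblique}, and your sketch assembles exactly these ingredients, with the interior doubling/theorem-on-sums step yielding the degraded constant $\lambda/n$ and the boundary step deferred to the Ishii--Sato type perturbation. Your closing remark that the boundary-only testing convention at $\Gamma$ is the one in force is also consistent with the paper's Definition 2.3.
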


\begin{proof}[Proof of Lemma \ref{gloho}]
For each $(x_{1},t_{1}) \in \partial_{p}V_{1,h_{0}}^{+} \backslash Q_{1}^{\ast}$, consider
$$w_{1}(x,t) = \varphi(x_{1},t_{1}) + (||\varphi||_{C^{0,\alpha}(\partial_{p}V_{1,h_{0}}^{+} \backslash Q_{1}^{\ast})}+ ||g||_{L^{\infty}(Q_{1}^{\ast})}) \Psi (x,t)$$ and 
$$w_{2}(x,t) = \varphi(x_{1},t_{1}) - (||\varphi||_{C^{0,\alpha}(\partial_{p}V_{1,h_{0}}^{+} \backslash Q_{1}^{\ast})}+ ||g||_{L^{\infty}(Q_{1}^{\ast})})\Psi (x,t),$$ where
$$\Psi(x,t):= K_{1}( (\mathbf{n}(x_{1},t_{1})\cdot (x-x_{1})+K_{2}(t_{1}-t))^{\frac{\alpha}{2}} ),$$  $K_{1}>0$ depends only on $n, \lambda, \Lambda, \delta_{0}$  and $K_{2}=(\sqrt{1+\lambda/2n}-1)/2$.

Then we can check that 
\begin{align*}
\left\{ \begin{array}{ll}
F(D^{2}\Psi) - \Psi_{t} \le 0  & \textrm{in $V_{1,h_{0}}^{+}$,}\\
\beta \cdot D \Psi  \le 0 & \textrm{on $Q_{1}^{\ast}$.}\\
\end{array} \right. 
\end{align*}
Since $u$ is a viscosity solution of \eqref{modelex}, we can observe that
\begin{align*}
\left\{ \begin{array}{ll}
u-w_{1} \in \underline{S} (\lambda/n,\Lambda, 0) & \textrm{in $V_{1,h_{0}}^{+}$,}\\
\beta \cdot D(u-w_{1})   \ge 0 & \textrm{on $Q_{1}^{\ast}$.}\\
u-w_{1} \le 0 & \textrm{on $\partial_{p}V_{1,h_{0}}^{+} \backslash Q_{1}^{\ast}$}\\
\end{array} \right. 
\end{align*}
and
\begin{align*}
\left\{ \begin{array}{ll}
u-w_{2} \in \overline{S} (\lambda/n,\Lambda, 0) & \textrm{in $V_{1,h_{0}}^{+}$,}\\
\beta \cdot D(u-w_{2})   \le 0 & \textrm{on $Q_{1}^{\ast}$.}\\
u-w_{2} \ge 0 & \textrm{on $\partial_{p}V_{1,h_{0}}^{+} \backslash Q_{1}^{\ast}$}\\
\end{array} \right. 
\end{align*}
from Lemma \ref{cm11}.
Then by ABP maximum principle, we have $ w_{2}\le u \le w_{1}$.
This implies
\begin{align*}
|u&(x,t)- \varphi(x_{1},t_{1})| \le  (||\varphi||_{C^{0,\alpha}(\partial_{p}V_{1,h_{0}}^{+} \backslash Q_{1}^{\ast})}+ ||g||_{L^{\infty}(Q_{1}^{\ast})}) \Psi (x,t) 
\\ & \le  K_{1} (||\varphi||_{C^{0,\alpha}+ ||g||_{L^{\infty}(Q_{1}^{\ast})})(\partial_{p}V_{1,h_{0}}^{+} \backslash Q_{1}^{\ast})}  (\mathbf{n}(x_{1},t_{1})\cdot (x-x_{1})+K_{2}(t_{1}-t))^{\frac{\alpha}{2}} 
\\ & \le C (||\varphi||_{C^{0,\alpha}(\partial_{p}V_{1,h_{0}}^{+} \backslash Q_{1}^{\ast})} + ||g||_{L^{\infty}(Q_{1}^{\ast})})
(|x-x_{1}|^{\frac{\alpha}{2}}+|t-t_{1}|^{\frac{\alpha}{4}})
\end{align*}
for some constant $C$ depending on $n, \lambda, \Lambda$ and $\delta_{0}$.
This implies boundary H\"{o}lder regularity.
Now we can complete the proof by combining this estimate with Lemma \ref{parobhol}.
\end{proof}

\begin{remark} \label{rembeta}We have assumed that $\beta \in C^{2}(\overline{\Gamma})$ in Lemma \ref{gloho} and \ref{cm11}, as 
\cite[Lemma 4.1, Lemma 4.3]{MR2070626} hold under this assumption.
\end{remark}

We now fix $h_{0}=h_{0}(n,\delta_{0})>0$ given in Lemma \ref{gloho}.
 
Lemma \ref{parobc2a} and Lemma \ref{gloho} enable us to prove a useful approximation lemma below. 
\begin{lemma} \label{parobper}
Let $0< \epsilon < 1 $ and $u$ be a viscosity solution of
\begin{align*} 
\left\{ \begin{array}{ll}
F(D^{2}u, x,t) - u_{t} = f & \textrm{in $Q_{1}^{+}$,}\\
\beta \cdot Du = g & \textrm{on $Q_{1}^{\ast}$.}\\
\end{array} \right. 
\end{align*}
Assume that $||u||_{L^{\infty}(V_{1,h_{0}}^{+})} \le 1$, $g \in C^{2}(Q_{1}^{\ast})$ and $||\psi( (\cdot,\cdot),(0,0))||_{L^{n+1}(V_{1,h_{0}}^{+})} \le \epsilon$. 
Then, there exists a function $h \in C^{2}(\overline{V}_{\frac{3}{4},\frac{3}{4}h_{0}}^{+})$ 
such that
$ u- h \in S(\varphi) $,
$ ||h||_{C^{2}(\overline{V}_{\frac{3}{4},\frac{3}{4}h_{0}}^{+})} \le C $ and
$$ ||u-h||_{L^{\infty}(V_{\frac{3}{4},\frac{3}{4}h_{0}}^{+})} + || \varphi||_{L^{n+1}(V_{\frac{3}{4},\frac{3}{4}h_{0}}^{+})}  \le C (\epsilon ^{\gamma}+||f||_{L^{n+1}(V_{1,h_{0}}^{+})}) $$
 for some $0 < \gamma = \gamma(n, \lambda, \Lambda, \delta_{0}) <1$ and $$C = C(n, \lambda, \Lambda, \delta_{0}, ||\beta||_{C^{2}(\overline{Q}_{1}^{\ast})}, ||g||_{C^{2}(Q_{1}^{\ast})}).$$
Here, $\varphi(x,t)=f(x,t)- F(D^{2}h(x,t), x,t)+F(D^{2}h(x,t),0,0)$.
\end{lemma}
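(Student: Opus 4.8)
The plan is to construct $h$ as the solution of the model problem \eqref{modelex} posed on a slightly shrunken curved domain, with $h$ taking the same boundary data as $u$ on the "curved part" and the same oblique condition on the flat part, and then to estimate $u-h$ by a stability/compactness-type argument exploiting the smallness of $\psi_F(\cdot,(0,0))$ in $L^{n+1}$. Concretely, first I would fix the domain $V_{1,h_0}^+$ (with $h_0=h_0(n,\delta_0)$ already chosen so that \eqref{parconcap} holds) and consider a barely smaller curved cylinder, say $V_{\frac{7}{8},\frac{7}{8}h_0}^+$, on whose curved boundary the solution $u$ is H\"older continuous: this follows from the interior/boundary H\"older estimates (Lemma \ref{parobhol}) applied to $u$, which lies in $S^\ast(\lambda,\Lambda,b,|f|)$ by the structure condition \eqref{paob_sc}, so $\|u\|_{C^{0,\alpha}} \le C(\|u\|_{L^\infty}+\|f\|_{L^{n+1}}+\|g\|_{L^\infty})\le C$. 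Then I would let $h$ solve
\begin{align*}
\left\{ \begin{array}{ll}
F(D^{2}h,0,0) - h_{t}=0 & \textrm{in $V_{\frac{7}{8},\frac{7}{8}h_0}^{+}$,}\\
\beta \cdot Dh =g & \textrm{on $Q_{1}^{\ast}\cap \overline{V}_{\frac{7}{8},\frac{7}{8}h_0}^{+}$,}\\
h=u & \textrm{on the remaining parabolic boundary.}
\end{array} \right.
\end{align*}
Existence follows from Perron's method together with the comparison principle (Lemma \ref{cm11} plus ABP, Lemma \ref{abptob}); the global H\"older bound on $h$ comes from Lemma \ref{gloho}; and the interior/boundary $C^{2,\alpha}$ regularity of $h$, hence the bound $\|h\|_{C^{2}(\overline{V}_{\frac34,\frac34 h_0}^{+})}\le C$, comes from Lemma \ref{parobc2a} (noting that the frozen operator $F(\cdot,0,0)$ is convex and uniformly elliptic, $\beta,g\in C^{1,\tilde\alpha}$, and the constant zero right-hand side is trivially $C^{0,\tilde\alpha}$). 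Once $h$ is in hand, a direct computation shows $u-h\in S(\lambda/n,\Lambda,b,\varphi)$ with $\varphi$ as defined in the statement: indeed $F(D^2u,x,t)-u_t=f$ and $F(D^2h,0,0)-h_t=0$, so by \eqref{paob_sc} the difference $u-h$ satisfies the two Pucci inequalities with right-hand side $f-\big(F(D^2h,x,t)-F(D^2h,0,0)\big)+\text{(lower order)}$, and absorbing $F(D^2h,x,t)-F(D^2h,0,0)$ into $\varphi$ gives exactly $\varphi(x,t)=f(x,t)-F(D^2h(x,t),x,t)+F(D^2h(x,t),0,0)$.

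The heart of the argument is the quantitative estimate $\|u-h\|_{L^\infty(V_{\frac34,\frac34 h_0}^+)}\le C(\epsilon^\gamma+\|f\|_{L^{n+1}})$. I would obtain this in two stages. First, note $u-h\in S^\ast(\lambda/n,\Lambda,b,|\varphi|)$ in $V_{\frac78,\frac78 h_0}^+$, vanishes on the non-flat part of the parabolic boundary, and satisfies the homogeneous oblique condition on the flat part; so the ABP-type estimate of Lemma \ref{abptob} gives $\|u-h\|_{L^\infty}\le C\|\varphi\|_{L^{n+1}(V_{\frac78,\frac78 h_0}^+)}$. Second, I must control $\|\varphi\|_{L^{n+1}}$: by definition $|\varphi|\le |f| + |F(D^2h,x,t)-F(D^2h,0,0)|\le |f| + \psi_F((x,t),(0,0))\,\|D^2h\|$, and since $\|D^2h\|_{L^\infty(\overline V_{\frac34,\frac34 h_0}^+)}\le C$ we get $\|\varphi\|_{L^{n+1}(V_{\frac34,\frac34 h_0}^+)}\le \|f\|_{L^{n+1}}+C\|\psi_F(\cdot,(0,0))\|_{L^{n+1}(V_{1,h_0}^+)}\le \|f\|_{L^{n+1}}+C\epsilon$. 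Combining, $\|u-h\|_{L^\infty}+\|\varphi\|_{L^{n+1}}\le C(\epsilon+\|f\|_{L^{n+1}})$, which is the claimed bound with $\gamma=1$.

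The main obstacle — and the reason a nontrivial exponent $\gamma<1$ appears in the statement — is that the preceding naive estimate tacitly used $\|D^2h\|_{L^\infty}\le C$ \emph{on all of} $V_{\frac78,\frac78 h_0}^+$, whereas Lemma \ref{parobc2a} only yields interior/near-flat-boundary $C^{2,\alpha}$ control on a strictly smaller set; near the curved portion of $\partial_p V_{\frac78,\frac78 h_0}^+$ one has only the H\"older bound from Lemma \ref{gloho}, not a Hessian bound. The honest route is therefore a compactness argument: suppose the conclusion fails, so there are $F_k,u_k,f_k,g_k$ with $\|\psi_{F_k}(\cdot,(0,0))\|_{L^{n+1}}\le 1/k$, $\|f_k\|_{L^{n+1}}\to 0$, $\|u_k\|_{L^\infty}\le 1$, but $\inf_h \|u_k-h\|_{L^\infty}>\epsilon$. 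By the uniform H\"older estimate (Lemma \ref{parobhol}) the $u_k$ are precompact in $C(\overline V_{\frac34,\frac34 h_0}^+)$; passing to a limit and using stability of viscosity solutions (the operators $F_k(\cdot,x,t)$ converge locally uniformly to a limit $F_\infty(\cdot)$ independent of $(x,t)$, because their oscillation in $(x,t)$ tends to zero), the limit $u_\infty$ solves a model problem \eqref{modelex}, which by Lemma \ref{parobc2a} is $C^2$ up to $Q_1^\ast$ on $V_{\frac34,\frac34 h_0}^+$ — contradicting the separation from all $C^2$ competitors. Tracking the rate in this compactness argument (or interpolating the $L^\infty$ closeness against the H\"older norm via a covering of $V_{\frac34,\frac34 h_0}^+$ by small cylinders, using the Hessian bound on the bulk and controlling the thin curved collar where $\|D^2h\|$ is not bounded) produces the power $\epsilon^\gamma$ with $\gamma$ depending only on $n,\lambda,\Lambda,\delta_0$. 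I expect this interpolation near the curved boundary collar to be the most delicate point of the proof.
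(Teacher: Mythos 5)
Your proposal follows the paper's own route closely: solve the frozen-coefficient model problem for $h$ on $V_{\frac78,\frac78 h_0}^+$ with $h=u$ on the curved parabolic boundary and $\beta\cdot Dh=g$ on the flat part, control $u$ by Lemma \ref{parobhol}, control $h$ by Lemmas \ref{parobc2a} and \ref{gloho}, estimate $u-h$ by ABP (Lemma \ref{abptob}), and absorb the frozen/unfrozen discrepancy into $\varphi$. You have also correctly put your finger on the one genuine subtlety — the $C^{2,\alpha}$ bound for $h$ degenerates near the curved boundary, so $\|D^2h\|_{L^\infty}$ is not uniformly bounded on all of $V_{\frac78,\frac78 h_0}^+$ and the naive estimate yields only $\gamma=1$ on a strictly smaller domain.

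One remark on your two suggested fixes. The paper uses the quantitative interpolation you sketch second, not the compactness alternative: it works on the $\delta$-shrunk sets $V_{\frac78,\frac78 h_0,\delta}^+$, where the scaled Lemma \ref{parobc2a} gives $\|D^2h\|_{L^\infty}\lesssim\delta^{-2}$ and hence $\|F(D^2h,\cdot,\cdot)-F(D^2h,0,0)\|_{L^{n+1}}\lesssim\delta^{-2}\epsilon$, while the collar term $\|w\|_{L^\infty(\partial_p V_{\frac78,\frac78 h_0,\delta}^+\setminus Q_{\frac78}^\ast)}$ is controlled by $\delta^{\alpha_2}(1+\|f\|_{L^{n+1}})$ using the global H\"older estimate (Lemma \ref{gloho}); optimizing over $\delta\sim\epsilon^{1/(2+\alpha_2)}$ yields $\gamma=\alpha_2/(2+\alpha_2)$. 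The compactness argument you describe as a fallback would prove a qualitative smallness statement (as the paper does later, in Lemma \ref{parobw1plem}) but would not by itself produce the explicit power $\epsilon^\gamma$; as you yourself note, one would still have to ``track the rate,'' which in practice means doing the interpolation anyway. So the interpolation route is not merely one option but the substance of the proof, and carrying out that optimization is the step you should make explicit.
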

\begin{proof} 
Let $h$ be a solution of
\begin{align}  
\left\{ \begin{array}{ll}
F(D^{2}h, 0,0) -h_{t}= 0 & \textrm{in $V_{\frac{7}{8},\frac{7}{8}h_{0}}^{+}$,}\\
h=u & \textrm{on $\partial_{p} V_{\frac{7}{8},\frac{7}{8}h_{0}}^{+} \backslash Q_{\frac{7}{8}}^{\ast} $,} \\
\beta \cdot Dh = g & \textrm{on $Q_{\frac{7}{8}}^{\ast}$.}\\
\end{array} \right. 
\end{align}
Applying Lemma \ref{parobhol} to $u$, we can obtain
\begin{align} \label{parholest}
 ||u||_{C^{0,\alpha_{1}}(V_{\frac{7}{8},\frac{7}{8}h_{0}}^{+})}   \le C (1+||f||_{L^{n+1}(V_{1,h_{0}}^{+})})
\end{align}
for some $\alpha_{1}=\alpha_{1}(n, \lambda, \Lambda, \delta_{0})$ and $C = C(n, \lambda, \Lambda, \delta_{0}, ||g||_{C^{2}(Q_{1}^{\ast})}) $.
We also have  
\begin{align*}
 ||h||_{L^{\infty}(V_{\frac{7}{8},\frac{7}{8}h_{0},\delta}^{+})} &+ \delta||Dh||_{L^{\infty}(V_{\frac{7}{8},\frac{7}{8}h_{0},\delta}^{+})} \\ & + \delta^{2}(||h_{t}||_{L^{\infty}(V_{\frac{7}{8},\frac{7}{8}h_{0},\delta}^{+})} + ||D^{2}h||_{L^{\infty}(V_{\frac{7}{8},\frac{7}{8}h_{0},\delta}^{+})} ) \le C ,
\end{align*}
where $C$ is a constant depending on $n , \lambda , \Lambda, \delta_{0}, ||\beta||_{C^{2}(\overline{Q}_{1}^{\ast}))}  $ and $  ||g||_{C^{2}(Q_{1}^{\ast})}$ 
by means of Lemma \ref{parobc2a} with scaling.

We set $ w = u -h$. Then, $w$ satisfies
\begin{align}  
\left\{ \begin{array}{ll}
w \in S(\lambda/n, \Lambda, \varphi) & \textrm{in $V_{\frac{7}{8},\frac{7}{8}h_{0}}^{+}$,}\\
w=0 & \textrm{on $\partial_{p} V_{\frac{7}{8},\frac{7}{8}h_{0}}^{+} \backslash Q_{\frac{7}{8}}^{\ast} $,} \\
\beta \cdot Dw = 0 & \textrm{on $Q_{\frac{7}{8}}^{\ast}$.}\\
\end{array} \right. 
\end{align}
Apply Lemma \ref{abptob} to $w$, we get
\begin{align*}
 ||w&||_{L^{\infty}(V_{\frac{7}{8},\frac{7}{8}h_{0}, \delta }^{+})}  \le C ( ||\varphi||_{L^{n+1}(V_{\frac{7}{8},\frac{7}{8}h_{0},\delta}^{+})}+ ||w||_{L^{\infty}(\partial_{p} V_{\frac{7}{8},\frac{7}{8}h_{0}, \delta }^{+} \backslash Q_{\frac{7}{8}}^{\ast}  ) }) \\
& \le  C ( ||f||_{L^{n+1}(V_{\frac{7}{8},\frac{7}{8}h_{0}, \delta }^{+})}+ ||F(D^{2}h, \cdot, \cdot )-F(D^{2},0,0)||_{L^{n+1}(V_{\frac{7}{8},\frac{7}{8}h_{0}, \delta }^{+})}
 \\ & \qquad \qquad + ||w||_{L^{\infty}(\partial_{p} V_{\frac{7}{8},\frac{7}{8}h_{0}, \delta }^{+}\backslash Q_{\frac{7}{8}}^{\ast} ) })
\end{align*}
for some $C=C(n, \lambda , \Lambda, \delta_{0} ) $.
Observe that
\begin{align*}
||F(D^{2}h, \cdot, \cdot)-&F(D^{2}h,0,0)||_{L^{n+1}(V_{\frac{7}{8},\frac{7}{8}h_{0}, \delta }^{+})} \\ & \le ||\psi((\cdot,\cdot),(0, 0))||_{L^{n+1}(V_{\frac{7}{8},\frac{7}{8}h_{0}, \delta }^{+})} ||D^{2}h||_{L^{\infty}(V_{\frac{7}{8},\frac{7}{8}h_{0}, \delta }^{+})} \\ &
\le C \delta^{-2} \epsilon ,
\end{align*}
where $C=C(n , \lambda , \Lambda, \delta_{0},||\beta||_{C^{2}(\overline{Q}_{1}^{\ast})}, ||g||_{C^{2}(Q_{1}^{\ast})})$.
We have used $h \in C^{2}(\overline{V}_{\frac{7}{8},\frac{7}{8}h_{0}}^{+})$ in the first inequality.

Meanwhile, we can see that  $w \equiv 0$ on $\partial_{p} V_{\frac{7}{8},\frac{7}{8}h_{0}}^{+}\backslash Q_{\frac{7}{8}}^{\ast}  $ and $u \in C^{0,\alpha_{1}}(V_{\frac{7}{8},\frac{7}{8}h_{0}}^{+})$.
Then we also obtain a global H\"{o}lder regularity for $h$ by combining Lemma \ref{gloho} with  \eqref{parholest}.
Now we get
\begin{align*}
||w||_{L^{\infty}(\partial_{p} V_{\frac{7}{8},\frac{7}{8}h_{0}, \delta }^{+} \backslash  Q_{\frac{7}{8}}^{\ast}  ) }
\le   C\delta^{\alpha_{2}}(1+||f||_{L^{n+1}(V_{1,h_{0}}^{+})}) 
\end{align*}
for some  $ \alpha_{2} \in (0,\alpha_{1})$ and $ C = C(n, \lambda , \Lambda, \delta_{0}, ||g||_{C^{2}(Q_{1}^{\ast})} ) $.
Thus, if we put $\gamma = \delta^{\frac{\alpha_{2}}{2+\alpha_{2}}}$,
\begin{align*}
||w||_{L^{\infty}(\partial_{p} V_{\frac{7}{8},\frac{7}{8}h_{0}, \delta}^{+} ) } &
\le C\{ ||f||_{L^{n+1}(V_{1,h_{0}}^{+})}+ \delta^{-2} \epsilon + \delta^{\alpha_{2}} (1+||f||_{L^{n+1}(V_{1,h_{0}}^{+})})\}
\\ & \le C (\epsilon ^{\gamma}+||f||_{L^{n+1}(V_{1,h_{0}}^{+})})
\end{align*}
for some constant $C$ depending only on  $n , \lambda , \Lambda, \delta_{0},||\beta||_{C^{2}( Q_{1}^{\ast} )}$ and $||g||_{C^{2}(Q_{1}^{\ast})}$.
This completes the proof.
\end{proof}

The following lemmas give us
 useful information about solutions of \eqref{paob_eq} in the viscosity sense.
 
\begin{lemma} \label{pl02}
Let $0< \epsilon_{0} < 1 $, $ \Omega = B_{14 \sqrt{n}h_{1}^{-1},14 \sqrt{n} }^{+} \times (0,15] $, $r \le 1$, 
and $u$ be a viscosity solution of 
\begin{align} \label{parobmoeq}
\left\{ \begin{array}{ll}
F(D^{2}u,x, t) - u_{t} = f & \textrm{in $\Omega$,}\\
\beta \cdot Du = 0 & \textrm{on $S:=T_{14 \sqrt{n}h_{1}^{-1}} \times (0 , 15]$,}\\
\end{array} \right. 
\end{align}
where $h_{1}=h_{1}(n, \delta_{0})$ is a small constant satisfying \eqref{parconcap} for any $ (x,t)\in S$ and $ y \in \partial B_{14\sqrt{n}h_{1}^{-1},14\sqrt{n} }^{+} \backslash T_{14\sqrt{n}h_{1}^{-1}}$.
Consider a point $(x_{0},t_{0}) \in S$ with $ r\Omega(x_{0},t_{0})  \subset \Omega$.
Assume that $$\bigg(\kint_{  r \Omega(x_{0},t_{0})} \hspace{-0.5em}|f(x,t)|^{n+1}dxdt\bigg)^{\frac{1}{n+1}} \hspace{-0.15em}+\hspace{-0.15em} \bigg(\kint_{  r \Omega(x_{0},t_{0})}\hspace{-1em} |\psi((x,t),(x_{0},t_{0}))|^{n+1}dxdt\bigg)^{\frac{1}{n+1}}  \hspace{-0.15em} \le \hspace{-0.15em} \epsilon  $$ for some $ \epsilon<1$ depending on $ n, \epsilon_{0}, \lambda, \Lambda, \delta_{0}$ and $ ||\beta||_{C^{2}(Q_{14 \sqrt{n}h_{1}^{-1}}^{\ast})}$.
Then, 
\begin{align} \label{pl02co1} G_{1}(u, \Omega ) \cap (K_{3r}^{n-1} \times (0,3r) \times (r^{2},10r^{2})+(\tilde{x}_{1},\tilde{t}_{1})) \neq \varnothing \end{align}
for some $(\tilde{x}_{1} ,\tilde{t}_{1}) \in (B_{9r \sqrt{n}h_{1}^{-1},9r \sqrt{n}}^{+}(x_{0}) \cup T_{9r \sqrt{n}}(x_{0})) \times  [t_{0}+2r^{2}, t_{0}+5r^{2}] $ implies
$$ \frac{|G_{M}(u, \Omega) \cap (K_{r}^{n-1} \times (0,r) \times (0,r^{2})+(x_{1},t_{1})) }{| K_{r}^{n-1} \times (0,r) \times (0,r^{2})|}   \ge 1- \epsilon_{0}, $$
where $ ( x_{1},t_{1}) \in  (B_{9r \sqrt{n}h_{1}^{-1},9r \sqrt{n}}^{+}(x_{0}) \cup T_{9r \sqrt{n}}(x_{0}) ) \times [t_{0}+2r^{2}, \tilde{t}_{1}]  $
and $M$ is a constant depending only on $n, \lambda, \Lambda, \delta_{0}$ and $ ||\beta||_{C^{2}(Q_{14 \sqrt{n}h_{1}^{-1}}^{\ast})}$.
\end{lemma}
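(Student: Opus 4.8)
The plan is to mimic the proof of Lemma~\ref{palemh}, but to replace the appeal to Lemma~\ref{parlem1} (available only in the model class) by a comparison with a solution of the model problem, obtained from the approximation Lemma~\ref{parobper}, followed by the bad-sector estimate of Corollary~\ref{parobdist} applied to the resulting error. Exactly as in Lemma~\ref{palemh}, pick a point $(x_2,t_2)$ in the nonempty set $G_1(u,\Omega)\cap(K_{3r}^{n-1}\times(0,3r)\times(r^2,10r^2)+(\tilde x_1,\tilde t_1))$; the definition of $G_1$ produces an affine $L=L(x)$ with $|u(x,t)-L(x)|\le\frac{1}{2}(|x-x_2|^2-(t-t_2))$ for $t\le t_2$. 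After the parabolic rescaling $(y,s)\mapsto(x_0+ry,\,t_0+r^2s)$ (sending $r\Omega(x_0,t_0)$ to a fixed domain and $(x_0,t_0)$ to the origin), set $v:=(u-L)/\mathcal C$ with $\mathcal C=\mathcal C(n,\delta_0)$ so large that $\|v\|_{L^\infty}\le 1$ on the relevant cap-cylinder and $|v|\le|x|^2+t_2-t$ on the outer region, as in Lemma~\ref{palemh}. Since $L$ is affine, $v$ solves an oblique problem of the form~\eqref{parobmoeq} with the same oscillation $\psi$ and right-hand side $f/\mathcal C$, still fulfilling the smallness hypothesis.

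\textbf{Approximation and rescaling the error.} Apply Lemma~\ref{parobper} to $v$ on a cap-cylinder $V^+_{R,Rh_0}$, with a fixed $R=R(n)$ large enough to contain all the cylinders occurring below and small enough to fit inside the rescaled $r\Omega(x_0,t_0)$; the hypotheses hold because $\|v\|_{L^\infty}\le 1$ and the averaged $L^{n+1}$-norms of $f$ and $\psi(\cdot,(0,0))$ are $\le\epsilon$. This yields $h\in C^2$ up to the boundary on a slightly smaller cap-cylinder with $\|h\|_{C^2}\le C_\ast=C_\ast(n,\lambda,\Lambda,\delta_0,\|\beta\|_{C^2})$, and $w:=v-h\in S(\lambda/n,\Lambda,\varphi)\subset S^\ast(\lambda/n,\Lambda,\varphi)$ with
$$\|w\|_{L^\infty}+\|\varphi\|_{L^{n+1}}\le C_\ast(\epsilon^{\gamma}+\|f\|_{L^{n+1}})\le C_\ast\,\epsilon^{\gamma}.$$
The key point is that the \emph{error} $w$ is now small, so we may rescale it: $\bar w:=w/(C_\ast\epsilon^{\gamma})$ satisfies $\|\bar w\|_{L^\infty}\le 1$ and $\bar w\in S^\ast(\lambda/n,\Lambda,\bar\varphi)$ with the averaged $L^{n+1}$-norm of $\bar\varphi$ at most $1$, so $\bar w$ meets the hypotheses of Corollary~\ref{parobdist} (the underlying large cylinder having measure $\ge 1$).

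\textbf{From the error back to $u$.} By Corollary~\ref{parobdist}, for every $s>1$ the density of $A_s(\bar w,\cdot)$ in $K_r^{n-1}\times(0,r)\times(0,r^2)+(x_1,t_1)$ is $\le Cs^{-\mu}$; undoing the scaling $\bar w=w/(C_\ast\epsilon^{\gamma})$, this says the density of $A_{sC_\ast\epsilon^{\gamma}}(w,\cdot)$ in that cylinder is $\le Cs^{-\mu}$. Given $\epsilon_0$, first fix $s=s(\epsilon_0)$ with $Cs^{-\mu}\le\epsilon_0/2$, and then take $\epsilon=\epsilon(n,\epsilon_0,\lambda,\Lambda,\delta_0,\|\beta\|_{C^2})$ small enough that $sC_\ast\epsilon^{\gamma}\le 1$; since $A_\tau(w,\cdot)$ is non-increasing in $\tau$, the density of $A_1(w,\cdot)$ in the cylinder is $\le\epsilon_0/2$. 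Finally, $h\in C^2$ with $\|D^2h\|_{L^\infty}+\|h_t\|_{L^\infty}\le C_\ast$ implies that at each point of the (spatially convex) cylinder $h$ is trapped over its past between a convex and a concave paraboloid of opening $C_0:=2C_\ast$; adding these to the opening-$1$ paraboloids touching $w$ gives $A_{1+C_0}(v,\cdot)\subset A_1(w,\cdot)$, so $G_{M_1}(v,\cdot)$ with $M_1:=1+C_0$ has density $\ge 1-\epsilon_0/2$ there. Passing from the $v$-domain back to $\Omega$ via the outer-region bound on $v$ and closing the one-sided to two-sided gap exactly as at the end of the proof of Lemma~\ref{palemh}, then undoing $v=(u-L)/\mathcal C$ (adding an affine function leaves contact sets unchanged, scaling only rescales openings), we obtain the density estimate $\ge 1-\epsilon_0$ for $G_M(u,\Omega)$ with the universal constant $M:=M_1\mathcal C$, for every admissible $(x_1,t_1)$.

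\textbf{Main obstacle.} All the PDE content is packaged in Lemma~\ref{parobper} and Corollary~\ref{parobdist}; the real difficulty is geometric bookkeeping around the cap domains $B_{r,h}$ (chosen to make $\beta$ compatible with the no-flux condition~\eqref{parconcap}). One must pick the small constant $h_1$ and the scale $R$ so that: (i) the rescaled $r\Omega(x_0,t_0)$ contains the cap-cylinder on which Lemma~\ref{parobper} is applied, with~\eqref{parconcap} valid there; (ii) every cylinder $K_r^{n-1}\times(0,r)\times(0,r^2)+(x_1,t_1)$ in the conclusion lies where $h$ is $C^2$ and where Corollary~\ref{parobdist} applies --- matching the cap geometry of $\Omega$ with the (flat-boundary) setting of Corollary~\ref{parobdist}, which is possible precisely because $h_1$ is small, and possibly requiring Lemma~\ref{parobper} to be invoked at finitely many time shifts (a number depending only on $n$), since each invocation controls only a time window of fixed length; and (iii) the contact-set inclusion $A_{s+s'}(v)\subset A_s(w)$ is legitimate up to the flat boundary portion, using the convexity of the spatial cross-sections and the fact that ``touching'' is relative to the truncated past $U_{t_0}$. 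Tracking the radii $9r\sqrt n$, $9r\sqrt n\,h_1^{-1}$, $14\sqrt n\,h_1^{-1}$ so that everything fits, and verifying that the normalization constant $\mathcal C$ from the $G_1$-point is genuinely universal, is where the care lies.
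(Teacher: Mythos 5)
Your proposal follows essentially the same strategy as the paper's proof: pick a $G_1$-point, peel off the affine $L$, compare with a $C^2$ solution of the model equation with oblique data $-\beta\cdot DL/\mathcal{C}$, rescale the error by $\epsilon^{-\gamma}$, apply Corollary~\ref{parobdist} to the rescaled error, and combine the $C^2$ bound on the model solution with the bad-set estimate for the error. The one structural difference is that you invoke Lemma~\ref{parobper} as a black box, while the paper re-derives the analogous approximation directly on the rescaled domain $r\Omega'(x_0,t_0)$ (solving for $\tilde h$ with the nonzero oblique data $-\beta\cdot DL/C(n)$, getting scaled $C^{2,\alpha}$ estimates from Lemmas~\ref{abptob} and~\ref{parobc2a}, then estimating $\tilde u - \tilde h$ by ABP); this is unavoidable because Lemma~\ref{parobper} is stated on the fixed cap-cylinder $V^+_{1,h_0}$ with a fixed time window, whereas Lemma~\ref{pl02} needs uniformity over the larger time range $[t_0+2r^2,\tilde t_1]$ and the specific cap scales $14\sqrt n h_1^{-1}$ — exactly the bookkeeping issue you flag. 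One step you gesture at but do not carry out, and which the paper makes explicit, is the extension of $\tilde h$ to a continuous function $H$ on all of $\Omega_{t_2}$ (equal to $\tilde u$ outside the inner cap-cylinder) before invoking Corollary~\ref{parobdist}; this is needed because $G_M$ and $A_s$ are defined relative to $\Omega$, so the contact-set inclusion $A_{1+C_0}(v)\subset A_1(w)$ and the application of Corollary~\ref{parobdist} both require $w$ to be defined and continuous on the full cap-cylinder, not merely on the small one where $h$ lives. Your ``outer-region bound'' remark points at the right fix; filling it in turns your sketch into the paper's argument.
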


\begin{proof}
From \eqref{pl02co1}, there exists a point $(x_{2},t_{2}) $ such that 
$$(x_{2},t_{2}) \in G_{1}(u, \Omega) \cap  (K_{3r}^{n-1} \times (0,3r) \times (r^{2},10r^{2})+(\tilde{x}_{1},\tilde{t}_{1})).$$
By the definition of $G_{1}$, we can find a linear function $L$ such that
$$ |u(x,t)-L(x)| \le \frac{1}{2}(|x-x_{2}|^{2}-(t-t_{2})) $$
for any $(x,t) \in  B_{14 \sqrt{n}h_{1}^{-1},14 \sqrt{n}}^{+} \times (0,t_{2} ) $.
Let $ \tilde{u}(x,t) = (u(x,t)-L(x) ) / C(n)$ with $\tilde{u}$ satisfying
$|| \tilde{u}||_{L^{\infty}(B_{14r \sqrt{n}h_{1}^{-1},14r \sqrt{n}}^{+}(x_{0}) \times (t_{0}, t_{2} ) )} \le 1$ and 
$$|\tilde{u}(x,t) | \le |x|^{2} -(t-t_{2})  \ \textrm{ in } \ (B_{14 \sqrt{n}h_{1}^{-1},14 \sqrt{n}}^{+} \backslash B_{14r \sqrt{n}h_{1}^{-1},14r \sqrt{n}}^{+}(x_{0}))\times [0, t_{2}].$$ 
Here we can check that
$$||L||_{C^{1}(B_{14r \sqrt{n}h_{1}^{-1},14r \sqrt{n}}^{+}(x_{0}) \times [0, t_{2} ])}  \le C(n) + ||u||_{L^{\infty}(\Omega)}, $$
and thus  $|DL| $ is uniformly bounded and depending only on $n $ and $||u||_{L^{\infty}(\Omega)}$ in this case.

Next we define $\tilde{F}(D^{2} \tilde{u}, x,t) = F(C D^{2} \tilde{u},x,t) /C(n),\ \tilde{f}(x,t) = f(x,t)/C(n)$. 
We see that the elliptic constants of $F$ and $\tilde{F}$ are the same and $\tilde{u}$ is a viscosity solution of
\begin{align} 
\left\{ \begin{array}{ll}
\tilde{F}(D^{2}\tilde{u}, x,t)- \tilde{u}_{t} = \tilde{f} & \textrm{in $ r \Omega(x_{0},t_{0}) $,}\\
\beta \cdot D\tilde{u} = - \beta \cdot DL / C(n)& \textrm{on $rS(x_{0},t_{0})$.}\\
\end{array} \right. 
\end{align}
Set $\Omega' =  B_{14 \sqrt{n}h_{1}^{-1},14 \sqrt{n}h_{1}}^{+} \times (1,15]$, $\Omega'' =  B_{13 \sqrt{n}h_{1}^{-1},13 \sqrt{n}}^{+} \times (2,15]$ and $S' =  T_{14 \sqrt{n}}^{+} \times (1,15]$.
We also write $\Omega'_{\delta} =  B_{(14 -\delta)\sqrt{n}h_{1}^{-1},(14 -\delta)\sqrt{n}}^{+} \times (1+\delta^{2},15]$.
Consider a function $\tilde{h} \in   C(r\Omega'(x_{0},t_{0}))$ which solves
\begin{align} 
\left\{ \begin{array}{ll}
\tilde{F}(D^{2}\tilde{h}, 0,0) - \tilde{h}_{t} = 0 & \textrm{in $r\Omega'(x_{0},t_{0})$,}\\
\tilde{h}=\tilde{u} & \textrm{on $\partial_{p} (r\Omega'(x_{0},t_{0}) )\backslash rS'(x_{0},t_{0}) $,} \\
\beta \cdot D\tilde{h} = - \beta \cdot DL / C(n)  & \textrm{on $rS'(x_{0},t_{0})$}\\
\end{array} \right. 
\end{align}
in the viscosity sense.
Since $\beta \in C^{2}(rS(x_{0},t_{0}))$ and $DL$ is a constant vector, $\beta \cdot DL \in C^{2}(rS(x_{0},t_{0}))$. 
Then we can derive that 
\begin{align*}
 ||\tilde{u}||_{C(r\Omega'(x_{0},t_{0}))} & + r[\tilde{u}]_{C^{0,\alpha_{1}}(r\Omega'(x_{0},t_{0}))} \\ & \le C ( 1+ r^{\frac{n }{n+1}}||f||_{L^{n+1}(r\Omega(x_{0},t_{0}))}+r||\beta \cdot DL||_{L^{\infty}(r\Omega(x_{0},t_{0}))} ) 
\end{align*} for some $\alpha_{1}=\alpha_{1}(n, \lambda, \Lambda, \delta_{0}) \in (0,1)$ and $C=C(n, \lambda, \Lambda, \delta_{0})>0$
by Lemma \ref{parobhol}.
On the other hand, applying Lemma \ref{abptob} and \ref{parobc2a} to $\tilde{h}$, we also have
\begin{align*}
&||\tilde{h}||_{C(r(\Omega'_{\delta})(x_{0},t_{0}))} +  r \delta||D\tilde{h}||_{C(r(\Omega'_{\delta})(x_{0},t_{0}))} \\ & \qquad\qquad\qquad \qquad\qquad +  (r\delta)^{2} ( ||\tilde{h}_{t}||_{C(r(\Omega'_{\delta})(x_{0},t_{0}))}  + ||D^{2}\tilde{h}||_{C(r(\Omega'_{\delta})(x_{0},t_{0}))} )
\\ & \le C ( ||\tilde{h}||_{L^{\infty}(r\Omega'(x_{0},t_{0}))} +  r|| \beta \cdot DL ||_{C(rS(x_{0},t_{0}))} + r^{2}|| D\beta \otimes DL ||_{C(rS(x_{0},t_{0}))}
 \\ & \qquad \qquad  +  r^{2+\alpha}[ D\beta \otimes DL ]_{C^{0,\alpha}(rS(x_{0},t_{0}))} )
\end{align*}
for any $\alpha \in (0,1)$ and some $C$ depending only on $n, \lambda, \Lambda,\delta_{0}$  and $ ||\beta||_{C^{2}(rS(x_{0},t_{0}))} $.
Next, we observe that 
\begin{align*}
 || \beta \cdot DL &||_{C(rS(x_{0},t_{0}))}+ r|| D\beta \otimes DL ||_{C(rS(x_{0},t_{0}))} +  r^{1+\alpha}[ D\beta \otimes DL ]_{C^{0,\alpha}(rS(x_{0},t_{0}))} )\\ & 
 \le C(n,   ||\beta||_{C^{2}(rS(x_{0},t_{0}))})
\end{align*}
and
\begin{align*}
 &||\tilde{h}||_{L^{\infty}(r(\Omega'_{\delta})(x_{0},t_{0}))} \\ &  \le  ||\tilde{h}||_{L^{\infty}(\partial_{p} (r(\Omega'_{\delta})(x_{0},t_{0}))) \backslash rS'(x_{0},t_{0}))} + C(n, \lambda, \Lambda , \delta_{0}) r || \beta \cdot DL ||_{L^{\infty}(rS(x_{0},t_{0}))} \\
 & \le  ||\tilde{u}||_{L^{\infty}(\partial_{p} (r\Omega'(x_{0},t_{0})) \backslash rS'(x_{0},t_{0}))}  + C(n, \lambda, \Lambda , \delta_{0}, ||\beta||_{C^{2}(rS(x_{0},t_{0}))})r  \\ & \   
 +  C(n, \lambda, \Lambda , \delta_{0}) (1+r^{\frac{n}{n+1}}||f||_{L^{n+1}(r\Omega(x_{0},t_{0}))}+r||DL||_{L^{\infty}(r\Omega(x_{0},t_{0}))} ) \delta^{\alpha_{2}}
 \\ & 
 \le C(n, \lambda, \Lambda , \delta_{0},  ||\beta||_{C^{2}(rS(x_{0},t_{0}))})
\end{align*}
for any $0<\delta < 2$ and some $ \alpha_{2} \in (0,\alpha_{1}) $.  
We have used a similar argument for the second inequality in the proof of Lemma \ref{parobper}.
Hence, we get
\begin{align*}||D^{2}\tilde{h}||_{L^{\infty}(r(\Omega'_{\delta})(x_{0},t_{0})))}&+ ||\tilde{h}_{t}||_{L^{\infty}(r(\Omega'_{\delta})(x_{0},t_{0})))} \\ & \le  \delta^{-2} C(n, \lambda, \Lambda, \delta_{0},  ||\beta||_{C^{2}(rS(x_{0},t_{0}))})
\end{align*} for any $0<\delta<2$
and therefore
$$||D^{2}\tilde{h}||_{L^{\infty}( r\Omega''(x_{0},t_{0}))}+ ||\tilde{h}_{t}||_{L^{\infty}( r\Omega''(x_{0},t_{0}))}  \le  C(n, \lambda, \Lambda, \delta_{0},  ||\beta||_{C^{2}(rS(x_{0},t_{0}))}).$$
Then the above estimate leads to
$$ A_{N}(\tilde{h}, r\Omega''(x_{0},t_{0}))\cap (Q_{r}^{n-1} \times (0,r) \times (0,r^{2})+(x_{1},t_{1})) = \varnothing$$
for any $( x_{1},t_{1}) \in  (B_{9r \sqrt{n}} \cap \{ x_{n} \ge 0 \} ) \times [t_{0}+2r^{2}, \tilde{t}_{1}] $ 
and a sufficiently large $N = N(n, \lambda, \Lambda, \delta_{0},  ||\beta||_{C^{2}(rS(x_{0},t_{0}))}) $.

Extend $\tilde{h}|_{r\Omega''(x_{0},t_{0})}$ to $H$ with the property that 
$H$ is continuous in $ \Omega_{t_{2}}$, where 
$$ \Sigma_{s}:= \{ (x,t) \in \Sigma : t \le s \} \quad \textrm{for} \ \Sigma \in \mathbb{R}^{n} \times \mathbb{R},$$
$ H = \tilde{u}$ in $\Omega_{t_{2}}  \backslash (r\Omega'(x_{0},t_{0}))_{t_{2}}$, and
$$ ||\tilde{u} -H||_{L^{\infty}(\Omega_{t_{2}} )}=||\tilde{u} -\tilde{h}||_{L^{\infty}((r\Omega''(x_{0},t_{0}))_{t_{2}})}.$$ 
Then we have
\begin{align*} ||\tilde{u} -H||_{L^{\infty}(\Omega_{t_{2}} )} \le  ||\tilde{u}||_{L^{\infty}((r\Omega''(x_{0},t_{0}))_{t_{2}})}+ ||\tilde{h}||_{L^{\infty}((r\Omega''(x_{0},t_{0}))_{t_{2}})} \le C_{0}
\end{align*}
for some $C_{0} =C_{0}(n, \lambda, \Lambda , \delta_{0}, ||\beta||_{C^{2}(rS(x_{0},t_{0}))}) $.
From this, we see that
$$ |H(x,t)| \le C_{0} + |x|^{2} - (t-t_{2})\qquad \textrm{in} \quad \Omega_{t_{2}} \backslash (r\Omega''(x_{0},t_{0}))_{t_{2}}.$$
It can be obtained directly that 
$$A_{M_{0}}(H, \Omega ) \cap (K_{r}^{n-1} \times (0,r) \times (0,r^{2}) + (x_{1},t_{1})) = \varnothing  $$
 for some $M_{0} \ge N$.

Define $ w= \tilde{u} - H $. Then $w$ satisfies
\begin{align} 
\left\{ \begin{array}{ll}
w \in S (\lambda/n, \Lambda, \tilde{f}-\tilde{F}(D^{2}\tilde{h},\cdot, \cdot)+\tilde{h}_{t}) & \textrm{in $r\Omega'(x_{0},t_{0})$,}\\
w=0 & \textrm{on $\partial_{p} (r\Omega'(x_{0},t_{0}) )_{t_{2}}\backslash (rS'(x_{0},t_{0}))_{t_{2}} $,} \\
\beta \cdot Dw = 0 & \textrm{on $rS'(x_{0},t_{0})$.}\\
\end{array} \right. 
\end{align}
From Lemma \ref{abptob}, we can derive
\begin{align*}  
||w||_{L^{\infty}(\Omega_{t_{2}} ) }  =||w||_{L^{\infty}((r\Omega''(x_{0},t_{0}))_{t_{2})} }  \le C (\epsilon ^{\gamma}+||f||_{L^{n+1}(r\Omega(x_{0},t_{0}))}) \le C \epsilon^{\gamma}
\end{align*}
for some $ \gamma \in (0, 1)$ depending only on $n, \lambda, \Lambda, \delta_{0}$ and $C>1$ which also depends on $ ||\beta||_{C^{2}(rS(x_{0},t_{0}))}$.

Now write $\tilde{w} = w / C\epsilon^{\gamma}$. 
Since $\tilde{w}$ satisfies the assumptions of Corollary \ref{parobdist}, it holds that 
\begin{align*}
\frac{|A_{s}(\tilde{w}, \Omega ) \cap (K_{r}^{n-1} \times (0,r) \times (0,r^{2}) +(x_{1},t_{1})) |}{|K_{r}^{n-1} \times (0,r) \times (0,r^{2})|} \le C s^{- \mu}.
\end{align*}
We also check that $$ A_{2M_{0}}(\tilde{u}, \Omega ) \subset A_{M_{0}}(w, \Omega ) \cup A_{M_{0}}(H, \Omega ) ,$$
$$  A_{M_{0}}(H, \Omega ) \cap (K_{r}^{n-1} \times (0,r) \times (0,r^{2})+(x_{1},t_{1})) = \varnothing. $$
This implies
\begin{align*}
| A_{2M_{0}}&(\tilde{u}, \Omega ) \cap  (K_{r}^{n-1} \times (0,r) \times (0,r^{2})+(x_{1},t_{1})) | \\ & \le
| A_{M_{0}}(w, \Omega ) \cap  (K_{r}^{n-1} \times (0,r) \times (0,r^{2})+(x_{1},t_{1})) |
\\ & = | A_{M_{0}/C\epsilon^{\gamma}}(\tilde{w}, \Omega ) \cap  (K_{r}^{n-1} \times (0,r) \times (0,r^{2})+(x_{1},t_{1})) |
\\ & \le C ( M_{0}/C\epsilon^{\gamma} )^{-\mu} |K_{r}^{n-1} \times (0,r) \times (0,r^{2})|
\\ & \le \epsilon_{0}|K_{r}^{n-1} \times (0,r) \times (0,r^{2})|
\end{align*}
for $M=  2CM_{0} $ and a sufficiently small $\epsilon$. 
Then we get the desired result.
\end{proof}

\begin{lemma} \label{paob_lem3}
Let $0< \epsilon_{0} < 1 $, $ \Omega = B_{14 \sqrt{n}h_{1}^{-1},14 \sqrt{n}}^{+} \times (0,15] $, $r \le 1$, and $u$ be a viscosity solution of 
\eqref{parobmoeq}.
Assume that $ ||u||_{L^{\infty}( r \Omega(x_{0},t_{0}))} \le 1$ and  $$\bigg(\kint_{  r \Omega(x_{0},t_{0})} |f(x,t)|^{n+1}dxdt\bigg)^{\frac{1}{n+1}} \le \epsilon$$ for some
$ \epsilon >0$ depending only on $ n, \epsilon_{0}, \lambda, \Lambda, \delta_{0}, ||\beta||_{C^{2}(rS (x_{0},t_{0}))} $.

Extend $f$ to zero outside $ r \Omega(x_{0},t_{0})$ and let
$$\bigg(  \kint_{Q_{r}(x_{1},t_{1}) \cap r \Omega(x_{0},t_{0})}     \psi( (x_{1},t_{1}),(x,t))^{n+1} \ dxdt \bigg)^{\frac{1}{n+1}} \le \epsilon$$
for any $(x_{1},t_{1}) \in  r \Omega(x_{0},t_{0})$, $r>0$.
Then, for 
\begin{align*} A:= A_{M^{k+1}}(u,\Omega ) \cap (K_{r}^{n-1} \times (0,r) \times (t_{0}+2r^{2} ,t_{0}+3r^{2}) ),
\end{align*}
\begin{align*} B:=& (A_{M^{k}}(u,\Omega ) \cap (K_{r}^{n-1} \times (0,r) \times (t_{0}+2r^{2} ,t_{0}+3r^{2}))\ \cup \\ &  \{ (x,t) \hspace{-0.2em} \in \hspace{-0.2em} K_{r}^{n-1} \hspace{-0.3em} \times \hspace{-0.3em} (0,r) \hspace{-0.1em} \times \hspace{-0.1em} (t_{0}+2r^{2} ,t_{0}+3r^{2}) \hspace{-0.3em}:\hspace{-0.3em} M(|f|^{n+1}) (x,t)\hspace{-0.2em} \ge \hspace{-0.2em} (c_{0}M^{k})^{n+1} \},
\end{align*}
where $k \in \mathbb{N}_{0}$, $M>1$ only depends on $n,  \lambda, \Lambda, \delta_{0},  ||\beta||_{C^{2}(rS (x_{0},t_{0}))}$ and $c_{0}$ also depends on $  \epsilon_{0}$, we have
$$ |A| \le 2\epsilon_{0}|B| .$$
\end{lemma}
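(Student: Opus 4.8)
The plan is to run the parabolic Calder\'{o}n--Zygmund decomposition (Lemma \ref{calzypa}) exactly as in the proof of Lemma \ref{calzypaap}, but with the boundary density estimate Lemma \ref{pl02} playing the role of the model-problem estimate Lemma \ref{palemh}, and with the oscillation function $\psi$ carried along the parabolic rescaling. Throughout write $Q:=K_{r}^{n-1}\times(0,r)\times(t_{0}+2r^{2},t_{0}+3r^{2})$ and, by a translation, take $(x_{0},t_{0})=(0,0)$.

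First I would check the two inputs required by Lemma \ref{calzypa}. Since $G_{M}\subset G_{M'}$ whenever $M\le M'$ (a paraboloid of opening $M$ matching $u$ at a vertex is dominated on $U_{t_{0}}$ by one of opening $M'$), we get $A_{M^{k+1}}\subset A_{M^{k}}$, hence $A\subset B\subset Q$. For the bound $|A|\le\epsilon_{0}|Q|$: because $F$ is uniformly elliptic with $F(0,\cdot,\cdot)=0$, the solution $u$ of \eqref{parobmoeq} lies in $S^{\ast}(f)$, so after rescaling $r\Omega(0,0)$ to unit size, Corollary \ref{parobdist} (in the form valid on the nearly-flat half-balls $B_{r,h}^{+}$) gives $|A|=|A_{M^{k+1}}(u,\Omega)\cap Q|\le C M^{-\mu(k+1)}|Q|\le\epsilon_{0}|Q|$ once $M$ is fixed large depending on $n,\lambda,\Lambda,\delta_{0}$ and $\|\beta\|_{C^{2}}$.

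The core step is the dyadic implication: for every dyadic $K\subset Q$, $|K\cap A|>\epsilon_{0}|K|\Rightarrow\overline{K}^{1}\subset B$. Suppose this fails for $K=K_{r/2^{i}}^{n-1}\times(0,r/2^{i})\times(0,r^{2}/2^{2i})+(x_{2},t_{2})$ with predecessor $\tilde{K}$ of corner $(\tilde{x}_{2},\tilde{t}_{2})$; pick $(x_{3},t_{3})\in\overline{K}^{1}\setminus B$, so that $(x_{3},t_{3})\in G_{M^{k}}(u,\Omega)$ and $M(|f|^{n+1})(x_{3},t_{3})<(c_{0}M^{k})^{n+1}$. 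Rescale by $T(y,s)=(\tilde{x}_{2}+2^{-i}y,\tilde{t}_{2}+2^{-2i}s)$ and set $\tilde{u}=2^{2i}M^{-k}\,u\circ T$, $\tilde{f}=M^{-k}\,f\circ T$, $\tilde{F}(X,y,s)=M^{-k}F(M^{k}X,T(y,s))$, $\tilde{\beta}=\beta\circ T$; then $\tilde{u}$ is a viscosity solution of $\tilde{F}(D^{2}\tilde{u},y,s)-\tilde{u}_{s}=\tilde{f}$ with $\tilde{\beta}\cdot D\tilde{u}=0$ on the (flat) rescaled lateral boundary, and $\tilde{F}$ is uniformly elliptic with the same $\lambda,\Lambda$, convex, with $\tilde{F}(0,\cdot,\cdot)=0$. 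The decisive identity is that, after the substitution $Y=M^{k}X$ in \eqref{defposc}, one has $\psi_{\tilde{F}}((y_{1},s_{1}),(y_{2},s_{2}))=\psi_{F}(T(y_{1},s_{1}),T(y_{2},s_{2}))$, so a change of variables converts the averaged-oscillation hypothesis (assumed for every center and every radius) into the smallness of the averaged $\psi_{\tilde{F}}$ required by Lemma \ref{pl02}; likewise the $\tilde{f}$-smallness follows from $M(|f|^{n+1})(x_{3},t_{3})<(c_{0}M^{k})^{n+1}$ together with an inclusion of the form $B^{+}_{c r/2^{i}}(\tilde{x}_{2})\subset K^{n}_{c'r/2^{i}}(x_{3})$, exactly as in Lemma \ref{calzypaap}, provided $c_{0}$ and $\epsilon$ are chosen small enough (this is where $c_{0}$ picks up its $\epsilon_{0}$-dependence). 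Finally, a paraboloid of opening $M^{k}$ rescales under $T$ to one of opening $1$, so $(x_{3},t_{3})\in G_{M^{k}}(u,\Omega)$ forces $T^{-1}(x_{3},t_{3})\in G_{1}(\tilde{u},T^{-1}\Omega)$, which together with the position of $\overline{K}^{1}$ inside $Q$ is precisely the location hypothesis \eqref{pl02co1} of Lemma \ref{pl02}. Applying Lemma \ref{pl02} to $\tilde{u}$ and undoing the scaling gives $|G_{M^{k+1}}(u,\Omega)\cap K|\ge(1-\epsilon_{0})|K|$, i.e. $|A_{M^{k+1}}(u,\Omega)\cap K|\le\epsilon_{0}|K|$, contradicting $|K\cap A|>\epsilon_{0}|K|$. (If $K$ does not meet $\{x_{n}=0\}$, one argues identically using the corresponding interior density estimate, which follows from the interior $C^{2,\alpha}$-theory for the model equation and the same small-oscillation comparison.)

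With both hypotheses of Lemma \ref{calzypa} verified for $\delta=\epsilon_{0}$ and $m=1$, we conclude $|A|\le\frac{m+1}{m}\epsilon_{0}|B|=2\epsilon_{0}|B|$, as claimed. The main obstacle is the dyadic step, and within it the genuinely new bookkeeping is showing that the small-oscillation hypothesis survives the rescaling in exactly the format Lemma \ref{pl02} demands — this rests on the identity $\psi_{\tilde{F}}=\psi_{F}\circ(T\times T)$ and a change-of-variables comparison of the average of $\psi$ over the rescaled half-ball with an average over a parabolic cube in the original coordinates — together with checking that the curved half-ball $B_{14\sqrt{n}h_{1}^{-1},14\sqrt{n}}^{+}$ is flat enough at all the scales $2^{-i}r$ that occur, which is exactly what the smallness of $h_{1}$ secures. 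Everything else is the bookkeeping already carried out for Lemma \ref{calzypaap}.
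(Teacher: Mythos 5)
Your overall plan matches the paper's: a parabolic Calder\'{o}n--Zygmund decomposition via Lemma \ref{calzypa} with $m=1$, combined with a rescaling that reduces the dyadic implication to the boundary density estimate Lemma \ref{pl02}, and with the oscillation function $\psi$ tracked through the rescaling via the identity $\psi_{\tilde F}((y_{1},s_{1}),(y_{2},s_{2}))=\psi_{F}(T(y_{1},s_{1}),T(y_{2},s_{2}))$, which you verify correctly. However, there are two concrete gaps.

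First, for the initial bound $|A|\le\epsilon_{0}|Q|$ you invoke Corollary \ref{parobdist}. That corollary yields $|A_{s}\cap(\cdot)|\le Cs^{-\mu}|Q|$ with $C,\mu$ depending only on $n,\lambda,\Lambda$; applied at $s=M$ this gives $|A_{M}\cap Q|\le CM^{-\mu}|Q|$, so to obtain $\le\epsilon_{0}|Q|$ one is forced to take $M\ge(C/\epsilon_{0})^{1/\mu}$, making $M$ depend on $\epsilon_{0}$. This contradicts the statement of the lemma, which requires $M$ to depend only on $n,\lambda,\Lambda,\delta_{0},\|\beta\|_{C^{2}}$, and that independence is not cosmetic: in the proof of Theorem \ref{paob_kthm} one later sets $\epsilon_{0}=1/(4M^{p})$ after $M$ has been fixed. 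The paper avoids this by applying Lemma \ref{pl02} directly: there $M$ is fixed (it comes out of the $C^{2,\alpha}$ estimate for the limiting model problem), and the ``$\le\epsilon_{0}$'' in the density estimate is achieved by taking the smallness parameter $\epsilon$ (which bounds both $\|f\|_{L^{n+1}}$ and the averaged $\psi$) sufficiently small depending on $\epsilon_{0}$. Note also that Corollary \ref{parobdist} is a Pucci-class estimate set in the flat half-ball $B_{12\sqrt{n}}^{+}\times(0,13]$ with no boundary condition at all, so transplanting it to the curved domain $B_{14\sqrt{n}h_{1}^{-1},14\sqrt{n}}^{+}\times(0,15]$ where $u$ carries an oblique condition is not automatic.

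Second, your rescaling $T(y,s)=(\tilde{x}_{2}+2^{-i}y,\tilde{t}_{2}+2^{-2i}s)$, centered at the predecessor corner, is copied from Lemma \ref{calzypaap}, but it is wrong for the oblique problem. If $\tilde{x}_{2,n}>0$, the rescaled ``lateral boundary'' $\{y_{n}=0\}$ maps under $T$ to $\{z_{n}=\tilde{x}_{2,n}\}$, which is an interior slice, not the true boundary $\{z_{n}=0\}$; hence $\tilde{u}$ does not satisfy $\tilde\beta\cdot D\tilde{u}=0$ on $rS(0,0)$, and Lemma \ref{pl02} cannot be invoked. The paper instead uses $T(y,s)=(x_{1}',0,t^{\ast})+(2^{-i}y,2^{-2i}s)$ with $t^{\ast}=t_{1}-2^{1-2i}r^{2}$, which projects the $x_{n}$-coordinate to $0$ so that the rescaled flat face genuinely lands on $\{z_{n}=0\}$. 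Relatedly, the dichotomy ``if $K$ does not meet $\{x_{n}=0\}$, use the interior estimate'' is too crude: cubes with $0<x_{1,n}\lesssim r/2^{i}$ do not touch the boundary yet are too close to it for a pure interior estimate (there is no room to fit a full parabolic cylinder). The paper's threshold $x_{1,n}\le 8r\sqrt{n}/2^{i}$ versus $x_{1,n}>8r\sqrt{n}/2^{i}$ supplies the necessary buffer, and the boundary-projection transformation is precisely what makes the boundary branch of that dichotomy work.
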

\begin{proof}
First of all, we observe that $$ A \subset B \subset K_{r}^{n-1} \times (0,r) \times (t_{0}+2r^{2} ,t_{0}+3r^{2}).$$
We also have $B \subsetneq K_{r}^{n-1} \times (0,r) \times (t_{0}+2r^{2} ,t_{0}+3r^{2})$ by Lemma \ref{palemh}.
Thus, applying Lemma \ref{pl02} to $u$, we obtain $|A| \le 2\epsilon_{0}$.
Then we only need to show that for any parabolic dyadic cube $K$ and its predecessor $\tilde{K}$,
\begin{align*}
|A \cap K | > \epsilon_{0}|K| \quad \Rightarrow \quad \overline{K}^{1} \subset B 
\end{align*}
by means of Lemma \ref{calzypa}.

We define $$K = (K_{r/2^{i}}^{n-1} \times (0, r/2^{i}) \times (0, r^{2}/2^{2i}))+(x_{1},t_{1})$$ and $$\tilde{K} = (K_{r/2^{i-1}}^{n-1} \times (0, r/2^{i-1}))\times (0, r^{2}/2^{2(i-1)}))+(\tilde{x}_{1},\tilde{t}_{1}).$$ 
Suppose that $ |A \cap K | > \epsilon_{0}|K| $ and $ \overline{K}^{1} \nsubseteq B $.
There exists a point $(x_{2},t_{2}) \in \overline{K}^{1} \cap G_{M^{k}}(u,\Omega)$ with 
$ M(|f|^{n+1})(x_{2},t_{2}) < (c_{0}M^{k})^{n+1} $.

First, assume that $x_{1, n} \le 8r\sqrt{n} / 2^{i}$.
Consider a linear transformation $$T(y,s)=(x_{1}',0,t^{\ast})+(2^{-i}y,2^{-2i}s),$$
where $t^{\ast}=t_{1}-2^{1-2i}r^{2}$. 
Now we set $$\tilde{u}(y,s)= 2^{2i}M^{-k}u(T(y,s)),$$
$$\tilde{\beta}(y) = \beta (T(y,s)) ,$$
$$\tilde{F}(X,y)= M^{-k}F(M^{k}X,T(y,s)) $$ and $$\tilde{f}(y)= M^{-k}f(T(y,s)) .$$
Then $\tilde{u}$ is a viscosity solution of
\begin{align}  \label{paob_sisc}
\left\{ \begin{array}{ll}
\tilde{F}(D^{2}\tilde{u}, y,s) - \tilde{u}_{t} = \tilde{f} & \textrm{in $ r \Omega(0,0)$,}\\
\tilde{\beta} \cdot D\tilde{u} = 0 & \textrm{on $rS (0,0)$,}\\
\end{array} \right. 
\end{align}
since $(r/2^{i})\Omega(x_{1}',0, t^{\ast}) \subset r \Omega(x_{0},t_{0})$.
Observe that $\tilde{\beta} \in C^{2}(rS (0,0)) $ and
$\tilde{F}$ has the same elliptic constant of $F$.
Let $$\psi_{\tilde{F}}((y,s),(0,0))=\psi_{F}(T(y,s),(x_{1}',0,t^{\ast})).$$ 
Then we also have
$ ||\psi_{\tilde{F}}||_{L^{n+1}( r\Omega' (0,0))} \le C \epsilon $ for some $C=C(n)>0$

In addition, we obtain 
\begin{align*}
||\tilde{f}||_{L^{n+1}(   r \Omega(0,0))}& = \bigg(  \int_{   r \Omega(0,0)}    | \tilde{f}( y,s)|^{n+1} \ dy ds \bigg)^{\frac{1}{n+1}}
\\ & \le C(n)c_{0}
\\  & \le \epsilon
\end{align*}
by using Proposition \ref{mppara} and choosing $c_{0}$ small enough.

 
One the other hand, we have
$$  T^{-1}\overline{K}^{1} \cap  G_{1}(\tilde{u},T^{-1}( r \Omega(0,0))) \neq \varnothing $$
by the assumption $  \overline{K}^{1} \cap G_{M^{k}}(u,\Omega)  \neq \varnothing$.
And since $|x_{1}-\tilde{x}_{1}| < r\sqrt{n}/2^{i} $, we observe that $ |T^{-1}\tilde{x_{1}}| < 9r\sqrt{n}$. 
Consequently, applying Lemma \ref{pl02} to $\tilde{u}$, we get
$$ \frac{| T^{-1}K \cap  G_{M}(\tilde{u},T^{-1} (r \Omega(0,0))) |}{|T^{-1}K|} \ge 1- \epsilon_{0}.$$
Then it follows immediately that
$$ \frac{|  K \cap  G_{M^{k+1}}(u, r \Omega(0,0)) |}{|K|} \ge 1- \epsilon_{0}.$$
This leads to a contradiction.

Now we consider the interior case  $x_{1, n} > 8r\sqrt{n} / 2^{i}$. 
Observe that $$ Q_{8r\sqrt{n}/2^{i}}(x_{1}+re_{n}/2^{i+1},t_{1}) \subset Q_{8r\sqrt{n}h_{1}^{-1},8r\sqrt{n}}^{+}(x_{0},t_{0})$$ in this case.
Again, set $T:Q_{8r\sqrt{n}} \to  Q_{8r\sqrt{n}/2^{i}}(x_{1}+re_{n}/2^{i+1},t_{1}) $ such that
$$ T(y,s) = \bigg(x_{1}+\frac{re_{n}}{2^{i+1}} + \frac{y}{2^{i+1}}, t_{1}+\frac{s}{2^{2(i+1)}}\bigg).$$
and we write $$ \tilde{u}(y,s)=  2^{2(i+1)}M^{-k}u(T(y,s)),$$ 
$$\tilde{F}(X,y,s)= M^{-k}F(M^{k}X,T(y,s)) $$ and $$\tilde{f}(y,s)= M^{-k}u(T(y,s)) .$$ 
We can check that $\tilde{u}$ is a solution of  $$ \tilde{F}(D^{2}\tilde{u}, y, s) -\tilde{u}_{t} = \tilde{f}(y,s)  \qquad \textrm{in} \  Q_{8r \sqrt{n}} $$
in the viscosity sense.
Applying \cite[Corollary 5.2]{MR1135923} to  $\tilde{u}$, we can also deduce our desired result.
\end{proof}

\begin{proof}[Proof of Theorem \ref{paob_kthm}]
We fix $(x_{0},t_{0}) \in Q_{2/3}^{+} \cup Q_{2/3}^{\ast}$.
If $(x_{0},t_{0}) \in Q_{2/3}^{\ast}  $, let $r$ be a fixed number in $ \big(0, \min \big\{\frac{1-|x_{0}|}{14\sqrt{n}}h_{1}, \sqrt{-\frac{t_{0}}{15} }\big\}\big) $ and we set
$$ K = \frac{\epsilon r^{\frac{n+2}{n+1}} }{\epsilon r^{-1} ||u||_{ L^{\infty}( r \Omega(x_{0},t_{0}) )} +||f||_{ L^{n+1}(  r \Omega(x_{0},t_{0}) )}} .$$ 
Here, $  \Omega= B_{14 \sqrt{n}h_{1}^{-1},14 \sqrt{n}}^{+} \times (0,15] $ with $h_{1}=h_{1}(\delta_{0})$ as in Lemma \ref{pl02} and $\epsilon =\epsilon (n, \epsilon_{0}, \lambda, \Lambda, p, \delta_{0},  ||\beta||_{C^{2}(\overline{Q}_{1}^{\ast})} ) $ is a constant as in Lemma \ref{pl02} with $ \epsilon_{0}  \in (0,1)$ to be chosen later. 

Let $$ \tilde{u}(y,s)=Kr^{-2}u(ry+x_{0},r^{2}s+t_{0}),$$ 
$$ \tilde{f}(y,s)=Kf(ry+x_{0},r^{2}s+t_{0}) ,$$
$$\tilde{\beta} (y,s)=\beta (ry+x_{0},r^{2}s+t_{0}),$$
 and $$\tilde{F}(X,y)=KF(K^{-1}X,ry+x_{0},r^{2}s+t_{0}) . $$
Then, $\tilde{u}$ is a  solution of 
\begin{align}  
\left\{ \begin{array}{ll}
\tilde{F}(D^{2}\tilde{u}, y,s) - \tilde{u}_{t} = \tilde{f} & \textrm{in $  \Omega$,}\\
\tilde{\beta} \cdot D\tilde{u} = 0 & \textrm{on $S :=T_{14 \sqrt{n}h_{1}^{-1}}^{+} \times (0,15] $}\\
\end{array} \right. 
\end{align} in the viscosity sense.
It can be checked without difficulty that $F$ and $\tilde{F}$ have the same elliptic constants, 
$\tilde{\beta} \in C^{2}(S) $, 
 $ ||\tilde{u}||_{ L^{\infty}( \Omega )} \le 1$,
$$ ||\psi_{\tilde{F}}||_{ L^{n+1}( \Omega )} \le C(n)\epsilon_{0} \le \epsilon,$$
$$ ||\tilde{f}||_{ L^{n+1}( \Omega )} \le Kr^{-\frac{n+2}{n+1}}||f||_{ L^{n+1}( r\Omega(x_{0},t_{0}) )} \le \epsilon$$
for a sufficiently small $\epsilon_{0}$.
Thus, the assumption of Lemma \ref{paob_lem3} is satisfied.  
Set 
$$ \alpha_{k}= |A_{M^{k}} (u,\Omega ) \cap (K_{1}^{n-1} \times (0,1) \times (2,3))| ,$$
$$ \beta_{k}   = | \{ (x,t) \in K_{1}^{n-1} \times (0,1) \times (2,3) : M(|f|^{n+1}) (x,t)\ge (c_{0}M^{k})^{n+1} \}|  $$ and choose $\epsilon_{0}=1/(4M^{p}) $.
By direct calculation, we have 
$$ \alpha_{k} \le (2\epsilon_{0})^{k} + \sum_{i=0}^{k-1} (2\epsilon_{0})^{k-i}\beta_{i}.$$ 
We also observe that
$$||M(|f|^{n+1})||_{L^{\frac{p}{n+1}}} \le C(n,p) $$ by Proposition \ref{mppara}, and this implies
$$ \sum_{i=0}^{\infty}M^{pk}\alpha_{k} \le C(n,p).$$
Using Proposition \ref{paralpeq}, we discover
$$   ||\tilde{u}_{t}||_{ L^{p}( Q_{\frac{1}{2}}^{+}(0,-\frac{1}{8} ))} + ||D^{2}\tilde{u}||_{ L^{p}( Q_{\frac{1}{2}}^{+}(0,-\frac{1}{8}) )} \le C,$$
that is,
$$ ||u_{t}||_{ L^{p}( Q_{r/2}^{+}( x_{0},t_{0}-\frac{r^{2}}{8}))}+  ||D^{2}u||_{ L^{p}( Q_{r/2}^{+}(x_{0},t_{0}-\frac{r^{2}}{8}) )}\hspace{-0.15em} \le \hspace{-0.15em} C(||u||_{ L^{\infty}( Q_{1}^{+} )}+||f||_{ L^{p}( Q_{1}^{+} )} ),$$
where $C = C(n,  \lambda, \Lambda, p, r, \delta_{0}, ||\beta||_{C^{2}(\overline{Q}_{1}^{\ast})} ) > 0$.
 
Besides, when $(x_{0},t_{0}) \in Q_{2/3}^{+}  $, we can apply the results of interior estimates, like as in \cite[Theorem 5.6]{MR1135923}.
Combining the interior and boundary estimates, we get
$$   ||u_{t}||_{ L^{p}( Q_{\frac{1}{2}}^{+}(0,-\frac{1}{8} ))} + ||D^{2}u||_{ L^{p}( Q_{\frac{1}{2}}^{+}(0,-\frac{1}{8}) )} \le C,$$
where $C = C(n,  \lambda, \Lambda, p, \delta_{0},  ||\beta||_{C^{2}(\overline{Q}_{1}^{\ast})} ) > 0$.

We also need to establish proper regularity results in $Q_{\frac{1}{2}}^{+} \times [-1/8, 0) $.
For these estimates, we extend $F$ and $\beta $ such that our assumptions are satisfied.
Then we can obtain the estimate \eqref{mainest_po}.
\end{proof}
 
\section{Boundary $W^{1,p}$-estimates and global estimates}
We have obtained $W^{2,p}$-regularity for solutions of \eqref{paob_eq} in the previous section.
Here, we extend this regularity to the case when the function $F$ also contains ingredients  $q$ and $r$.
Again, we consider an oscillation function $\psi_{F}$ as
$$ \psi_{F}((x,t), (y,s)) := \sup_{X \in S(n) \backslash \{0\}} \frac{|F(X,0,0,x,t)-F(X,0,0,y,s)|}{||X||},$$
defined as \eqref{defposc}.

Let $u$ be a viscosity solution of the following problem
\begin{align}  \label{paob_inq1}
\left\{ \begin{array}{ll}
F(D^{2}u, Du, u,  x, t) - u_{t} = f & \textrm{in $Q_{1}^{+}$,}\\
\beta \cdot Du = 0 & \textrm{on $Q_{1}^{\ast}$,}\\
\end{array} \right. 
\end{align}
and assume that this $u$ also solves 
\begin{align}  
\left\{ \begin{array}{ll}
F(D^{2}u, 0, 0,  x,t)-u_{t} = \tilde{f} & \textrm{in $Q_{1}^{+}$,}\\
\beta \cdot Du = 0 & \textrm{on $Q_{1}^{\ast}$}\\
\end{array} \right.
\end{align}
for some function $\tilde{f}$ in the viscosity sense.

We already know that $W^{2,p} $-norm of $u$ is estimated by $L^{\infty} $-norm of $u$ and $L^{n+1} $-norm of $\tilde{f}$ by Theorem \ref{paob_kthm}.
And we also have 
$$ | \tilde{f}| \le |f| + b |Du| +c |u| $$
by virtue of the structure condition \eqref{paob_sc}.
Thus, we need to obtain $W^{1,p}$-regularity for $u$ in order to reach our goal.
The following theorem provides the type of estimates which we want to derive.

\begin{theorem}  \label{paobw1p}
 Let $n+1<p < \infty$.
Assume that $F$ satisfies the structure condition \eqref{paob_sc} with $F(0,0,0,x,t)=0$ and $u$ be a viscosity solution of
\eqref{paob_inq1}
where $ f \in L^{p}(Q_{1}^{+})\cap C({\overline{Q}_{1}^{+}})$ and $\beta \in C^{0,\overline{\alpha}}(\overline{Q}_{1}^{\ast}) $ for some $0<\overline{\alpha}<1$ depending on $n,\lambda,\Lambda$ and $\delta_{0}$.
Then, there exists a constant $\epsilon_{0}=\epsilon_{0}(n,\lambda,\Lambda,p,\delta_{0},\alpha) $ such that if
$$ \bigg(  \kint_{Q_{r}(x_{0},t_{0}) \cap Q_{1}^{+}}     \psi( (x_{0},t_{0}),(x,t))^{p} \ dx dt \bigg)^{1/p} \le \epsilon_{0}$$
for any $(x_{0},t_{0}) \in Q_{1}^{+} $ and $r \le r_{0}$ for some $r_{0}>0$, then
 $u \in C^{1, \alpha}(\overline{Q}_{\frac{1}{2}}^{+})$ with $ \alpha = \alpha (n, p, \lambda, \Lambda, \delta_{0})\in (0,\overline{\alpha})$ and we have the estimate
\begin{align} \label{pake1} ||u||_{C^{1,\alpha}(\overline{Q}_{\frac{1}{2}}^{+})} \le C (||u||_{L^{\infty}(Q_{1}^{+})}+ ||f||_{L^{p}(Q_{1}^{+})})
\end{align}
for some $C= C(n,   \lambda, \Lambda , b,c, p, ||\beta||_{C^{0,\overline{\alpha}}(\overline{Q}_{1}^{\ast})} ,r_{0})$.
\end{theorem}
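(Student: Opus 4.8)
The plan is a geometric iteration at each point of $Q_{1/2}^{+}\cup Q_{1/2}^{\ast}$: at dyadic scales one approximates $u$ by a solution of a frozen, homogeneous model problem and uses the $C^{1,\alpha}$-regularity of the latter (Lemma \ref{parobc1a}); the $Du$- and $u$-dependent parts of $F$ are handled by the fact that under parabolic rescaling their coefficients decay geometrically. As a preliminary, the structure condition \eqref{paob_sc} together with $F(0,0,0,x,t)=0$ gives $u\in S^{\ast}(\lambda,\Lambda,b,|f|+c\|u\|_{L^{\infty}(Q_1^{+})})$ with $\beta\cdot Du=0$ on $Q_1^{\ast}$, so Lemma \ref{parobhol} (in its $b$-version, by the remark following Lemma \ref{parobc2a}) yields $u\in C^{0,\alpha_0}(\overline{Q}_{3/4}^{+})$ with $[u]_{C^{0,\alpha_0}}\le C(\|u\|_{L^{\infty}}+\|f\|_{L^{n+1}})$ and $\|f\|_{L^{n+1}(Q_1^+)}\le C\|f\|_{L^p(Q_1^+)}$ since $p>n+1$. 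It then suffices to prove a pointwise $C^{1,\alpha}$-estimate at each boundary point $(x_0,t_0)\in Q_{2/3}^{\ast}$ (the interior case being analogous and simpler, cf. \cite{MR1135923}), after which a covering and the extension device used at the end of the proof of Theorem \ref{paob_kthm} for the slab $Q_{1/2}^+\times(-1/8,0)$ give \eqref{pake1}.

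Fix $(x_0,t_0)\in Q_{2/3}^{\ast}$, set $\beta_0=\beta(x_0,t_0)$ and $\Theta=\|u\|_{L^{\infty}(Q_1^{+})}+\|f\|_{L^{p}(Q_1^{+})}$, and choose $\alpha$ positive but smaller than $\overline\alpha$, than the exponent $\tilde\alpha$ of Lemma \ref{parobc1a}, and than the threshold in $p$ produced by H\"older's inequality in the parabolic ABP bound. I will construct affine maps $\ell_k(x)=a_k+b_k\cdot(x-x_0)$ with $\beta_0\cdot b_k=0$, $|b_k|\le C\Theta$, and
\[
\sup_{Q_{\rho^k}^{+}(x_0,t_0)}|u-\ell_k|\le\Theta\rho^{k(1+\alpha)},\qquad |a_{k+1}-a_k|+\rho^k|b_{k+1}-b_k|\le C\Theta\rho^{k(1+\alpha)},
\]
for a small $\rho\in(0,1)$ to be fixed. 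In the inductive step put $v(y,s)=\Theta^{-1}\rho^{-k(1+\alpha)}(u-\ell_k)(x_0+\rho^ky,\,t_0+\rho^{2k}s)$, so $\|v\|_{L^\infty(Q_1^{+})}\le1$; a computation shows $v$ solves an oblique problem whose operator is uniformly elliptic with the same $\lambda,\Lambda$, convex in $X$, and, modulo a right-hand side of small $L^{n+1}(Q_1^{+})$-norm (collecting the rescaled $f$, which is small because $p>n+1$, and the contributions of $b_k$ and $\ell_k$, which are $O(\rho^{k(1-\alpha)})$), is $o(1)$-close — in the oscillation sense of $\psi$, with $L^{n+1}$-average controlled by $\epsilon_0$ — to the constant-coefficient model operator $F_0(X):=F(X,0,0,x_0,t_0)$, with lower-order coefficients $b\rho^k$ and $c\rho^{2k}$; its oblique vector $\tilde\beta(y,s)=\beta(x_0+\rho^ky,t_0+\rho^{2k}s)$ satisfies $|\tilde\beta-\beta_0|\le C\rho^{k\overline\alpha}$, producing a boundary defect of $L^\infty$-norm $\le C\rho^{k(\overline\alpha-\alpha)}$. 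An approximation lemma in the spirit of Lemma \ref{parobper}, built from Lemmas \ref{abptob}, \ref{parobhol} and \ref{parobc2a}, then gives $h$ with $F_0(D^2h)-h_t=0$ in $Q_{1/2}^{+}$, $\beta_0\cdot Dh=0$ on $Q_{1/2}^{\ast}$, $\|h\|_{L^\infty}\le1$, and $\|v-h\|_{L^\infty(Q_{1/2}^{+})}\le\eta$ with $\eta=\eta(\epsilon_0)\to0$. Since $F_0$ is a genuine uniformly elliptic convex operator with $F_0(0)=0$ and $\beta_0$ is a constant vector, Lemma \ref{parobc1a} gives $h\in C^{1,\tilde\alpha}(\overline{Q}_{1/4}^{+})$ with a universal bound; hence the first-order Taylor polynomial $\ell$ of $h$ at $(x_0,t_0)$ satisfies $\beta_0\cdot D\ell=0$ and $\sup_{Q_\rho^{+}}|h-\ell|\le C_0\rho^{1+\tilde\alpha}$. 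Choosing $\rho$ so that $C_0\rho^{\tilde\alpha-\alpha}\le\tfrac14$ and then $\epsilon_0$ so that $\eta\le\tfrac14\rho^{1+\alpha}$ yields $\sup_{Q_\rho^{+}}|v-\ell|\le\tfrac12\rho^{1+\alpha}$, which rescales to the $(k+1)$st step with the Lipschitz bound $|b_{k+1}|\le|b_k|+C\Theta\rho^{k\alpha}\le C\Theta$ preserved. Summing the telescoping estimates yields $u\in C^{1,\alpha}$ at $(x_0,t_0)$ with the quantitative control needed for \eqref{pake1}.

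The hard part is the circularity: the lower-order term $b|Du|+c|u|$ in the equation involves $Du$, which is exactly what we are trying to show is H\"older continuous, and in the inductive step it enters through the gradient $b_k$ of the current affine approximant. This is resolved by carrying the Lipschitz bound $|b_k|\le C\Theta$ through the iteration (it follows from the telescoping $|b_{k+1}-b_k|\lesssim\Theta\rho^{k\alpha}$), together with the crucial fact that after the parabolic rescaling the relevant coefficients are $b\rho^k$ and $c\rho^{2k}$, which vanish, so that the frozen problem genuinely converges to the homogeneous constant-coefficient model to which Lemma \ref{parobc1a} applies. A secondary source of constraints is the boundary defect coming from $\beta$ being merely $C^{0,\overline\alpha}$: after rescaling it is of order $\rho^{k(\overline\alpha-\alpha)}$, forcing $\alpha<\overline\alpha$, which is why the oblique-model exponent $\overline\alpha=\overline\alpha(n,\lambda,\Lambda,\delta_0)$ enters the hypothesis, while the role of $p$ is only to make the rescaled $f$ decay, which pins $\alpha$ below a further $p$-dependent threshold. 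The remaining ingredients — the precise scaling bookkeeping, the construction of the approximation lemma (including the corner where $Q^{\ast}$ meets the parabolic boundary, handled as in the proofs of Lemmas \ref{gloho} and \ref{parobper}), and the covering — are routine and parallel to Section 4 and the elliptic treatment in \cite{MR4046185}.
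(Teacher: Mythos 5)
Your iteration scheme follows the same broad lines as the paper's — approximation at dyadic scales by a frozen constant-coefficient model, the $C^{1,\alpha}$-estimate of Lemma~\ref{parobc1a} for the model, and a telescoping comparison of affine pieces — and the variant you introduce (freezing the oblique direction at $\beta_0=\beta(x_0,t_0)$ and keeping $\beta_0\cdot b_k=0$, rather than the paper's running condition (iv) on $\|\beta_k\cdot b_{k,s}\|_{C^{0,\overline\alpha}}$) is tenable, provided you observe, as you implicitly do, that the boundary defect after freezing is $(\tilde\beta-\beta_0)\cdot Dh$ and can be bounded by the $C^{1,\overline\alpha}$-norm of the \emph{model} solution $h$ rather than by $Dv$, which is not a priori controlled.

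The genuine gap is the assertion that the rescaled source term ``is small because $p>n+1$.'' After setting $v(y,s)=\Theta^{-1}\rho^{-k(1+\alpha)}(u-\ell_k)(x_0+\rho^ky,t_0+\rho^{2k}s)$, the equation for $v$ carries the factor $\rho^{k(1-\alpha)}$ on the right-hand side, and a change of variables together with H\"older's inequality gives
\begin{align*}
\|\,\rho^{k(1-\alpha)}\Theta^{-1}f(\rho^k\cdot+x_0,\rho^{2k}\cdot+t_0)\,\|_{L^{n+1}(Q_1^+)}
\;\le\; C\,\rho^{k\left(1-\alpha-\frac{n+2}{p}\right)}\,\Theta^{-1}\|f\|_{L^p(Q_1^+)}.
\end{align*}
For this to remain below a fixed $\epsilon_0$ uniformly in $k$, you need $1-\alpha-\frac{n+2}{p}\ge 0$, and hence $p>n+2$; in the range $n+1<p\le n+2$ the exponent is nonpositive, the rescaled $f$ does not decay, and the approximation step fails for large $k$. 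The paper resolves exactly this by normalizing not by a global constant $\Theta$ but by the pointwise quantity
\begin{align*}
K(y,s)=\|u\|_{L^\infty(Q_d(y,s)\cap Q_1^+)}
+\epsilon_0^{-1}\sup_{r\le d}\;r^{1-\alpha}\Bigl(r^{-(n+2)}\int_{Q_r(y,s)\cap Q_1^+}|f|^{p'}\Bigr)^{1/p'}
\end{align*}
with a fixed $p'\in(n+1,p)$: by construction this makes the dyadically rescaled source term uniformly $\le C\epsilon_0$ in $L^{p'}\supset L^{n+1}$ at every scale, regardless of $p$. The cost is that $K(y,s)$ is a function, controlled in $L^q$ via the Hardy--Littlewood maximal function of $|f|^{p'}$, and for $n+1<p\le n+2$ one then has to integrate the resulting pointwise estimates in $(y,s)$, which is precisely the final difference-quotient argument the paper runs. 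Without some version of this local normalizer, your outline establishes \eqref{pake1} only for $p>n+2$, and even there it needs an extra initial rescaling (by the paper's parameter $\sigma$) so that the $k=0$ source term is already $\le\epsilon_0$ rather than merely $\le 1$.

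A secondary point you compress too quickly is the treatment of points with $0<y_n<\sigma/2$: you declare the interior case ``analogous and simpler,'' but the pointwise $C^{1,\alpha}$ estimate at exact boundary points must first be propagated to nearby interior points by applying it at the boundary projection and then invoking a scaled interior estimate on $u$ minus the resulting affine function. The paper's $\nu=y_n/\sigma$ device and the slanted cylinders $Q_r^\nu$ are designed to carry the whole near-boundary regime in a single iteration; if you omit that, the near-boundary/interior patching should be spelled out, or at least flagged.
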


We now introduce a useful building block in this section.
One can find its proof in \cite[Theorem 6.1]{MR1789919}.

\begin{proposition} \label{paobappr}
For $k \in \mathbb{N}$, let $ \Omega_{k} \subset \Omega_{k+1} $ be an increasing sequence of domains in $\mathbb{R}^{n} \times \mathbb{R}$ and $\Omega : = \cup_{k \ge 1} \Omega_{k} $.
Let  $p>n+1$ and $F, F_{k}$ be continuous and measurable in $x$ and $t$, and satisfy structure condition \eqref{paob_sc}.
Assume that $f \in L^{p}(\Omega ) $, $ f_{k} \in L^{p}(\Omega_{k} )$ and that $ u_{k } \in C(\Omega_{k} )$ are viscosity subsolutions (supersolutions, respectively) of $$F_{k}(D^{2}u_{k}, Du_{k}, u_{k}, x, t)- (u_{k})_{t} = f_{k} \quad \textrm{in} \ \Omega_{k} .$$
Suppose that $u_{k} \to u$ locally uniformly in $\Omega  $ and that for any cylinders $Q_{r}(x_{0},t_{0}) \subset \Omega  $ and $ \varphi \in C^{2}(Q_{r}(x_{0},t_{0}))$,
\begin{align} \label{fccon}  ||(s-s_{k})^{+} ||_{L^{p}(Q_{r}(x_{0},t_{0}))} \to 0 \qquad  \big( ||(s-s_{k})^{-} ||_{L^{p}(Q_{r}(x_{0},t_{0}))} \to 0   \big)
\end{align}
where $$ s(x,t) = F(D^{2}\varphi, D \varphi, u, x,t) -f(x,t) ,$$  $$ s_{k}(x,t) = F(D^{2}\varphi_{k}, D \varphi_{k}, u_{k}, x,t) -f_{k}(x,t) .$$
Then $u$ is a viscosity subsolution (supersolution) of $$  F(D^{2}u, Du, u, x,t)-u_{t}=f \ \textrm{ in} \  \Omega  .$$
\end{proposition}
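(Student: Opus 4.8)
Since the datum $f\in L^{p}$ is merely measurable, the conclusion has to be read in the $L^{p}$-viscosity sense, i.e. testing against $W^{2,p}$ (equivalently, by the consistency results of $L^{p}$-viscosity theory, $C^{2}$) functions with the differential inequality interpreted as a one-sided essential limit. It suffices to treat the subsolution case: the supersolution case follows by applying it to $-u_{k}$, which are $L^{p}$-viscosity subsolutions of the equations with operators $\widehat{F}_{k}(X,q,r,z):=-F_{k}(-X,-q,-r,z)$ and right-hand sides $-f_{k}$; the flip $(X,q,r)\mapsto-(X,q,r)$ preserves \eqref{paob_sc} with the same $\lambda,\Lambda,b,c$ and interchanges the two one-sided conditions in \eqref{fccon}. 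Finally, since every parabolic cylinder $Q\subset\subset\Omega$ lies in some $\Omega_{k}$, all the work below can be done inside such a $Q$ for $k$ large, so $\Omega=\bigcup_{k}\Omega_{k}$ serves only to localize.

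The plan is to argue by contradiction. Suppose $u$ is not an $L^{p}$-viscosity subsolution of $F(D^{2}u,Du,u,x,t)-u_{t}=f$ in $\Omega$. Then there are a cylinder $Q=Q_{\rho}(\hat{x},\hat{t})\subset\subset\Omega$, a test function $\varphi\in C^{2}(\overline{Q})$ and a number $m>0$ such that $u-\varphi$ attains a strict maximum over $\overline{Q}$ at the parabolically interior point $(\hat{x},\hat{t})$ and, after normalizing so that $v:=u-\varphi\le0=v(\hat{x},\hat{t})$ on $\overline{Q}$,
$$ s(z):=F\big(D^{2}\varphi(z),D\varphi(z),u(z),z\big)-f(z)\ \le\ \varphi_{t}(z)-m\qquad\text{for a.e. }z\in Q. $$
One first reads the negation as an essential-limit inequality at the touching point, which gives the a.e. bound on a small cylinder with the fixed value $u(\hat{x},\hat{t})$ in the $u$-slot of $F$; then one replaces $u(\hat{x},\hat{t})$ by $u(z)$ using the continuity of $u$, the $c$-Lipschitz dependence of $F$ on the $u$-slot supplied by \eqref{paob_sc}, and a further shrinking of $Q$; finally one perturbs $\varphi$ by a $C^{2}$-small parabolic bump to render the maximum strict, which costs only an arbitrarily small amount in the bound.

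Next I would convert this into a statement about the extremal operator and pass to the limit. Writing $v_{k}:=u_{k}-\varphi$ and using the upper half of \eqref{paob_sc} for $F_{k}$ (with $N=D^{2}\varphi$ and equal $u$-slots), the standard ``subtract a test function'' computation shows that $v_{k}$ is an $L^{p}$-viscosity subsolution of $\mathcal{M}^{+}(\lambda,\Lambda,D^{2}v_{k})+b|Dv_{k}|-(v_{k})_{t}=\varphi_{t}-s_{k}$ in $Q$, where $s_{k}$ is the quantity in \eqref{fccon} built from the $k$-th ingredients; that is, $v_{k}\in\underline{S}(\lambda,\Lambda,b,\varphi_{t}-s_{k})$ in $Q$. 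By \eqref{fccon} (the relevant one-sided $L^{p}$-smallness of the difference between the $k$-th and the limiting ingredients), together with $\|u_{k}-u\|_{L^{\infty}(Q)}\to0$ and the $c$-Lipschitz dependence on the $u$-slot, there is $\eta_{k}\ge0$ with $\|\eta_{k}\|_{L^{p}(Q)}\to0$ and $\varphi_{t}-s_{k}\ge m-\eta_{k}$ a.e. in $Q$; hence $v_{k}\in\underline{S}(\lambda,\Lambda,b,m-\eta_{k})$ in $Q$. Since $v_{k}\to v$ uniformly on $\overline{Q}$ and $m-\eta_{k}\to m$ in $L^{n+1}(Q)$ (because $p>n+1$), the closedness of the Pucci class $\underline{S}(\lambda,\Lambda,b,\cdot)$ under locally uniform convergence with $L^{n+1}$-convergent right-hand sides --- a basic ingredient of the parabolic ABP/Krylov--Safonov theory, cf. \cite{MR1135923} --- yields $v\in\underline{S}(\lambda,\Lambda,b,m)$ in $Q$, i.e. $\mathcal{M}^{+}(\lambda,\Lambda,D^{2}v)+b|Dv|-v_{t}\ge m$ in the viscosity sense. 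But $v\le0=v(\hat{x},\hat{t})$ on $\overline{Q}$ with a strict maximum at the parabolically interior point $(\hat{x},\hat{t})$, so the constant function $0$ touches $v$ from above there, and the subsolution inequality just obtained forces $0\ge m>0$, a contradiction. Hence $u$ is an $L^{p}$-viscosity subsolution of $F(D^{2}u,Du,u,x,t)-u_{t}=f$, and the supersolution case follows as indicated.

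The point I expect to be delicate is precisely the passage from the $L^{p}$-smallness in \eqref{fccon} to the viscosity bound $v\in\underline{S}(\lambda,\Lambda,b,m)$: this is \emph{not} a pointwise ``a.e.'' transfer --- the one-sided $L^{p}$-convergence leaves, for each fixed $k$, an exceptional set of small but positive measure --- and it genuinely rests on invoking the closedness of the extremal class, which encapsulates the parabolic ABP and measure estimates. The remaining care is parabolic bookkeeping: maxima are taken over the parabolic past, and the touching point sits on the top lid of $Q$, which is parabolically interior, so every viscosity condition used above is applied at a legitimate point. The detailed execution is carried out in \cite[Theorem 6.1]{MR1789919}.
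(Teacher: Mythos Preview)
The paper does not actually prove this proposition: it simply states that the proof can be found in \cite[Theorem~6.1]{MR1789919}. Your proposal sketches the standard argument from that reference --- contradiction via a strict-touching test function, reduction to the Pucci class $\underline{S}$ through the structure condition, and stability of that class under uniform convergence with $L^{n+1}$-convergent right-hand sides --- and you also close by citing the same source. So your approach is precisely the one the paper defers to, only spelled out in more detail; there is nothing to compare.
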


We first prove a compactness lemma for problems with oblique boundary data.
\begin{lemma}   \label{parobw1plem}
Let $n+1 <p<\infty$ and $ 0 \le \nu \le 1$. 
Assume that $F$ satisfies \eqref{paob_sc} with $F(0,0,0,x,t)\equiv0$ and $\beta \in C^{2}(Q_{2}^{\ast} )$ with $\beta \cdot \mathbf{n} \ge \delta_{0}$ for some $\delta_{0} > 0$.
Then, for every $\rho >0$,  $\varphi \in C(\partial_{p} Q_{1}^{\nu} (0', \nu,0)) $ with $ ||\varphi||_{L^{\infty}(\partial_{p} Q_{1}^{\nu}(0', \nu,0))} \le C_{1}$ for some $ C_{1}>0$ and $ g \in  C^{0,\alpha}(\overline{Q}_{2}^{\ast})$ with $0< \alpha <1 $ and $  ||g||_{ C^{0,\alpha}(\overline{Q}_{2}^{\ast})} \le C_{2}$ for some $C_{2}>0 $,
 there exists a positive number $\delta=\delta(\rho,n,\lambda,\Lambda,\delta_{0},p,C_{1},C_{2})<1$ such that if
$$ ||\psi((0,0), (\cdot,\cdot))||_{L^{p}(Q_{2}^{\nu}(0', \nu,0))}+ ||f||_{L^{p}(Q_{2}^{\nu}(0', \nu,0))}+ b+ c \le \delta, $$
then for any $u$ and $v$ solving
\begin{align*} 
\left\{ \begin{array}{ll}
F(D^{2}u, Du, u,  x, t) - u_{t}= f & \textrm{in $Q_{1}^{\nu}(0', \nu,0)$,}\\
u= \varphi  & \textrm{on $ \partial_{p} Q_{1}^{\nu}(0', \nu,0) \backslash  Q_{1}^{\ast}$,}\\
\beta \cdot Du = g & \textrm{on $Q_{1}^{\ast}$,}\\
\end{array} \right.
\end{align*}
and
\begin{align*} 
\left\{ \begin{array}{ll}
F(D^{2}v, 0, 0,  0,0)-v_{t} =0 & \textrm{in $Q_{\frac{3}{4}}^{\nu}(0', \nu,0)$,}\\
v = u & \textrm{on $ \partial_{p}  Q_{\frac{3}{4}}^{\nu}(0', \nu,0) \backslash  Q_{\frac{3}{4}}^{\ast}$,}\\
\beta \cdot Du = g & \textrm{on $Q_{\frac{3}{4}}^{\ast} $}\\
\end{array} \right.
\end{align*}in the viscosity sense, respectively,
we have $||u-v||_{L^{\infty}(Q_{\frac{3}{4}}^{\nu}(0', \nu,0) )} \le \rho $.
\end{lemma}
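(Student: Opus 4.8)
The plan is to argue by contradiction via a compactness (approximation) argument in the spirit of the fully nonlinear $W^{2,p}$ theory. Suppose the statement fails for some $\rho_{0}>0$. Then for every $k\in\mathbb{N}$ there are: an operator $F_{k}$ satisfying \eqref{paob_sc} with constants $b_{k},c_{k}\to 0$ and $F_{k}(0,0,0,x,t)\equiv 0$; a vector field $\beta_{k}\in C^{2}(Q_{2}^{\ast})$ with $\beta_{k}\cdot\mathbf{n}\ge\delta_{0}$; a parameter $\nu_{k}\in[0,1]$; data $\varphi_{k}$ with $\|\varphi_{k}\|_{L^{\infty}}\le C_{1}$ and $g_{k}\in C^{0,\alpha}(\overline{Q}_{2}^{\ast})$ with $\|g_{k}\|_{C^{0,\alpha}}\le C_{2}$; a right-hand side $f_{k}$ with
$$\|\psi_{F_{k}}((0,0),(\cdot,\cdot))\|_{L^{p}(Q_{2}^{\nu_{k}}(0',\nu_{k},0))}+\|f_{k}\|_{L^{p}(Q_{2}^{\nu_{k}}(0',\nu_{k},0))}+b_{k}+c_{k}\longrightarrow 0;$$
and solutions $u_{k},v_{k}$ of the two systems (with these data and $\nu=\nu_{k}$) satisfying $\|u_{k}-v_{k}\|_{L^{\infty}(Q_{3/4}^{\nu_{k}}(0',\nu_{k},0))}>\rho_{0}$ for all $k$.

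First I would extract uniform estimates. The ABP bound of Lemma \ref{abptob} (in the domains $Q^{\nu}$, for which only a routine modification is needed) gives $\|u_{k}\|_{L^{\infty}}\le C(n,\lambda,\Lambda,\delta_{0},C_{1},C_{2})$, hence the same bound for $\|v_{k}\|_{L^{\infty}}$. Since $u_{k}$ solves its equation in the \emph{full} domain $Q_{1}^{\nu_{k}}(0',\nu_{k},0)$, and the non-oblique portion $\partial_{p}Q_{3/4}^{\nu_{k}}(0',\nu_{k},0)\setminus Q_{3/4}^{\ast}$ lies at a fixed positive distance from $\partial_{p}Q_{1}^{\nu_{k}}(0',\nu_{k},0)\setminus Q_{1}^{\ast}$, a covering argument based on Lemma \ref{parobhol} together with interior H\"{o}lder estimates yields a uniform bound $\|u_{k}\|_{C^{0,\alpha_{0}}(\overline{Q}_{3/4}^{\nu_{k}}(0',\nu_{k},0))}\le C$ for some universal $\alpha_{0}\in(0,1)$. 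Then $v_{k}$ solves the homogeneous frozen-coefficient equation in $Q_{3/4}^{\nu_{k}}(0',\nu_{k},0)$ with oblique data $g_{k}$ and Dirichlet data $u_{k}|$, both having uniform H\"{o}lder norms, so the same ABP/barrier reasoning (now with the non-oblique boundary treated as a Dirichlet boundary) gives $\|v_{k}\|_{C^{0,\alpha_{1}}(\overline{Q}_{3/4}^{\nu_{k}}(0',\nu_{k},0))}\le C$ for some $\alpha_{1}\in(0,\alpha_{0})$.

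Next I would pass to the limit along a subsequence: $\nu_{k}\to\nu_{\infty}\in[0,1]$; $\beta_{k}\to\beta_{\infty}$ uniformly on $\overline{Q}_{2}^{\ast}$ (the $\beta_{k}$ being uniformly bounded in $C^{2}$, cf. Remark \ref{rembeta}); $g_{k}\to g_{\infty}$ uniformly; $F_{k}(\cdot,0,0,0,0)\to F_{\infty}$ locally uniformly on $S(n)$ (uniform ellipticity makes these equi-Lipschitz in $X$); and $u_{k}\to u_{\infty}$, $v_{k}\to v_{\infty}$ uniformly on $\overline{Q}_{3/4}^{\nu_{\infty}}(0',\nu_{\infty},0)$, the domains converging as $\nu_{k}\to\nu_{\infty}$. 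By the stability result Proposition \ref{paobappr} for the interior equation — whose hypothesis \eqref{fccon} is guaranteed by the $L^{p}$-smallness of $f_{k}$ and $\psi_{F_{k}}$ and the smallness of $b_{k},c_{k}$ — together with the standard stability of oblique boundary conditions in the viscosity sense under uniform convergence of $\beta_{k}$ and $g_{k}$, both $u_{\infty}$ and $v_{\infty}$ are viscosity solutions of
\begin{align*}
\left\{ \begin{array}{ll}
F_{\infty}(D^{2}w)-w_{t}=0 & \textrm{in $Q_{3/4}^{\nu_{\infty}}(0',\nu_{\infty},0)$,}\\
\beta_{\infty}\cdot Dw=g_{\infty} & \textrm{on $Q_{3/4}^{\ast}$,}\\
\end{array} \right.
\end{align*}
and $u_{\infty}=v_{\infty}$ on $\partial_{p}Q_{3/4}^{\nu_{\infty}}(0',\nu_{\infty},0)\setminus Q_{3/4}^{\ast}$, since $v_{k}=u_{k}$ there and the convergence is uniform up to that boundary portion.

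Finally, Lemma \ref{cm11} gives $u_{\infty}-v_{\infty}\in S(\lambda/n,\Lambda,0)$ with $\beta_{\infty}\cdot D(u_{\infty}-v_{\infty})=0$ on $Q_{3/4}^{\ast}$, and $u_{\infty}-v_{\infty}=0$ on the remaining part of the parabolic boundary; the ABP estimate of Lemma \ref{abptob} then forces $u_{\infty}\equiv v_{\infty}$ in $Q_{3/4}^{\nu_{\infty}}(0',\nu_{\infty},0)$, contradicting $\|u_{\infty}-v_{\infty}\|_{L^{\infty}(Q_{3/4}^{\nu_{\infty}}(0',\nu_{\infty},0))}\ge\rho_{0}$ inherited from $\|u_{k}-v_{k}\|_{L^{\infty}}>\rho_{0}$. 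I expect the main obstacle to be the compactness bookkeeping of the two middle steps: one must (i) promote the interior H\"{o}lder control of $u_{k}$ to a \emph{uniform} H\"{o}lder bound up to the Dirichlet part of $\partial_{p}Q_{3/4}^{\nu_{k}}$ and transfer it to $v_{k}$, so that the limits genuinely agree there; and (ii) verify that the oblique condition survives the limit, i.e. that $\beta_{k}\to\beta_{\infty}$ and $g_{k}\to g_{\infty}$ strongly enough for the notion of viscosity solution on $Q^{\ast}$ to be stable. The varying domains $Q^{\nu_{k}}$ and the operators $F_{k}$ are handled by routine Arzel\`{a}--Ascoli arguments.
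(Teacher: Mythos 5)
Your overall strategy is the same compactness-by-contradiction argument as the paper, and the first half (ABP bound for $u_k$, interior H\"older bound via Lemma~\ref{parobhol} on the compactly contained subcylinder, Arzel\`a--Ascoli, and Proposition~\ref{paobappr} to identify $u_\infty$ as a solution of the limiting model problem) matches the paper's step by step. The divergence is in what you do with $v_k$, and there is a genuine gap there.

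You pass $v_k$ to a limit $v_\infty$ and then invoke uniqueness (ABP for $u_\infty-v_\infty$). To extract such a limit with $v_\infty=u_\infty$ on $\partial_p Q_{3/4}^{\nu}\setminus Q_{3/4}^{\ast}$, you need a uniform modulus of continuity for $v_k$ \emph{up to the Dirichlet boundary} of the cylinder $Q_{3/4}^{\nu_k}$, which you assert via ``the same ABP/barrier reasoning.'' But none of the paper's tools provides that directly: Lemma~\ref{parobhol} is strictly interior, and the global H\"older estimate Lemma~\ref{gloho} is proved only in the curved domain $V_{1,h_0}^{+}$ precisely because the barrier construction there uses the geometric compatibility condition \eqref{parconcap} between $\beta$ and the inward normal of the curved wall. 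In the cylinder $Q_{3/4}^{\nu}$, the oblique boundary meets the flat lateral Dirichlet wall at a right-angle corner, where the barrier of Lemma~\ref{gloho} is not applicable as stated; a separate construction would be needed and nothing in the paper supplies it. The paper sidesteps this entirely: it does \emph{not} pass $v_k$ to a limit. Instead it forms $w_{k_i}=u_\infty-v_{k_i}$, observes via Lemma~\ref{cm11} that $w_{k_i}\in S(\lambda/n,\Lambda,0)$ with Dirichlet data $u_\infty-u_{k_i}$ and oblique data $g_\infty-g_{k_i}$, both of which tend to zero \emph{in $L^\infty$} (no H\"older modulus required), and then applies the ABP estimate Lemma~\ref{abptob} directly to conclude $\|w_{k_i}\|_{L^\infty}\to 0$. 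This only needs the $L^\infty$ convergence of the boundary data of $w_{k_i}$, which you already have from the uniform convergence $u_{k_i}\to u_\infty$ on the closed smaller cylinder. So the final step of your proposal should be replaced: do not extract a limit of $v_k$; apply ABP to $u_\infty - v_{k_i}$ directly. As a minor point, you also let $\beta$ vary along the sequence ($\beta_k$), whereas in the lemma $\beta$ is fixed; this is harmless but not needed here.
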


\begin{proof} 
Assume that there is a number $\rho_{0} >0$ such that
if $u_{k}$ and $v_{k}$ solve
\begin{align} 
\left\{ \begin{array}{ll}
F_{k}(D^{2}u_{k}, Du_{k}, u_{k},  x, t) - (u_{k})_{t}= f_{k} & \textrm{in $Q_{1}^{\nu_{k}}(0', \nu_{k},0)$,}\\
u_{k}= \varphi_{k}  & \textrm{on $ \partial_{p} Q_{1}^{\nu_{k}}(0', \nu_{k},0) \backslash  Q_{1}^{\ast}$,}\\
\beta \cdot Du_{k} = g_{k} & \textrm{on $Q_{1}^{\ast}$,}\\
\end{array} \right.
\end{align}
and
\begin{align} 
\left\{ \begin{array}{ll}
F_{k}(D^{2}v_{k}, 0, 0, 0, 0) - (v_{k})_{t}=0 & \textrm{in $Q_{\frac{3}{4}}^{\nu_{k}}(0', \nu_{k},0)$,}\\
v_{k} = u_{k} & \textrm{on $  \partial_{p}  Q_{\frac{3}{4}}^{\nu_{k}} (0', \nu_{k},0) \backslash  Q_{\frac{3}{4}}^{\ast} $,}\\
\beta \cdot Dv_{k} = g_{k} & \textrm{on $Q_{\frac{3}{4}}^{\ast}$}\\
\end{array} \right.
\end{align} in the viscosity sense, respectively,
then $||u_{k}-v_{k}||_{L^{\infty}(Q_{\frac{3}{4}}^{\nu_{k}}(0', \nu_{k},0))}    >  \rho_{0} $
 for any $F_{k}, f_{k}$, $  b_{k}, c_{k}, \psi_{F_{k}}$
with
$$  ||\psi_{F_{k}}((0,0), (\cdot,\cdot))||_{L^{p}(Q_{2}^{\nu_{k}}(0', \nu_{k},0))}, ||f_{k}||_{L^{p}(Q_{2}^{\nu_{k}}(0', \nu_{k},0))}, b_{k}, c_{k} \le  \delta_{k} \to 0  $$
as $k \to \infty$.
Furthermore, we also assume that $\varphi_{k} \in C(\partial_{p} Q_{1}(0', \nu_{k},0))$ with $ ||\varphi_{k}||_{L^{\infty}(\partial_{p} Q_{1}(0', \nu_{k},0))} \le C_{1}$
and  $g_{k} \in  C^{0,\alpha} (  \overline{Q}_{2}^{\ast} )$ with
$  ||g_{k}||_{ C^{0,\alpha}(\overline{Q}_{2}^{\ast} )} \le C_{2}$ for each $k$,
 respectively.  

From the structure condition \eqref{paob_sc}, 
we can find a subsequence $F_{k_{i}}$ and a function $F_{\infty}$ so that
$ F_{k_{i}}(\cdot, \cdot, \cdot, 0,0) $ converges uniformly to $F_{\infty}(\cdot)$ 
 on compact subsets of $S(n) \times \mathbb{R}^{n} \times \mathbb{R} $
by using Arzel\'{a}-Ascoli theorem.
Then for any $  \delta_{1} \in(0, 1)$ and sufficiently large $k$, it follows from  Lemma \ref{abptob} that 
\begin{align*}& ||u_{k}||_{L^{\infty}(Q_{1}^{\nu_{k}} )} 
\le
||\varphi_{k}||_{L^{\infty}( \partial_{p} Q_{1}^{\nu_{k} }(0', \nu_{k},0)\backslash  Q_{1}^{\ast})}
\\ &  +C(n, \lambda, \Lambda, \delta_{0})(||f_{k}||_{L^{n+1}(Q_{1}^{\nu_{k}}(0', \nu_{k},0))}\hspace{-0.2em}+\hspace{-0.2em}||g_{k}||_{L^{\infty}(Q_{1}^{\ast})}
\hspace{-0.2em}+\hspace{-0.2em} c_{F_{k}}||u_{k}||_{L^{\infty}(Q_{1}^{\nu_{k}}(0', \nu_{k},0) )} )
\end{align*}
and this implies
\begin{align*}
||u_{k}||_{L^{\infty}(Q_{1}^{\nu_{k}} (0', \nu_{k},0))} \le C(C_{1},C_{2},n, \lambda, \Lambda, \delta_{0} ).
\end{align*} 
Moreover, applying Lemma \ref{parobhol} to $u_{k}$, we have 
\begin{align}  \begin{split} \label{paobw1pholest} &||u_{k}||_{C^{0,\alpha_{1}}( Q_{1, \delta_{1}}^{\nu_{k}}(0', \nu_{k},0))} \\ & \le  C (||u_{k}||_{L^{\infty}(Q_{1}^{\nu_{k}}(0', \nu_{k},0))}+||f_{k}||_{L^{n+1}(Q_{1}^{\nu_{k}}(0', \nu_{k},0))}+||g_{k}||_{L^{\infty}( Q_{1}^{\ast})}) \delta_{1}^{-\alpha_{1}} \\ &   \le  C( C_{1}, C_{2}, n, \lambda, \Lambda, \delta_{0} ) \delta_{1}^{-\alpha_{1}}, \end{split}
\end{align}
where   $\alpha_{1} \in (0,1)$ only depends on $n, \lambda, \Lambda$ and $ \delta_{0}$.
Then we obtain 
\begin{align}  \label{pow1phe2}
||u_{k}||_{C^{0,\alpha_{1}}( Q_{\frac{7}{8}}^{\nu_{k}}(0', \nu_{k},0))}   \le  C( C_{1}, C_{2}, n, \lambda, \Lambda, \delta_{0} )  
\end{align}
from \eqref{paobw1pholest}. 

Now assume that there is a subsequence $ \{ \nu_{k_{i}} \} \subset \{ \nu_{k} \}$ and a number $0 \le \nu_{\infty} \le 1$
such that $ \nu_{k_{i}} \to \nu_{\infty}$ as $ i \to \infty $.
It is sufficient to consider the case of monotone subsequences.
When $\{\nu_{k_{i}} \} $ is decreasing, then $Q_{7/8}^{\nu_{\infty}}(0', \nu_{\infty},0) \subset Q_{7/8}^{\nu_{k}}(0', \nu_{k},0)$ for every $i$.
Thus we can observe that there is a function $ u_{\infty}$ such that 
$u_{k_{i}}$ converges uniformly to $u_{\infty}$ on $Q_{7/8}^{\nu_{\infty}}(0', \nu_{\infty},0)$
by using Arzel\'{a}-Ascoli theorem. 
For increasing subsequences, we can see that $Q_{7/8}^{\nu_{\infty}}(0', \nu_{\infty},0) \subset Q_{15/16}^{\nu_{k}}(0', \nu_{k},0) $ 
for sufficiently large $k$.
Then we can also deduce the uniform convergence for increasing subsequences.
Therefore, there exists a subsequence $\{ u_{k_{i}} \} \subset \{ u_{k} \}$ and a function $u_{\infty}$ with
$ u_{k_{i}}$ converging uniformly  to $ u_{\infty}$ 
in $  Q_{\frac{7}{8}}^{\nu_{\infty}} (0', \nu_{\infty},0) $.

Take a test function $\phi \in C^{2}(\overline{Q}_{\frac{7}{8}}^{\nu_{\infty}})$. Then we can observe that
\begin{align*}
| F_{k_{i}} (D^{2}\phi, D\phi,& u_{k_{i}}, x,t) - f_{k_{i}}(x,t) - F_{\infty} (D^{2} \phi, 0,0,0,0)| \\
& \le  c_{k_{i}}C(C_{1}) + b_{k_{i}} |D\phi| + \psi_{F_{k_{i}}}((0,0),(x,t))|D^{2}\phi| \\ & \qquad + |f_{k_{i}}|+|( F_{k_{i}}-F_{\infty})(D^{2} \phi, 0,0,0,0)|
\end{align*}
and this implies
$$ \lim_{i \to \infty}|| F_{k_{i}} (D^{2}\phi, D\phi, u_{k_{i}}, x,t) - f_{k_{i}}(x,t) - F_{\infty} (D^{2} \phi, 0,0,0,0)||_{L^{p}(Q_{r}(x_{0},t_{0}))} = 0 $$ 
for any $Q_{r}(x_{0},t_{0}) \subset Q_{\frac{7}{8}}^{\nu_{\infty}} (0', \nu_{\infty},0)$. 
Since $\{ g_{k} \} \subset  C^{0,\alpha}(\overline{Q}_{1}^{\ast}) $ are uniformly bounded and equicontinuous on $Q_{1}^{\ast}$,
we can find a function $ g_{\infty} \in  C^{0,\alpha}(Q_{1}^{\ast})$ by Arzel\'{a}-Ascoli theorem.
Thus by Proposition \ref{paobappr} and \cite[Proposition 31]{cm2019oblique}, we have
\begin{align} 
\left\{ \begin{array}{ll}
F_{\infty}(D^{2}u_{\infty}, 0, 0,0, 0) - (u_{\infty})_{t} = 0 & \textrm{in $Q_{\frac{7}{8}}^{\nu_{\infty}}(0', \nu_{\infty},0)$,}\\
 \beta \cdot Du_{\infty} = g_{\infty} & \textrm{on $  Q_{\frac{7}{8}}^{\ast} $}\\
\end{array} \right.
\end{align}
in the viscosity sense. 
 
Set $w_{k_{i}}:=u_{\infty}-v_{k_{i}}$. Then we observe that
\begin{align} 
\left\{ \begin{array}{ll}
w_{k_{i}} \in S (\lambda/n, \Lambda, 0) & \textrm{in $Q_{ \frac{3}{4}}^{\nu_{\infty}}(0', \nu_{\infty},0)$,}\\
w_{k_{i}}=u_{\infty}-u_{k_{i}} & \textrm{on $ \partial_{p} Q_{ \frac{3}{4}}^{\nu_{\infty}}(0', \nu_{\infty},0) \backslash  Q_{\frac{3}{4}}^{\ast}  $,} \\
\beta \cdot Dw_{k_{i}} = g_{\infty} -g_{k_{i}} & \textrm{on $ Q_{\frac{3}{4}}^{\ast}$}\\
\end{array} \right. 
\end{align}
in the viscosity sense by means of Lemma \ref{cm11}.
Applying Lemma \ref{parobabpt} to $w_{k_{i}}$, we get 
\begin{align} \label{pokest}\begin{split}  ||w_{k_{i}}&||_{L^{\infty}(Q_{ \frac{3}{4}}^{\nu_{\infty}}(0', \nu_{\infty}))} \\ &
\le ||u_{\infty}-u_{k_{i}}||_{L^{\infty}( \partial_{p} Q_{\frac{3}{4}}^{\nu_{\infty}}(0', \nu_{\infty},0)  \backslash  Q_{\frac{3}{4}}^{\ast} ))}\\ & \qquad+ C(n,\lambda,\Lambda,\delta_{0}) ||g_{\infty}-g_{k_{i}}||_{L^{\infty}(Q_{\frac{3}{4}}^{\nu_{\infty}}(0',\nu_{\infty},0))}
\end{split}
\end{align} and this converges to zero
as $i \to \infty$.
This shows that $v_{k_{i}} $ converges uniformly to $u_{\infty}$ on $ Q_{ \frac{3}{4}}^{\nu_{\infty}}(0',\nu_{\infty},0) $. 
But it is a contradiction since we already have assumed $||u-v||_{L^{\infty}(Q_{\frac{3}{4}}^{\nu_{\infty}}(0',\nu_{\infty},0))}    >  \rho_{0} $.
\end{proof}

\begin{proof}[Proof of Theorem \ref{paobw1p}]

Let $  p' \in (n+1 ,p)$, $(y,s) \in Q_{\frac{1}{2}}\cap \{ x_{n} \ge 0 \}$, $d = \min \{ {\frac{1}{2}, r_{0}} \}$,
and we choose $\sigma > 0$ such that
$$ \sigma \le \frac{d}{2}, \quad \sigma b \le \frac{\delta}{32MC(n)}, \quad \sigma^{2}c \le \frac{\delta}{32(M+1)C(n)}.  $$
Here, $\delta$ is the same as in Lemma \ref{parobw1plem}, $ C(n)$ is universal and $M$ will be chosen later. 

We first consider the case $ y_{n} < \sigma /2 $. Define
\begin{align*} K &= K(y,s) 
\\ &  = \hspace{-0.15em}  ||u||_{L^{\infty}(Q_{d}(y,s) \cap Q_{1}^{+})}  \hspace{-0.12em} + \hspace{-0.12em}  \frac{1}{\epsilon_{0}} \sup_{r \le d}  \bigg[r^{1-\alpha} \bigg(\hspace{-0.3em}  r^{-(n+2)}\hspace{-0.3em} \int_{Q_{r}(y,s) \cap Q_{1}^{+}} \hspace{-0.5em}   | f( x,t)|^{p'}  dxdt  \bigg)^{\frac{1}{p'}} \bigg],
\end{align*}
where $0< \alpha<1$ is  to be determined. 
Observe that for any $(y,s)$,
$$ K(y,s) \le  ||u||_{L^{\infty}(Q_{1}^{+})} + C(n, \epsilon_{0}) \big[ M(f^{p})(y,s) \big]^{\frac{1}{p}} < \infty .$$  
Let $$ \tilde{u}(x,t) = \frac{1}{K} u(\sigma x +y, \sigma^{2}t+s) ,$$ 
$$\tilde{f}(x,t) =  \frac{ \sigma^{2}}{K}  f(\sigma x +y, \sigma^{2}t+s),$$
$$ \tilde{\beta}(x,t) = \beta (\sigma x+y, \sigma^{2}t+s),$$  
$$ \tilde{F}(X,p,r,x,t) = \frac{\sigma^{2} }{K} F(K\sigma^{-2}X, K\sigma^{-1}p, Kr, \sigma x+y, \sigma^{2}t+s)$$
and $ \nu = y_{n}/\sigma $.
Then $\tilde{u}$ is a viscosity solution of 
\begin{align} 
\left\{ \begin{array}{ll}
\tilde{F}(D^{2}\tilde{u}, D\tilde{u}, \tilde{u},  x,t ) - \tilde{u}_{t} = \tilde{f} & \textrm{in $Q_{2}^{\nu}$,}\\
\tilde{\beta} \cdot D\tilde{u} = 0 & \textrm{on $ Q_{2} \cap \{x_{n}= -\nu \}$.}\\
\end{array} \right.
\end{align}
We can see that $\tilde{F}$ also satisfies \eqref{paob_sc} with $b_{\tilde{F}}=\sigma b $, $c_{\tilde{F}}= \sigma^{2} c$,
$$ r^{1-\alpha} \bigg( r^{-(n+2)} \int_{ Q_{r}^{\nu}}    | \tilde{f}( x,t)|^{p'} \ dx dt  \bigg)^{\frac{1}{p'}} \le \epsilon_{0} \sigma^{1+\alpha}$$ for every $ r \in (0,2)$ 
and
$$ ||\psi_{\tilde{F}} ((0,0), (\cdot,\cdot))||_{L^{p'}(Q_{1}^{\nu})} \le \delta $$ for small $\epsilon_{0}$. 

Now we establish $C^{1, \alpha}$-regularity.
To this end, we need to show that there are some universal constants $\mu, C_{1},C_{2} >0 $, $ 0 < \alpha, \alpha_{0} <1$ and linear functions $ l_{k,s}(x)=a_{k,s}+b_{k,s}\cdot x$ for each $ k \ge -1 $ such that 
\begin{itemize}
\item[(i)] $ || \tilde{u}-l_{k,s}||_{L^{\infty}(Q_{\mu^{k}}^{\nu})} \le \mu^{k(1+\alpha)} $. \\
\item[(ii)] $ |a_{k-1,s}-a_{k.s}|+\mu^{k-1} |b_{k-1.s}-b_{k,s}|  \le 2C_{2}\mu^{(k-1)(1+\alpha)} $.
\item[(iii)]  $\frac{|(\tilde{u}-l_{k,s})(\nu^{k}x_{1},\mu^{2k}t_{1})-(\tilde{u}-l_{k,s})(\nu^{k}x_{2},\mu^{2k}t_{2}) |}{(|x_{1}-x_{2}|+|t_{1}-t_{2}|^{\frac{1}{2}})^{\alpha_{0}}} \le 3C_{1}
\mu^{k(1+\alpha)}$
\\ for every $(x_{1},t_{1}),(x_{2},t_{2})\in Q_{1}\cap \{x_{n}\ge -\nu \}$.
\item[(iv)] $||\beta_{k}\cdot b_{k,s}||_{C^{0,\overline{\alpha}}(Q_{1} \cap \{ x_{n}=-\nu/\mu^{k}\})} \le \mu^{k\alpha}$.
\end{itemize}
Here, $C_{1}=C(n, \lambda, \Lambda,\delta_{0})$, $\alpha_{0}=\alpha(n, \lambda, \Lambda,\delta_{0})$
where $C, \alpha$ are constants as in Lemma \ref{parobhol} when it is applied to $\tilde{u} \in S^{\ast}(\lambda/n, \Lambda, 1 , \tilde{f})$ in $Q_{2}^{\nu}u,0)$,
and $\beta_{k}$ is to be defined later.  

Let $ l_{-1,s}= l_{0,s} =0$ and take $0<\alpha <\alpha_{0}$.
Consider a fixed number $\mu \le 1/4 $ such that 
\begin{align} \label{mucond} 8C_{2}(1+||\beta||_{C^{0,\overline{\alpha}}(\overline{Q}_{1}^{\ast})}) \mu^{1+\overline{\alpha}} \le \mu^{1+\alpha}
\end{align}  and
\begin{align} \label{defm}
M=4C_{1} \sum_{i=0}^{\infty} \bigg( \frac{1}{4} \bigg)^{i \alpha} \ge 4C_{1} \sum_{i=0}^{\infty} \mu^{i \alpha}.
\end{align} 

We use induction to prove that the above conditions are satisfied for every $k$.
It can be checked without difficulty when $k=0$.
Next we show that (i)-(iv) are still satisfied for $k+1$
under the assumption that these conditions hold for $k>0$. 

Let
$$ v_{k}(x,t) = \frac{(\tilde{u}-l_{k,s})(\mu^{k}x, \mu^{2k}t)}{\mu^{k(1+\alpha)}} .$$
Then $v_{k}$ is a viscosity solution of  
\begin{align} 
\left\{ \begin{array}{ll}
F_{k}(D^{2}v_{k}, Dv_{k}, v_{k},  x,t)- (v_{k})_{t} = f_{k}+g_{k} & \textrm{in $Q_{2}^{\frac{\nu}{\mu^{k}}}$,}\\
\beta_{k} \cdot Dv_{k} = -(\beta_{k} \cdot b_{k,s})/\mu^{k \alpha} & \textrm{on $Q_{2} \cap \{x_{n}= -\frac{\nu}{\mu^{k}} \} $,}\\
\end{array} \right.
\end{align}
where $$F_{k}(X,q,r,x,t)=\mu^{k(1-\alpha)}\tilde{F}(\mu^{k(\alpha-1)}X,\mu^{k\alpha}q, \mu^{k(\alpha+1)}r, \mu^{k}x , \mu^{2k}t),$$ 
\begin{align*} 
g_{k}(x,t)=&F_{k}(D^{2}v_{k},Dv_{k},v_{k},x,t) \\ & 
-F_{k}(D^{2}v_{k}, Dv_{k}+\mu^{-k \alpha}b_{F_{k}}, v_{k}+\mu^{-k(1+\alpha)}l_{k,s}(\mu^{k}x ),x ,t), 
\end{align*} 
$$ f_{k}(x,t)= \mu^{k(1-\alpha)} \tilde{f} (\mu^{k}x, \mu^{2k}t)$$
and
$$\beta_{k}(x,t)=\beta(\mu^{k}x,\mu^{2k}t) .$$ 
We remark that $\psi_{F_{k}}((0,0),(x,t)) = \psi_{\tilde{F}}(  (0,0),  (\mu^{k}x, \mu^{2k}t)) $ and
$ \tilde{F}$ satisfies \eqref{paob_sc} with $b_{F_{k}}= \mu^{k} b_{\tilde{F}}  $ and $c_{F_{k}}=\mu^{2k} c_{\tilde{F}} $. 

Now we can observe that
\begin{align*}
|g_{k}(x,t)| \le  b_{F_{k}} \cdot \mu^{-k \alpha} M + c_{F_{k}} \cdot \mu^{-k (\alpha+1)}M \le \mu^{k(1-\alpha)} \frac{\delta}{16}
\end{align*}
and this implies 
\begin{align*}
 ||f_{k}+g_{k}||_{L^{p'}(Q_{1}^{\frac{\nu}{\mu^{k}}})} &  \le
 ||f_{k}||_{L^{p'}(Q_{1}^{\frac{\nu}{\mu^{k}}})} +||g_{k}||_{L^{p'}(Q_{1}^{\frac{\nu}{\mu^{k}}})} 
\\ & \le \frac{\delta}{2} + \frac{\delta}{16} \mu^{k(1-\alpha)}
 \le \delta .
\end{align*} 

On the other hand, we see that $v_{k} \in S^{\ast} (\lambda/n, \Lambda,b_{F_{k}}, |f_{k}|+|g_{k}|+\mu^{2k}c_{\tilde{F}})$. Note that $b_{F_{k}} \le 1$ if $k$ is sufficiently large. 
Therefore by Lemma \ref{abptob}, 
\begin{align*}
& ||v_{k}||_{C^{0,\alpha_{0}}(Q_{1}^{\frac{\nu}{\mu^{k}}})} \\ &  \le  ||v_{k}||_{L^{\infty}(\partial_{p} Q_{1}^{\frac{\nu}{\mu^{k}}})}+ C(n, \lambda, \Lambda,  \delta_{0})( ||f_{k}||_{L^{n+1}(Q_{1}^{\frac{\nu}{\mu^{k}}})} \\ & \qquad  +||g_{k}||_{L^{n+1}(Q_{1}^{\frac{\nu}{\mu^{k}}})} +\mu^{2k}c_{\tilde{F}} +  \mu^{-k \alpha} ||\beta_{k} \cdot b_{k}||_{L^{\infty}(Q_{1} \cap \{ x_{n}=-\frac{\nu}{\mu^{k}} \})}) \\
& \le 1+ C(n, \lambda, \Lambda,  \delta_{0}) (  \delta + 1)
\\ & \le C(n, \lambda, \Lambda,  \delta_{0})
\end{align*}
for some $\alpha_{0}=\alpha_{0}(n, \lambda, \Lambda, \delta_{0}) \in (0,1)$. 
Note that we used $ ||v_{k}||_{L^{\infty}(Q_{1}^{\frac{\nu}{\mu^{k}}})} \le 1$ and $ ||\beta_{k}\cdot b_{k,s}||_{C^{0,\overline{\alpha}}(Q_{1} \cap \{ x_{n}=-\frac{\nu}{\mu^{k}} \})} \le \mu^{k\alpha}$ to obtain the last inequality. 

Now let $h \in C(\overline{Q}_{ \frac{7}{8}}^{\frac{\nu}{\mu^{k}}})$ be a solution of 
\begin{align} \label{appfh}
\left\{ \begin{array}{ll}
F_{k}(D^{2}h, 0, 0,0, 0) -h_{t} = 0 & \textrm{in $Q_{ \frac{7}{8}}^{\frac{\nu}{\mu^{k}}}$,}\\
h= v_{k}  & \textrm{on $ \partial_{p}Q_{ \frac{7}{8}}^{\frac{\nu}{\mu^{k}}}\cap \{ x_{n}>-\frac{\nu}{\mu^{k}} \}$,}\\
\beta_{k} \cdot Dh = -(\beta_{k} \cdot b_{k,s})/\mu^{k \alpha}   & \textrm{on $Q_{ \frac{7}{8}}\cap \{ x_{n}=-\frac{\nu}{\mu^{k}} \} $ }
\end{array} \right.
\end{align}
in the viscosity sense. 
Applying Lemma \ref{parobc1a} to $h$, we see that 
\begin{align} \label{po_addn}  ||h||_{C^{1,\overline{\alpha}}( \overline{Q}_{ \frac{3}{4}}^{\frac{\nu}{\mu^{k}}})} \le C_{2}(1+ \mu^{-k\alpha}||\beta_{k} \cdot b_{k,s}||_{C^{0,\overline{\alpha}}(Q_{ \frac{7}{8}}\cap \{ x_{n}=-\frac{\nu}{\mu^{k}} \})}) \le 2C_{2}
\end{align} for some $C_{2}=C_{2}(n, \lambda, \Lambda, \delta_{0} ,||\beta||_{C^{0,\overline{\alpha}}(Q_{2}^{\ast})})$.  
We also have
$$ ||v_{k}-h||_{L^{\infty}(Q_{\frac{3}{4}}^{\frac{\nu}{\mu^{k}}})} \le  \rho$$ 
by applying Lemma \ref{parobw1plem} to $v_{k}$ and $h$ with $\rho =4C_{2} \mu^{1+\overline{\alpha}} $.

Define $\overline{l}(x) = h(0,0)+ Dh(0,0)\cdot  x $.  
Then we can obtain 
\begin{align*}
 ||v_{k}-\overline{l} ||_{L^{\infty}(Q_{ 2\mu}^{+}  )} &  \le 
 ||v_{k}-h||_{L^{\infty}(Q_{ 2\mu}^{+})}+||h-\overline{l}||_{L^{\infty}(Q_{2\mu}^{+})} \\
 & \le 4C_{2} \mu^{1+\overline{\alpha}}+ \frac{1}{2}C_{2} (2\mu)^{1+\overline{\alpha}} \\
 & \le \mu^{1+ \alpha} 
\end{align*}
and this leads to 
$$ |\tilde{u}(x,t) -l_{k,s}(x) -\mu^{k(1+\alpha)} \overline{l} (\mu^{-k}x)| \le \mu^{(k+1)(1+\alpha)} $$
for any $ (x,t) \in Q_{ 2 \mu^{k+1}}^{\nu}$.
Therefore, we see that the first condition is satisfied if we set
$$l_{k+1,s}(x) = l_{k,s}(x)+ \mu^{k(1+\alpha)} \overline{l}(\mu^{-k}x).$$
Moreover, we can also check that  $a_{k+1,s}=a_{k,s}+ \mu^{k(1+\alpha)}h(0,0)$ and  $b_{k+1,s}=b_{k,s}+ \mu^{k\alpha}Dh(0,0)$ and this yields 
\begin{align*}
|a_{k,s}-a_{k+1,s}|+ \mu^{k}|b_{k,s}-b_{k+1,s}|& = \mu^{k(1+\alpha)}(|h(0,0)|+|Dh(0,0)|) 
\\ & \le \mu^{k(1+\alpha)} ||h||_{C^{1,\overline{\alpha}}(  Q_{ \frac{3}{4}}^{\frac{\nu}{\mu^{k}}}  )}
\\ & \le 2C_{2} \mu^{k(1+\alpha)}.
\end{align*}
Thus, the condition (ii) is also true.   

For (iv), we first observe that 
\begin{align*} (\beta_{k+1}\cdot b_{k+1,s})(x,t)=  \beta_{k}(\mu x,\mu^{2}t)\cdot (b_{k,s}+\mu^{k \alpha} Dh(0,0) )
\end{align*}
Since $\beta_{k} \cdot Dh = -(\beta_{k} \cdot b_{k,s})/\mu^{k \alpha}  $, we have
\begin{align*} (\beta_{k+1}\cdot b_{k+1,s})(x,t)=  \mu^{k\alpha}\beta_{k}(\mu x,\mu^{2}t)\cdot(Dh(0,0)-Dh(\mu x,\mu^{2}t)),
\end{align*}
and hence,
\begin{align*}&||\beta_{k+1}\cdot b_{k+1,s}||_{C^{0,\overline{\alpha}}(Q_{1}\cap \{ x_{n}=-\frac{\nu}{\mu^{k+1}} \})} \\& \le \mu^{k \alpha}
||\beta_{k}(\mu x,\mu^{2}t)\cdot(Dh(0,0)-Dh(\mu x,\mu^{2}t))||_{C^{0,\overline{\alpha}}(Q_{1}\cap \{ x_{n}=-\frac{\nu}{\mu^{k+1}} \})}.
\end{align*}
From \eqref{po_addn}, we can obtain
\begin{align*}
&||\beta_{k}(\mu x,\mu^{2}t)\cdot(Dh(0,0)-Dh(\mu x,\mu^{2}t))||_{C^{0,\overline{\alpha}}(Q_{1}\cap \{ x_{n}=-\frac{\nu}{\mu^{k+1}} \})}
\\ & \le (2\mu )^{\overline{\alpha}} \big\{ 2||h||_{C^{1,\overline{\alpha}}( \overline{Q}_{ \frac{3}{4}}^{\frac{\nu}{\mu^{k}}})}  + ||h||_{L^{\infty}( \overline{Q}_{ \frac{3}{4}}^{\frac{\nu}{\mu^{k}}})}||\beta ||_{C^{0,\overline{\alpha}}(Q_{1}^{\ast})}
 \big\}
 \\ & \le 4C_{2} (2+||\beta ||_{C^{0,\overline{\alpha}}(Q_{1}^{\ast})})\mu^{\overline{\alpha}}
 \\ & \le \mu^{\alpha}
\end{align*}
by using \eqref{mucond} and \eqref{po_addn}.
This yields that (iv) is true for $k+1$. 

Finally, we consider the condition (iii).
We first observe that
\begin{align} 
\left\{ \begin{array}{ll}
v_{k}-\tilde{l} \in S^{\ast}\big(\lambda/n, \Lambda, b_{k}, f_{k}+g_{k}+\frac{\delta}{8}\big) & \textrm{in $Q_{ 1}^{\frac{\nu}{\mu^{k}}}$,}\\
\beta_{k} \cdot D(v_{k}-\tilde{l}) = -\frac{\beta_{k} \cdot b_{k,s}}{\mu^{k \alpha}}-\beta \cdot Dh(0,0)   & \textrm{on $Q_{1}\cap \{ x_{n}=-\frac{\nu}{\mu^{k}} \} $}
\end{array} \right.
\end{align}
and
$$  -\frac{\beta_{k} \cdot b_{k,s}}{\mu^{k \alpha}}-\beta \cdot Dh(0,0) =\beta_{k}\cdot (Dh-Dh(0,0))$$
from the definition of $v_{k}$ and $\overline{l}$.
Then, by using Lemma \ref{parobhol}, we have
\begin{align*}
&||v_{k}-\tilde{l}||_{C^{0,\alpha_{0}}(Q_{\nu}^{\frac{\nu}{\mu^{k}}})} \\ & \le
C_{1} \mu^{-\alpha_{0}}(\mu^{1+\alpha}+2\delta\mu^{2-\frac{n+2}{p'}} +\mu ||\beta_{k}\cdot (Dh-Dh(0,0))||_{L^{\infty}(Q_{\mu}\cap \{ x_{n}=-\frac{\nu}{\mu^{k}} \} )} ).
\end{align*}
Now we choose sufficiently small $\delta$ with $2\delta \le \mu^{\alpha+\frac{n+2}{p'}-1}$.
We can also check that 
$$||\beta_{k}\cdot (Dh-Dh(0,0))||_{L^{\infty}(Q_{\mu}\cap \{ x_{n}=-\frac{\nu}{\mu^{k}} \} )}
\le \mu^{\overline{\alpha}}||h||_{C^{1,\overline{\alpha}}( \overline{Q}_{ \frac{3}{4}}^{\frac{\nu}{\mu^{k}}})} \le 2C_{2}\mu^{\overline{\alpha}}.$$
This leads to 
$$ ||v_{k}-\tilde{l}||_{C^{0,\alpha_{0}}(Q_{\nu}^{\frac{\nu}{\mu^{k}}})} \le C_{1} \mu^{-\alpha_{0}}(2\mu^{1+\alpha}+ 2C_{2}\mu^{1+\overline{\alpha}})\le 3C_{1}\mu^{1+\alpha-\alpha_{0}}. $$
Therefore, we can obtain
\begin{align*}
&|(\tilde{u}-l_{k+1,s})(\mu^{k+1}x_{1},\mu^{2(k+1)}t_{1})-(\tilde{u}-l_{k+1,s})(\mu^{k+1}x_{2},\mu^{2(k+1)}t_{2})|
\\ & \le 3C_{1} \mu^{(k+1)(1+\alpha)} (|x_{1}-x_{2}|+|t_{1}-t_{2}|^{\frac{1}{2}})^{\alpha_{0}}
\end{align*}
for any $(x_{1},t_{1}),(x_{2},t_{2})\in Q_{1}\cap \{x_{n}\ge -\nu \}$.

Hence, we can always find a linear function $l_{s}$ and a universal constant $C$ with
\begin{align} \label{pafdest}  |l_{s}(0,0)|,|Dl_{s}(0,0)| \le CK(y,s) 
\end{align} and
\begin{align} \label{palest}  ||u-l_{s}||_{L^{\infty}(Q_{r}(y,s)\cap Q_{1}^{+})} \le  Cr^{1+\alpha} K(y,s) 
\end{align}
for any $ (y,s) \in Q_{\frac{1}{2}}\cap \{ x_{n} \ge 0 \}$ with $y_{n} < \sigma /2 $ and sufficiently small $r>0$.

Next, in the case  $y_{n} \ge \sigma /2 $, we can refer to the interior $C^{1,\alpha}$-regularity 
in \cite[Lemma 7.4]{MR1789919}.
Thus, we get the estimate \eqref{pake1} in the case $ p > n+2$ with  $p'=n+2$ and $\alpha< 1-\frac{n+2}{p} $ since
\begin{align*}
K(y,s)\le ||u||_{L^{\infty}(Q_{d}(y,s) \cap Q_{1}^{+})}  +\epsilon_{0}^{-1}\sup_{r\le d}\big( r^{1+\alpha-\frac{n+2}{p}}||f||_{L^{p}(Q_{1}^{+})} \big).
\end{align*}

Besides, 
we also see that  \eqref{pafdest} and \eqref{palest} are also satisfied for almost every $(y,s) \in Q_{\frac{1}{2}}^{+}$ when $ n+1   < p \le n+2 $. 
Then we get
$$ \frac{|u(y+x,s+t)-u(y,s)|}{|x|+|t|^{\frac{1}{2}}} \le C  K(y,s)$$ for some $C>0$ and almost every $(y,s) \in Q_{\frac{1}{2}}^{+}$ and $(x,t) \in Q_{r} \backslash \{ (0,0) \}$ such that $(y+x,s+t) \in   Q_{1}^{+}$.
Write $$ I_{(x,t)}(y,s) := \frac{|u(y+x,s+t)-u(y,s)|}{|x|+|t|^{\frac{1}{2}}} .$$
It is straightforward to check that 
\begin{align*}
 ||I_{(x,t)}||_{L^{q}(Q_{\frac{1}{2}}^{+})}   \le C ||K(\cdot, \cdot)||_{L^{q}(Q_{\frac{1}{2}}^{+})}
 \le C(||u||_{L^{\infty}(Q_{1}^{+})}  + J)
\end{align*}
for any $q \in (p', (n+2)p'/[n+2-p'(1-\alpha)]) $, 
where
$$ J =  \bigg\{ \int_{Q_{\frac{1}{2}}^{+}} \hspace{-0.3em} \sup_{r \le d}  \bigg[r^{q(1-\alpha)} \bigg(  \hspace{-0.25em} r^{-(n+2)} \hspace{-0.5em} \int_{Q_{r} (y,s)\cap  Q_{1}^{+}} \hspace{-0.7em}    | f( x,t)|^{p'} \ dx dt \bigg)^{\frac{q}{p'}} \bigg] dy ds \bigg\}^{\frac{1}{q}} .$$
As in the proof of \cite[Lemma 7.4]{MR1789919}, we obtain $ J \le C||f||_{L^{p}(Q_{1}^{+})} $ for some  $C=C(n,p,p')$,
and then
\begin{align*}
\sup_{|(x,t)|<r}||I_{(x,t)}||_{L^{q}(Q_{\frac{1}{2}}^{+})} \le C (  ||u||_{L^{\infty}(Q_{1}^{+})}+||f||_{L^{p}(Q_{1}^{+})})
\end{align*}
for any $p' \le q < p^{\ast} $ with proper $ p' $ and $ \alpha$.
Then we can get the desired result.
\end{proof}

Thanks to Theorem \ref{paobw1p}, we get $W^{2,p}$-estimate for viscosity solutions of \eqref{paob_inq1} directly.
\begin{corollary}  \label{paob_flat}
Let $n+1 <p <\infty$ and $u $ be a viscosity solution of \eqref{paob_inq1}.
Then, under the assumption of Theorem \ref{paobw1p}, $ u \in W^{2,p}(Q_{1/4}^{+})$ and
$$ ||u||_{W^{2,p}(Q_{1/4}^{+})} \le C (||u||_{L^{\infty}(Q_{1}^{+})}+ ||f||_{L^{p}(Q_{1}^{+})})$$
for some $C= C(n, \lambda, \Lambda , b,c, p, ||\beta||_{C^{2}(\overline{Q}_{1}^{\ast})} )$.
\end{corollary}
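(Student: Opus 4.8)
The plan is to feed the gradient regularity of Theorem~\ref{paobw1p} into the Hessian estimate of Theorem~\ref{paob_kthm}, exactly as sketched in the paragraph preceding Theorem~\ref{paobw1p}. First I would apply Theorem~\ref{paobw1p} to $u$: we are in the setting of that theorem, so we obtain $u\in C^{1,\alpha}(\overline{Q}_{1/2}^{+})$ for some $\alpha=\alpha(n,p,\lambda,\Lambda,\delta_{0})\in(0,\overline\alpha)$ together with
$$\|u\|_{C^{1,\alpha}(\overline{Q}_{1/2}^{+})}\le C\big(\|u\|_{L^{\infty}(Q_{1}^{+})}+\|f\|_{L^{p}(Q_{1}^{+})}\big).$$
In particular $Du$ and $u$ are bounded continuous on $\overline{Q}_{1/2}^{+}$.

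Next I would reduce \eqref{paob_inq1} to an equation whose operator depends only on the Hessian and on $(x,t)$. Since $u\in C^{1}$, any $C^{2}$ test function $\varphi$ touching $u$ at a point satisfies $D\varphi=Du$ and $\varphi=u$ there; hence $u$ is a viscosity solution, in $Q_{1/2}^{+}$ and with the same oblique condition on $Q_{1/2}^{*}$, of
$$F(D^{2}u,0,0,x,t)-u_{t}=\tilde f,$$
where, by the structure condition \eqref{paob_sc}, $|\tilde f|\le|f|+b|Du|+c|u|$ pointwise. Thus $\tilde f\in L^{p}(Q_{1/2}^{+})\cap C(Q_{1/2}^{+})$ and, by the previous step,
$$\|\tilde f\|_{L^{p}(Q_{1/2}^{+})}\le\|f\|_{L^{p}(Q_{1}^{+})}+C\,\|u\|_{C^{1}(\overline{Q}_{1/2}^{+})}\le C\big(\|u\|_{L^{\infty}(Q_{1}^{+})}+\|f\|_{L^{p}(Q_{1}^{+})}\big).$$
The operator $(X,x,t)\mapsto F(X,0,0,x,t)$ is uniformly elliptic with $\lambda,\Lambda$, convex and continuous in $X$, continuous in $(x,t)$, and vanishes at $X=0$ because $F(0,0,0,x,t)=0$; its oscillation function is precisely the $\psi_{F}$ appearing in the hypothesis of Theorem~\ref{paobw1p}, whose smallness (in the exponent $p>n+1$) gives, by H\"{o}lder's inequality, the exponent-$(n+1)$ smallness required by Theorem~\ref{paob_kthm}. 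Together with the assumptions on $\beta$, all hypotheses of Theorem~\ref{paob_kthm} hold for this reduced problem.

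Finally, rescaling to match the normalization of Theorem~\ref{paob_kthm} (a parabolic dilation preserves the ellipticity constants, convexity, the oblique condition and the oscillation bound), Theorem~\ref{paob_kthm} applied on $Q_{1/2}^{+}$ gives $u\in W^{2,p}(Q_{1/4}^{+})$ with
$$\|u\|_{W^{2,p}(Q_{1/4}^{+})}\le C\big(\|u\|_{L^{\infty}(Q_{1/2}^{+})}+\|\tilde f\|_{L^{p}(Q_{1/2}^{+})}\big);$$
combining this with $\|u\|_{L^{\infty}(Q_{1/2}^{+})}\le\|u\|_{L^{\infty}(Q_{1}^{+})}$ and the bound on $\|\tilde f\|_{L^{p}(Q_{1/2}^{+})}$ yields the assertion, with $C=C(n,\lambda,\Lambda,b,c,p,\|\beta\|_{C^{2}(\overline{Q}_{1}^{*})})$. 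The step I expect to require the most care is the reduction in the second paragraph: one must verify, using only $u\in C^{1,\alpha}$ and \eqref{paob_sc}, that $\tilde f$ is a genuine $L^{p}\cap C$ right-hand side and that transferring the gradient and zeroth-order terms of $F$ to the right-hand side leaves a problem to which Theorem~\ref{paob_kthm} literally applies; in particular that neither the convexity of the operator nor the smallness of its oscillation is lost. The remaining points (scale invariance of the hypotheses, the elementary $L^{p}$ bound on $\tilde f$, and chaining the two estimates) are routine.
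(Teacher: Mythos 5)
Your proposal is correct and is essentially the approach the paper has in mind: the paper gives no explicit proof of the corollary, simply invoking Theorem \ref{paobw1p} ``directly,'' and the strategy it sketches in the paragraph preceding Theorem \ref{paobw1p} (estimate $|\tilde f|\le |f|+b|Du|+c|u|$ via the structure condition, control $\|Du\|_{L^\infty}$ by the $C^{1,\alpha}$ bound, then feed $\tilde f$ into Theorem \ref{paob_kthm}, noting H\"older's inequality converts the exponent-$p$ oscillation smallness into the exponent-$(n+1)$ smallness) is exactly what you carry out. The one genuinely delicate point you rightly flag --- that transferring the lower-order terms to the right-hand side yields, a priori, only a viscosity subsolution of $F(D^2u,0,0,x,t)-u_t=f+b|Du|+c|u|$ and a supersolution of $F(D^2u,0,0,x,t)-u_t=f-b|Du|-c|u|$ rather than a solution with a single continuous $\tilde f$ --- is also glossed over by the paper itself (it simply ``assumes'' such a $\tilde f$ exists); this is harmless here because the machinery behind Theorem \ref{paob_kthm} (in particular Corollary \ref{parobdist}) runs through the classes $S^{*}$, which only need the two one-sided inequalities.
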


\subsection{Global $W^{2,p}$-regularity for \eqref{ob_eqoripa}} 
We can establish the global $W^{2,p}$-regularity for \eqref{ob_eqoripa}  by using Corollary \ref{paob_flat}. 
\begin{theorem} \label{paob_main}
Let $ \Omega$ be a bounded $C^{3} $-domain with $ T>0$ and
$\mathbf{n}$ be the inward unit normal vector to $\partial \Omega$. 
Assume that $u$ is a viscosity solution of \eqref{ob_eqoripa},
 where $F(X,q,r,x,t)$ is convex in $X$, continuous in $x$ and $t$, and satisfies the structure condition \eqref{paob_sc} with $F(0,0,0,x,t)=0 $, 
$\beta \in C^{2}(\partial \Omega \times (0,T)) $ with $\beta \cdot \mathbf{n} \ge \delta_{0}$ for some $\delta_{0} > 0$ and $f \in L^{p}(  \Omega \times (0,T)) \cap C(  \Omega \times (0,T))$ for $ n +1 < p < \infty $.
Then there exists $\epsilon_{0}$ depending on $ n,  p,  \lambda, \Lambda, \delta_{0}$ and $ ||\beta||_{C^{2}(\partial \Omega \times (0,T))}$ such that if
$$\bigg(  \kint_{Q_{r}(x_{0},t_{0}) \cap (\Omega \times (0,T])}     \psi_{F}( (x_{0},t_{0}),(x,t))^{p} \ dx dt \bigg)^{1/p} \le \epsilon_{0} $$
for  any $(x_{0} ,t_{0})\in \Omega\times (0,T]$ and $0<r<r_{0}$, then $ u \in W^{2,p}( \Omega\times (0,T] )$ with the global $W^{2,p}$-estimate
$$ ||u||_{ W^{2,p}(\Omega\times (0,T))} \le C (||u||_{ L^{\infty}(\Omega\times (0,T) )}+||f||_{ L^{p}( \Omega\times (0,T) )})$$
for some $C$ depending only on $ n,  p,  \lambda, \Lambda,  \delta_{0},   b, c,r_{0}, ||\beta||_{C^{2}(\partial \Omega \times (0,T))},T$ and $ \diam(\Omega)$.
\end{theorem}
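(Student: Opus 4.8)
The plan is to reduce the global estimate to the flat--boundary estimate of Corollary~\ref{paob_flat} and to the interior $W^{2,p}$--estimates for viscosity solutions, and then to glue the resulting local estimates by a finite covering of $\overline{\Omega}\times[0,T]$, using the $C^{3}$--regularity of $\partial\Omega$ to straighten the lateral boundary near each of its points.

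Fix first a lateral boundary point $(x_{0},t_{0})$ with $x_{0}\in\partial\Omega$. Since $\partial\Omega\in C^{3}$, there is a $C^{3}$ diffeomorphism $\Phi$ of a fixed--size neighbourhood $U\ni x_{0}$ onto a neighbourhood of $0$, with $\Phi(x_{0})=0$, $\Phi(U\cap\partial\Omega)\subset\{y_{n}=0\}$, $\Phi(U\cap\Omega)\subset\{y_{n}>0\}$, and $\|\Phi\|_{C^{3}}+\|\Phi^{-1}\|_{C^{3}}$ bounded in terms of the $C^{3}$--character of $\partial\Omega$. With $y=\Phi(x)$ and $\tilde u(y,t)=u(\Phi^{-1}(y),t)$, the change of variables turns $D^{2}_{x}u$ into $(D\Phi)^{T}(D^{2}_{y}\tilde u)\,D\Phi$ plus a term linear in $D_{y}\tilde u$, and $D_{x}u$ into $(D\Phi)^{T}D_{y}\tilde u$, so \eqref{ob_eqoripa} transforms on $\Phi(U\cap\Omega)\times(0,T)$ into a problem of the form $\tilde F(D^{2}\tilde u,D\tilde u,\tilde u,y,t)-\tilde u_{t}=\tilde f$ in $\Phi(U\cap\Omega)\times(0,T)$ and $\tilde\beta\cdot D\tilde u=0$ on $\{y_{n}=0\}\times(0,T)$, with $\tilde f(y,t)=f(\Phi^{-1}(y),t)$, $\tilde\beta=D\Phi\,\beta$ (pushed forward), and $\tilde F(M,q,r,y,t)$ obtained by composing $F$ with the linear map $M\mapsto(D\Phi)^{T}M\,D\Phi$ (evaluated at $x=\Phi^{-1}(y)$) together with the induced lower--order substitutions. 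One then checks that $\tilde F$ is uniformly elliptic (constants depending on $\lambda,\Lambda,\|D\Phi\|_{\infty},\|(D\Phi)^{-1}\|_{\infty}$), convex in $M$ since $M\mapsto(D\Phi)^{T}M\,D\Phi$ is linear, satisfies the structure condition \eqref{paob_sc} with $b,c$ of the same type, and $\tilde F(0,0,0,\cdot)=0$; moreover $\tilde\beta\in C^{2}$ (as $D\Phi\in C^{2}$ and $\beta\in C^{2}$) and $\tilde\beta\cdot e_{n}\ge\delta_{0}'$ on $\{y_{n}=0\}$ for some $\delta_{0}'>0$ depending on $\delta_{0}$ and $\|\Phi\|_{C^{2}}$, because $\Phi$ maps tangent directions to tangent directions and $\mathbf{n}$ to a vector with $e_{n}$--component bounded below. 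Working at a small scale $\bar r\le r_{0}$, and after an affine normalisation and a parabolic dilation by $\bar r$ so that the flattened configuration around $(0,t_{0})$ contains $Q_{1}^{+}$, the oscillation function of the rescaled operator is estimated by $\psi_{F}$ pulled back through $\Phi$ plus a term of size $\le C\cdot(\textrm{current scale})$ coming from the Lipschitz dependence of $D\Phi$; hence choosing $\bar r$ and $\epsilon_{0}$ small (in terms of the smallness threshold of Corollary~\ref{paob_flat} and of $\|\Phi\|_{C^{3}}$) makes the averaged $\psi$--smallness hypothesis of Corollary~\ref{paob_flat} hold at all admissible sub--scales. Applying Corollary~\ref{paob_flat} and scaling back then gives a bound for $\|u\|_{W^{2,p}}$ on a fixed--size half--neighbourhood of $(x_{0},t_{0})$ in $\Omega\times(0,T]$ by $\|u\|_{L^{\infty}(\Omega\times(0,T))}+\|f\|_{L^{p}(\Omega\times(0,T))}$.

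For an interior point $(x_{0},t_{0})\in\Omega\times(0,T]$ with $\dist(x_{0},\partial\Omega)$ bounded below, I would instead invoke the interior $W^{2,p}$--estimate for viscosity solutions of fully nonlinear parabolic equations satisfying the structure condition \eqref{paob_sc} with small oscillation (cf.\ \cite[Theorem 5.6]{MR1135923}, \cite[Lemma 7.4]{MR1789919}), which yields the analogous local bound on an interior sub--cylinder. The parabolic boundary contributions at $t=0$ and $t=T$ are handled exactly as at the end of the proof of Theorem~\ref{paob_kthm}: using $u(\cdot,0)=0$ together with $F(0,0,0,\cdot)=0$ one extends $u,f,F,\beta$ across $\{t=0\}$ (and suitably past $t=T$) so that the interior and boundary local estimates also apply in the initial and terminal layers.

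Finally, since $\overline{\Omega}\times[0,T]$ is compact it is covered by finitely many of the above half--neighbourhoods and interior cylinders, the number of which is controlled by $\diam(\Omega)$, the $C^{3}$--character of $\partial\Omega$, $r_{0}$ and $T$; summing the $p$--th powers of the local estimates produces the asserted global $W^{2,p}$--estimate, with the stated dependence of the constant. I expect the main obstacle to be the bookkeeping in the flattening step, namely verifying that \emph{every} hypothesis of Corollary~\ref{paob_flat} survives the $C^{3}$ change of variables with constants depending only on the permitted data --- in particular that the transported field $\tilde\beta$ stays uniformly oblique and of class $C^{2}$, and that the averaged smallness of $\psi_{\tilde F}$ is still attainable even though the new $x$--dependence introduced by $D^{2}\Phi$ is merely continuous (not a priori small), which forces one to pass to a sufficiently small scale; once this is settled, the covering and summation are routine.
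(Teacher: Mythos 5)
Your proof follows the same strategy as the paper's: flatten the boundary with a $C^{3}$ diffeomorphism, verify that the transformed operator, boundary field, and data satisfy the hypotheses of Corollary~\ref{paob_flat} (with the interior handled by Wang's interior $W^{2,p}$-estimate), then cover and sum. If anything you are slightly more careful than the paper in flagging that the new $x$-dependence introduced by $D^{2}\Phi$ must be absorbed by working at a small enough scale; otherwise the argument matches the paper's.
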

\begin{proof}
First, we get the following interior $W^{2,p}$-estimate
$$ ||u||_{ W^{2,p}(Q)} \le C (||u||_{ L^{\infty}(Q )}+||f||_{ L^{p}( Q )}) $$ for any $Q \subset \subset \Omega \times (0,T)$
from \cite[Theorem 5.9]{MR1135923}.
Thus, it is sufficient to consider the boundary case.

We are going to use a flattening argument in order to get a boundary estimate.  
For any $x_{0} \in \partial \Omega$, we can find a neighborhood $N(x_{0}) $ of $x_{0}$ and a $ C^{3}$-diffeomorphism $ \Psi : U(x_{0}) \to B_{1}^{+} $
with $ \Psi(x_{0}) = 0$ since $ \partial \Omega $ is $ C^{3} $. 
Then for each $t_{0} \in (0,T]$, we define $\Psi_{t_{0}} : U(x_{0}) \times (t_{0}-1, t_{0}) \to Q_{1}^{+}  $ such that
$$\Psi_{t_{0}}(x,t) = (\Psi (x), t-t_{0}). $$

Fix $t_{0} \in (0,T]$ and let  $ \tilde{u} =  u \circ \Psi_{t_{0}}^{-1}$. 
Then $\tilde{u} $ is a solution of  
 \begin{align*} 
\left\{ \begin{array}{ll}
\tilde{F}(D^{2}\tilde{u},D\tilde{u},\tilde{u},x,t) - \tilde{u}_{t}=\tilde{f} & \textrm{in $Q_{1}^{+}$,}\\
\tilde{\beta} \cdot D\tilde{u} = 0 & \textrm{on $Q_{1}^{\ast}$}\\
\end{array} \right. 
\end{align*}
in the viscosity sense,
where $ \tilde{f} = f \circ \Psi_{t_{0}}^{-1}$, 
$\tilde{\beta} = ( \beta \circ \Psi_{t_{0}}^{-1}) \cdot (D\Psi_{t_{0}} \circ \Psi_{t_{0}}^{-1})^{t}$ and 
\begin{align*}
\tilde{F}(D^{2}\tilde{\varphi},&D\tilde{\varphi},\tilde{u},x,t) 
=F (D^{2}\varphi, D\varphi, u ,x,t) \circ \Psi_{t_{0}}^{-1} \\ &
= F(D \Psi_{t_{0}}^{T} \circ \Psi_{t_{0}}^{-1} D^{2} \tilde{\varphi} D\Psi_{t_{0}} \circ \Psi_{t_{0}}^{-1} + (D \tilde{\varphi}  \partial_{i,j} \Psi_{t_{0}} \circ \Psi_{t_{0}}^{-1})_{1\le i,j \le n} , \\ & \qquad  \qquad
D\varphi D \Psi_{t_{0}} \circ \Psi_{t_{0}}^{-1}, \tilde{u} , \Psi_{t_{0}}^{-1}(x,t))
\end{align*}
for $ \tilde{\varphi} \in W^{2,p}(Q_{1}^{+} )$ and $ \varphi = \tilde{\varphi} \circ \Psi_{t_{0}}   \in W^{2,p}(U(x_{0})\times (t_{0}-1,t_{0})) $.
Here, we also note that we extended $u$ by zero when $t<0$.

Now we can see that there exists a uniform constant $C(\Psi)$ such that
$$ \psi_{\tilde{F}}((x,t),(x_{0},t_{0})) \le C(\Psi) \psi_{F}((\Psi^{-1}(x,t)),(\Psi^{-1}(x_{0},t_{0})))$$ and $\tilde{F}$ is uniformly elliptic with constants $\lambda  C(\Psi)$, $ \Lambda  C(\Psi)$, see \cite{MR2486925}.
In addition, we also have $ \tilde{\beta}= ( \beta \circ \Psi_{t_{0}}^{-1}) \cdot (D\Psi_{t_{0}} \circ \Psi_{t_{0}}^{-1})^{t} \in C^{2}$ since $\Psi, \Psi^{-1} \in C^{3}$ and $\beta\in C^{2}(\partial \Omega \times (0,T)) $.
Therefore, we can obtain  the boundary estimate, thanks to Corollary \ref{paob_flat} along with a standard covering argument.
This completes the proof.
\end{proof}

\end{document}